\let\frak\mathfrak
\let\Bbb\mathbb
\def\>{\relax\ifmmode\mskip.666667\thinmuskip\relax\else\kern.111111em\fi}
\def\<{\relax\ifmmode\mskip-.333333\thinmuskip\relax\else\kern-.0555556em\fi}
\def\vsk#1>{\vskip#1\baselineskip}
\def\vv#1>{\vadjust{\vsk#1>}\ignorespaces}
\def\vvn#1>{\vadjust{\nobreak\vsk#1>\nobreak}\ignorespaces}
  \let\ssize\scriptstyle
\let\sssize\scriptscriptstyle
\let\Medskip\medskip
\def\medskip{\par\Medskip}
\let\Bigskip\bigskip
\def\bigskip{\par\Bigskip}
\let\Maketitle\maketitle
\def\maketitle{\Maketitle\thispagestyle{empty}\let\maketitle\empty}
\newtheorem{thm}{Theorem}[section]
\newtheorem{cor}[thm]{Corollary}
\newtheorem{lem}[thm]{Lemma}
\numberwithin{equation}{section}
\theoremstyle{definition}
\newtheorem*{rem}{Remark}
\let\mc\mathcal
\let\nc\newcommand
\let\al\alpha
\let\bt\beta
\let\la\lambda
\let\phi\varphi
\let\Si\Sigma
\let\om\omega
\let\der\partial
\let\geq\geqslant
\let\on\operatorname
\let\bi\bibitem
\let\bs\boldsymbol
\def\C{{\mathbb C}}
\def\Z{{\mathbb Z}}
\def\F{{\mc F}}
\def\+#1{^{\{#1\}}}
\def\End{\on{End}}
\def\Res{{\on{Res}}}
\def\gln{\mathfrak{gl}_N}
\def\sln{\mathfrak{sl}_N}
\def\beq{\begin{equation}}
\def\eeq{\end{equation}}
\def\be{\begin{equation*}}
\def\ee{\end{equation*}}
\nc{\bea}{\begin{eqnarray*}}
\nc{\eea}{\end{eqnarray*}}
\nc{\bean}{\begin{eqnarray}}
\nc{\eean}{\end{eqnarray}}
\nc{\Ref}[1]{{\rm(\ref{#1})}}
\let\ga\gamma
\nc{\Il}{{\mc I_{\bs\la}}}
\nc{\bla}{{\bs\la}}
\nc{\Fla}{\F_\bla}
\nc{\tfl}{{T^*\Fla}}
\nc{\GL}{{GL_n(\C)}}
\nc{\GLC}{{GL_n(\C)\times\C^*}}
\let\sd s 
\def\ddk_#1{\kk_{#1}\<\>\frac\der{\der\<\>\kk_{#1}}}
\def\bul{\mathbin{\raise.2ex\hbox{$\sssize\bullet$}}}
\def\intt{\mathchoice
{\mathop{\raise.2ex\rlap{$\,\,\ssize\backslash$}{\intop}}\nolimits}
{\mathop{\raise.3ex\rlap{$\,\sssize\backslash$}{\intop}}\nolimits}
{\mathop{\raise.1ex\rlap{$\sssize\>\backslash$}{\intop}}\nolimits}
{\mathop{\rlap{$\sssize\<\>\backslash$}{\intop}}\nolimits}}
\let\kk q 
\let\cc c
\let\Ko K
\def\GZ/{Gelfand-Zetlin}
\def\KZ/{{\slshape KZ\/}}
\def\qKZ/{{\slshape qKZ\/}}
\def\XXX/{{\slshape XXX\/}}
\nc{\slnl}{{\sln (\lambda)}}
\nc{\PCN}{{   (\C[x])^N   }}
\nc{\di}{\on{Diag}}
\nc{\dio}{\on{Diag}_0}
\nc{\Mm}{{\mc M}}
\nc{\Nn}{{\mc N}}
\nc{\A}{{\mc C}}
\nc{\PCr}{{  P  (\C[x])^n   }}
\nc{\Pk}{{(\bs{P}^1)^k}}
\nc{\N}{{\Bbb N}}
\nc{\Ll}{{\mc L}}
\nc{\ord}{{\on{ord}\,}}
\newcommand{\OS}{\mathcal {A}}
\def\FF{{\mathcal F}}
\nc{\Sing}{{\on{Sing}\,}}
\nc{\sing}{{\on{Sing}\,}}
\nc{\Hess}{{\on{Hess}}}
\nc{\R}{{\Bbb R}}
\newcommand{\Pee}{{\mathbb P}}
\let\on\operatorname
\nc{\Kk}{{\bs K}}
\nc{\Ap}{{A_\Phi(z)}}
\nc{\ap}{{A_\Phi(z)}}
\nc{\sv}{{\on{Sing}_a V}}
\nc{\cd}{{\C^n-\Delta}}
\nc{\UT}{{U^0}}   
\nc{\Spect}{\on{Spec}\nolimits}
\nc\z{{\bs z}}
\nc\p{{\bs p}}
\nc\q{{\bs q}}
\nc\ttt{{\bs t}}
\nc\OC{{\mc O(C_{\A,a})}}
\nc\Ia{{\mc I}}
\nc\OCx {{\mc O(C_{\A(x),a})}}
\nc\Cs{{(\C^n)^*}}
\nc\OLx{{ \mc O(L_{Y,a}(x))}}
\begin{document}

\hrule width0pt
\vsk->

\title[Critical set of the master function ]
{Critical set of the master function and characteristic variety of the associated Gauss-Manin differential equations
}

\author
[ A.\,Varchenko]
{ A.\,Varchenko$\>^\diamond$}

\maketitle

\begin{center}

\vsk.5>
{\it Department of Mathematics, University
of North Carolina at Chapel Hill\\ Chapel Hill, NC 27599-3250, USA\/}

\end{center}

{\let\thefootnote\relax
\footnotetext{\vsk-.8>\noindent
$^\diamond\<${\sl E\>-mail}: \enspace anv@email.unc.edu\>,
supported in part by NSF grant DMS-1362924}}

\medskip

\begin{abstract}

We consider a weighted family of $n$ parallelly transported hyperplanes in a $k$-dimensional
 affine space  and describe  the characteristic variety of the Gauss-Manin
differential equations for associated hypergeometric integrals.
The characteristic variety is given as the zero set of Laurent polynomials, whose coefficients are determined by weights and
 the associated point in the Grassmannian Gr$(k,n)$.  The Laurent polynomials are in involution.
These statements generalize  \cite{V7}, where
such a description was obtained for a weighted {\it generic} family of  parallelly transported hyperplanes.

An intermediate object between the differential equations and the characteristic variety is
the algebra of functions on the critical set of the associated master function.
We construct a linear isomorphism between the vector space of the Gauss-Manin
differential equations and the algebra of functions.
The isomorphism allows us to describe the characteristic variety.
It also allowed us to  define an integral structure on the vector space of the algebra
and  the associated (combinatorial) connection on the family of such algebras.

\end{abstract}

{\small \tableofcontents  }

\setcounter{footnote}{0}
\renewcommand{\thefootnote}{\arabic{footnote}}

\section{Introduction}

There are three places, where a flat connection depending on a parameter appears:

\noindent
$\bullet$\  KZ equations,
$\kappa \frac{\der I}{\der z_i} = K_i I$,   $i=1,\dots,n$.
Here $\kappa$ is a parameter, $I(z_1,\dots,z_n)$  a $V$-valued function, where  $V$ is a vector space from representation theory,
$K_i : V\to V$ are linear operators, depending on $z$. The connection is flat for all $\kappa$, see, for example, \cite{EFK, V2}.

\noindent
$\bullet$\ Quantum differential equations,
$\kappa \frac{\der I}{\der z_i} = p_i *_z I$,   $i=1,\dots,n.$
Here $p_1,\dots,p_n$ are generators of some commutative algebra $H$ with quantum multiplication $*_z$ depending on $z$.
The connection is flat for all $\kappa$.
These equations are a part of the Frobenius structure on the quantum cohomology of a variety, see \cite{D, M}.

\noindent
$\bullet$\ Differential equations for hypergeometric integrals associated with a family of weighted arrangements with parallelly
 transported hyperplanes,
$\kappa \frac{\der I}{\der z_i} = K_i I$,   $ i=1,\dots,n$. The connection is flat for all $\kappa$,
see, for example, \cite{V1, OT2}.

\smallskip
If $\kappa \frac{\der I}{\der z_i} = K_i I$, $i=1,\dots,n$, is a system of $V$-valued differential equations of one of these types, then
its characteristic variety is
\be
\Spect = \{(q,p)\in T^*\C^n\ |\ \exists
v\in V-\{0\}\ \text{with}\ K_j(q)v = p_jv,\ j=1,\dots,n\}.
\ee
It is known that the characteristic varieties of the first two types of differential equation are interesting objects. For example,
the characteristic variety of the quantum differential equation of the flag variety is the zero set of the Hamiltonians of the classical Toda lattice,
see \cite{G, GK}, and the characteristic variety of the $\gln$  KZ equations with values in the tensor power of the vector representation  is the zero
set of the Hamiltonians of the classical Calogero-Moser system, see \cite{MTV2}.

In this paper we describe the characteristic variety of the Gauss-Manin differential equations for hypergeometric integrals associated with
an {\it arbitrary}  weighted family of $n$ parallelly transported hyperplanes in $\C^k$. This description generalizes \cite[Corollary 4.2]{V7}, where
such a description was obtained for a weighted {\it generic} family of  parallelly transported hyperplanes.

The characteristic variety is given as the zero set of Laurent polynomials, whose coefficients are determined by weights and
 the associated point in the Grassmannian Gr$(k,n)$.
The Laurent polynomials are in involution, see Section \ref{sec def of Lag}.

It is known that the KZ differential equations, as well as some  quantum differential equations,
 can be identified with certain symmetric parts of the Gauss-Manin differential equations of weighted families of
parallelly transported hyperplanes, see \cite{SV, TV}.
Therefore, the results of this paper on the characteristic variety  is  a  step to studying characteristic varieties of more general KZ and quantum
 differential equations, which admit integral hypergeometric representations.

Our description of the characteristic variety is based on the fact \cite{V4},
that the characteristic variety of the Gauss-Manin differential equations is generated by the master function
of the corresponding hypergeometric integrals, that is, the characteristic variety coincides with the Lagrangian variety
of the master function. That fact was  developed later in \cite[Theorem 5.5]{MTV1}, it was proved there with the help of the Bethe ansatz, that
the local algebra of a critical point of the master function associated with a $\gln$ KZ equation can be identified with
a suitable local Bethe algebra of the corresponding $\gln$ module.

\vsk.3>
In Section \ref{Sec Arrangements}, we consider a weighted arrangement $(\A,a)$ of $n$ affine hyperplanes in $\C^k$. Here $a$ is a point of $(\C^\times)^n$
called the weight.    We introduce the Aomoto complex
$(\OS(\A),d^{(a)})$, the flag complex $(\FF(\A),d)$, the critical set $C_{\A,a}$ of the master function on the complement to the arrangement.
 We remind the isomorphism of vector spaces $\mc E : \OC \to H^k(\FF(\A),d)$,  constructed
in \cite{V5} and given by a variant of  the Grothendieck residue. The algebra $\OC$ is a nonlinear object, defined by the critical point equations;
the space $H^k(\FF(\A),d)$ is a combinatorial object defined by the matroid of the arrangement  $\A$; the isomorphism $\mc E $ is given by a $k$-dimensional integral.
Our first main result, Theorem \ref{1st main thm},  gives an  elementary isomorphism $[\mc S^{(a)}] :  H^k(\FF(\A),d) \to \OC$
such that $[\mc S^{(a)}]\circ \mc E = (-1)^k$.  Theorem \ref{1st main thm} allows us to bring to $\OC$ the combinatorial structures on $H^k(\FF(\A),d)$.
In particular, we construct a set $\{w_{\al_0,\dots,\al_k}\}$ of {\it marked} elements  of $\OC$, labeled by flags of edges  of $\A$,
which  spans  the vector space  $\OC$. We give the linear relations between the marked elements; the relations are with integer coefficients
and depend only on the matroid of $\A$, see
\linebreak
Corollary \ref{marked span}.

In Section \ref{sec par trans},  we consider a family of weighted arrangements $(\A(x),a)$  of $n$ affine hyperplanes in $\C^k$, parameterized by
$x\in\C^n$. The hyperplanes of $(\A(x),a)$  are transported parallelly as $x$ changes. Each of the arrangements
has  the algebra $\OCx$  of functions on the critical set $C_{\A(x),a}$ of the associated master function.
We define the discriminant $\Delta\subset\C^n$ so that the combinatorics of $\A(x)$ does not change when $x$ runs through $\cd$.
The constructions of Section \ref{Sec Arrangements} provide us with the vector bundle of algebras
$\sqcup_{x\in\C^n-\Delta} \OCx \to \C^n-\Delta$, whose fibers are canonically identified with the help of the marked elements.
The multiplication in $\OCx$ depends on $x$. The isomorphism $H^k(\FF(\A(x)),d) \to \OCx$ of Section \ref{Sec Arrangements}
 allows us to describe the multiplication in $\OCx$ combinatorially, see Corollary \ref{cor *_x K_j}.
We describe the Gauss-Manin differential equations associated with the weighted family of arrangements as a system of differential equations
on the bundle of algebras.

In Section  \ref{sec def of Lag}, for given $k$-dimensional
vector subspace $Y\subset \C^n$ and weight $a\in (\C^\times)^n$,
we define a Lagrangian variety $L_{Y,a}\subset \C^n\times \Cs$.  We consider the projection
$\pi_{L_{Y,a}}:L_{Y,a}\to\C^n$ and the algebras of functions $\OLx$ on fibers of the projection.
We describe $L_{Y,a}$ as the zero set of Laurent polynomials in involution.

In  Section \ref{An arrangement second part},
for the family of arrangements $\A(x)$ considered in Section \ref{sec par trans},
we define a $k$-dimensional subspace $Y\subset \C^n$  and construct an isomorphism
$\Psi_{\A(x),a}^*\  :\  \OLx \to \OCx$ of algebras for any $x\in \C^n$. Theorem  \ref{thm 1}, on this isomorphism,
is our second main result. We discuss corollaries of Theorem \ref{thm 1} in Sections \ref{H J}-\ref{Real solutions}.
In particular, in Corollary \ref{cor hess jac} we describe the ratio of the Hessian element in $\OCx$ and the Jacobian element in
$\OLx$ and in Corollary \ref{res res} we identify the standard residue form on $\OCx$, defined by a $k$-dimensional integral,
 with a residue form on  $\OLx$, defined by an $n$-dimensional integral.
In Section \ref{cor 4.5}, we consider the vector bundle of algebras
$\sqcup_{x\in\C^n-\Delta} \OLx \to \C^n-\Delta$. The isomorphism $\Psi_{\A(x),a}^*$ allows us to identify  the fibers of the bundle
and  describe the Gauss-Manin differential equations with values in that bundle. They have the form
$\kappa \frac{\der I}{\der q_j}(x) = [p_j] *_x I$, $j=1,\dots,n$, where $q_1,\dots,q_n$ are coordinates on $\C^n$, $p_1,\dots,p_n$ are the dual coordinates
on $\Cs$,  $[p_j] *_x$ is the multiplication by $p_j$ in $\OLx$.

In Section \ref{Real solutions}, we observe a rather unexpected 'reality' property  of the Lagrangian variety $L_{Y,a}$, which is similar to the reality
property of Schubert calculus,  see \cite{MTV2, MTV3, So}.

In Theorem \ref{char var thm}, we identify the characteristic variety of the Gauss-Manin differential equations associated with the family of arrangements
considered in Section \ref{sec par trans} and the Lagrangian variety $L_{Y,a}$ defined in  Section \ref{An arrangement second part}.


\medskip
The author thanks V.\,Tarasov for collaboration. The proof of Theorem \ref{1st main thm} is the result of joint efforts.
The author thanks B.\,Dubrovin and A.\,Veselov for helpful discussions.

\section{Arrangements}
\label{Sec Arrangements}

\subsection{Affine arrangement}
\label{An affine arrangement}

Let $k,n$ be positive integers, $k<n$. Denote $J=\{1,\dots,n\}$.

Consider the complex affine space $\C^k$ with coordinates $t_1,\dots,t_k$.
Let $\A =(H_j)_{j\in J}$,  be an arrangement of $n$ affine hyperplanes in
$\C^k$. Denote $U(\A) = \C^k - \cup_{j\in J} H_j$,
the complement.
An {\it edge} $X_\al \subset \C^k$ of $\A$ is a nonempty intersection of some
hyperplanes  of $\A$. Denote by
 $J_\al \subset J$ the subset of indices of all hyperplanes containing $X_\al$.
Denote $l_\al = \mathrm{codim}_{\C^k} X_\al$.

We assume that $\A$ is {\it essential}, that is, $\A$ has a vertex, an edge which is a point.

An edge is called {\it dense} if the subarrangement of all hyperplanes containing
it is irreducible: the hyperplanes cannot be partitioned into nonempty
sets so that, after a change of coordinates, hyperplanes in different
sets are in different coordinates. In particular, each hyperplane of
$\A$ is a dense edge.

\subsection{Orlik-Solomon algebra}
Define complex vector spaces $\OS^p(\A)$, $p = 0,  \dots, k$.
 For $p=0$, we set $\OS^p(\A)=\C$. For  $p \geq 1$,\
 $\OS^p(\A)$   is generated by symbols
$(H_{j_1},...,H_{j_p})$ with ${j_i}\in J$, such that
\begin{enumerate}
\item[(i)] $(H_{j_1},...,H_{j_p})=0$
if $H_{j_1}$,...,$H_{j_p}$ are not in general position, that is, if the
intersection $H_{j_1}\cap ... \cap H_{j_p}$ is empty or
 has codimension
 less than $p$;
\item[(ii)]
$ (H_{j_{\sigma(1)}},...,H_{j_{\sigma(p)}})=(-1)^{|\sigma|}
(H_{j_1},...,H_{j_p})
$
for any element $\sigma$ of the
symmetric group $ \Sigma_p$;
\item[(iii)]
$\sum_{i=1}^{p+1}(-1)^i (H_{j_1},...,\widehat{H}_{j_i},...,H_{j_{p+1}}) = 0
$
for any $(p+1)$-tuple $H_{j_1},...,H_{j_{p+1}}$ of hyperplanes
in $\A$ which are
not in general position and such that $H_{j_1}\cap...\cap H_{j_{p+1}}\not = \emptyset$.
\end{enumerate}
The direct sum $\OS(\A) = \oplus_{p=1}^{N}\OS^p(\A)$ is the {\it Orlik-Solomon}
 algebra with respect to  multiplication
 $ (H_{j_1},...,H_{j_p})\cdot(H_{j_{p+1}},...,H_{j_{p+q}}) =
 (H_{j_1},...,H_{j_p},H_{j_{p+1}},...,H_{j_{p+q}})$.

\subsection{Aomoto complex }
\label{sec weights}
 Fix a point $a=(a_1,\dots,a_n)\in (\C^\times)^n$ called the {\it weight}.
Then the arrangement $\A$ is {\it weighted}:\  for $j\in J$, we assign weight $a_j$ to hyperplane $H_j$.
For an edge $X_\al$, define its weight
$a_\al = \sum_{j\in J_\al}a_j$.  We define  $\om^{(a)} = \sum_{j\in J} a_j\cdot (H_j)\ \in  \OS^1(\A)$.
 Multiplication by $\om^{(a)}$ defines the differential
 $ d^{(a)} :   \OS^p(\A) \to \OS^{p+1}(\A)$,  $x  \mapsto \om^{(a)}\cdot x$,
   on  $\OS(\A)$, \ $(d^{(a)})^2=0$. The complex $(\OS(\A), d^{(a)})$ is called the {\it Aomoto complex}.

\subsection{Flag complex, see \cite{SV} }

 For an edge $X_\alpha$, \ $l_\alpha=p$, a {\it flag} starting at $X_\alpha$ is a sequence
$ X_{\alpha_0}\supset
X_{\alpha_1} \supset \dots \supset X_{\alpha_p} = X_\alpha $
of edges such that  $ l_{\alpha_j} = j$ for $j = 0, \dots , p$.
 For an edge $X_\alpha$,
 we define $(\overline{\FF}_{\alpha})_\Z$  as  the free $\Z$-module generated by the elements
$\overline{F}_{{\alpha_0},\dots,{\alpha_p}=\alpha}$
 la\-bel\-ed by the elements of
the set of all flags  starting at $X_\alpha$.
We define  $(\FF_{\alpha})_\Z$ as the quotient of
$(\overline{\FF}_{\alpha})_\Z$ by the submodule generated by all
the elements of the form
\bean
\label{relations F}
{\sum}_{X_\beta,
X_{\alpha_{j-1}}\supset X_\beta\supset X_{\alpha_{j+1}}}\!\!\!
\overline {F}_{{\alpha_0},\dots,
{\alpha_{j-1}},{\beta},{\alpha_{j+1}},\dots,{\alpha_p}=\alpha}\, .
\eean
Such an element is determined  by  $j \in \{ 1, \dots , p-1\}$ and
an incomplete flag $X_{\alpha_0}\supset...\supset
X_{\alpha_{j-1}} \supset X_{\alpha_{j+1}}\supset...\supset
X_{\alpha_p} = X_\alpha$ with $l_{\alpha_i}$ $=$ $i$.

We denote by ${F}_{{\alpha_0},\dots,{\alpha_p}}$ the image in $(\FF_\alpha)_\Z$ of the element
$\overline{F}_{{\alpha_0},\dots,{\alpha_p}}$.  For $p=0,\dots,k$, we set
$({\FF}^p(\A))_\Z = \oplus_{X_\alpha,\, l_\alpha=p}\, ({\FF}_{\alpha})_\Z$,\
${\FF}^p(\A) = ({\FF}^p(\A))_\Z\otimes\C$,\
$\FF(\A) = \oplus_{p=1}^{N}\FF^p(\A)$.
We define the differential $d_\Z : (\FF^p(\A))_\Z \to (\FF^{p+1}(\A))_\Z$ by
\bean
\label{d in F}
d_\Z\  :\   {F}_{{\alpha_0},\dots,{\alpha_p}} \ \mapsto \
{\sum}_{X_{\beta},
X_{\alpha_{p}}\supset X_{\beta}}
 {F}_{{\alpha_0},\dots,
{\alpha_{p}},{\beta}} ,
\eean
$d^2_\Z=0$. Tensoring $d_\Z$  with $\C$, we obtain the differential $d : \FF^p(\A) \to \FF^{p+1}(\A)$.
In particular, we have
\bean
\label{z vs C}
H^p(\FF(\A),d) = H^p((\FF(\A))_\Z,d_\Z)\otimes \C .
\eean

\begin{thm}
[{\cite[Corollary 2.8]{SV}}]
\label{them cooh F}

We have  $H^p(\FF(\A),d)=0$ for $p\ne k$ and
\linebreak
$\dim H^k(\FF(\A),d)=|\chi(U(\A))|$, where $\chi(U(\A))$ is the Euler characteristic  of the complement $U(\A)$.
\qed
\end{thm}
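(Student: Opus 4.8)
The plan is to prove the statement by relating the flag complex $(\FF(\A),d)$ to the Orlik-Solomon algebra $\OS(\A)$ equipped with a suitable differential, and then invoking the classical Orlik-Solomon theory computing the cohomology of the complement. First I would establish a duality: there is a natural pairing between $\FF^p(\A)$ and $\OS^p(\A)$ obtained by sending a flag $F_{\alpha_0,\dots,\alpha_p}$ to a functional on the Orlik-Solomon generators, where the value on $(H_{j_1},\dots,H_{j_p})$ is determined by whether the chosen hyperplanes are compatible with the descending flag of edges (nonzero only when $X_{\alpha_i}=H_{j_1}\cap\dots\cap H_{j_i}$, up to the sign coming from reordering). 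The relations \eqref{relations F} defining $(\FF_\alpha)_\Z$ are precisely dual to the Orlik-Solomon relation (iii), so this pairing descends to a well-defined perfect pairing on each graded piece.

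The key step is to check that under this duality the flag differential $d$ from \eqref{d in F} is the adjoint of a differential on $\OS(\A)$, and to identify that adjoint with multiplication by a distinguished element in $\OS^1(\A)$. Concretely, $d$ extends the flag downward by one edge, which on the dual side corresponds to wedging on a new hyperplane; summing over all edges $X_\beta$ with $X_{\alpha_p}\supset X_\beta$ matches the action of multiplying by $\sum_{j\in J}(H_j)$. Thus the flag complex $(\FF(\A),d)$ is identified (up to sign conventions and dualization) with the Orlik-Solomon complex whose differential is multiplication by the element $\sum_j (H_j)$, which is the Aomoto complex $(\OS(\A),d^{(a)})$ in the special case of generic weights, or more robustly with the Aomoto complex for any weight since the cohomology dimension is weight-independent for generic $a$.

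Having made this identification, I would invoke the fundamental result of Orlik-Solomon theory: the complement $U(\A)$ is a $K(\pi,1)$-rational cohomology model is the Orlik-Solomon algebra, and for a generic weight the cohomology of the Aomoto complex is concentrated in the top degree $k$ (this uses essentiality of $\A$, which guarantees the top degree is $k$) with dimension equal to $|\chi(U(\A))|$. The vanishing in degrees $p\ne k$ and the dimension count both transfer through the duality to $(\FF(\A),d)$. Finally, since the integral statement \eqref{z vs C} says $H^p(\FF(\A),d)=H^p((\FF(\A))_\Z,d_\Z)\otimes\C$, the real content is the computation over $\C$, and the integral version follows by universal coefficients once one checks the relevant integral homology is free (which holds because the Orlik-Solomon algebra is a free $\Z$-module with the no-broken-circuit basis).

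The hard part will be making the duality between flags and Orlik-Solomon generators precise enough that both the defining relations and the differentials are matched on the nose, including all signs; this is exactly the combinatorial bookkeeping carried out in \cite{SV}. A secondary subtlety is that the stated theorem is about the flag complex with its own intrinsic differential rather than a weighted one, so I would need to confirm that the cohomology being computed is genuinely weight-independent (reflecting the topology of the complement) rather than accidentally tied to a particular choice of $a$; essentiality of $\A$ is the hypothesis that makes the top-degree concentration hold and pins the Euler characteristic sign via $(-1)^k\chi(U(\A))=|\chi(U(\A))|$.
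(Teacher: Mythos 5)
The paper itself does not prove this statement; it imports it from \cite[Corollary 2.8]{SV}, where the proof is a direct combinatorial one about the flag complex. Your route instead tries to reduce it to the Aomoto complex, and its key step contains a genuine error. The duality of the paper pairs $\OS^p(\A)$ with $\FF^p(\A)$ in the \emph{same} degree $p$. Consequently, the transpose of the flag differential $d:\FF^p(\A)\to\FF^{p+1}(\A)$ under this pairing is a map $\OS^{p+1}(\A)\to\OS^p(\A)$, i.e.\ a degree-\emph{lowering} contraction (deletion of hyperplanes); it can never be identified with multiplication by $\sum_{j\in J}(H_j)$, which raises degree. Equivalently, if $d$ is the adjoint of some operator $D$ on $\OS(\A)$, then $D$ lowers degree, so it cannot be multiplication by an element of $\OS^1(\A)$. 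Thus the duality pairing identifies $(\FF(\A),d)$ with the dual of a chain complex on $\OS(\A)$, not with the Aomoto complex. The map that genuinely intertwines $d$ with $d^{(a)}$ is not the duality pairing but the contravariant map $\mc S^{(a)}$ of Lemma \ref{lem hom} (take, e.g., $a=(1,\dots,1)$, so every dense edge has positive weight); and for $\mc S^{(a)}$ to be an isomorphism of complexes you need Theorem \ref{thm Shap nondeg} (\cite[Theorem 3.7]{SV}), the nondegeneracy of the contravariant form. That is a substantial theorem which your proposal silently bypasses by conflating the duality pairing with the contravariant map.

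There is a second, logical, problem: circularity. Even with the bridge repaired, you invoke the concentration of the Aomoto cohomology in degree $k$ with dimension $|\chi(U(\A))|$. But in this paper — and in \cite{SV} — that statement (Theorem \ref{thm Y, O ,SV}) is \emph{deduced from} the flag-complex statement you are proving, together with Lemma \ref{lem hom} and Theorem \ref{thm Shap nondeg}. To avoid a circle you must cite a proof of the Aomoto concentration that is independent of the flag complex, e.g.\ Yuzvinsky \cite{Y} or Orlik--Terao \cite{OT2}. With those two repairs — contravariant map with unit weights as the bridge, an independent source for the Aomoto computation, and then the Euler-characteristic count $\sum_p(-1)^p\dim\FF^p(\A)=\sum_p(-1)^p b_p(U(\A))=\chi(U(\A))$ to pin the dimension of $H^k$ — your outline would become a valid alternative to the direct combinatorial proof in \cite{SV}. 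As written, however, the central identification fails on degree grounds and the indispensable nondegeneracy input is missing.
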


\subsection{Euler characteristic of $U(\A)$}

A formula for the Euler characteristic $\chi(U(\A))$
in terms of the matroid associated with $\A$ is given in
\cite[Proposition 2.3]{STV}. The condition
$\chi(U(\A))\ne 0$ is discussed in \cite[Theorem 2]{C}, cited as Theorem 2.4 in \cite{STV}.
On the equality of the absolute value $|\chi(U(\A))|$ and the number of bounded components
of the real part of $U(\A)$ see, for example, \cite{V2}.

\subsection{Duality}
The vector spaces $\OS^p(\A)$ and $\FF^p(\A)$ are dual,  see \cite{SV}.
The pairing $ \OS^p(\A)\otimes\FF^p(\A) \to \C$ is defined as follows.
{}For $H_{j_1},...,H_{j_p}$ in general position, set
$F(H_{j_1},...,H_{j_p})=F_{{\alpha_0},\dots,{\alpha_p}}$, where
$X_{\alpha_0}=\C^k$, $X_{\alpha_1}=H_{j_1}$, \dots ,
$X_{\alpha_p}=H_{j_1} \cap \dots \cap H_{j_p}$.
Then we define $\langle (H_{j_1},...,H_{j_p}), F_{{\alpha_0},\dots,{\alpha_p}}
 \rangle = (-1)^{|\sigma|},$
if $F_{{\alpha_0},\dots,{\alpha_p}}
= F(H_{j_{\sigma(1)}},...,H_{j_{\sigma(p)}})$ for some $\sigma \in S_p$,
and $\langle (H_{j_1},...,H_{j_p}), F_{{\alpha_0},\dots,{\alpha_p}} \rangle = 0$ otherwise.

An element $F \in \FF^k(\A)$ is called  {\it singular}  if
$F$ annihilates the image of the map
$d^{(a)} :   \OS^{k-1}(\A) \to \OS^{k}(\A)$,  see \cite{V4}.
Denote by
$\on{Sing}_a\FF^k(\A) \subset \FF^k(\A)$  the subspace of all singular vectors.

\subsection{Contravariant map and form, see \cite{SV}}
 The weights  $a$ determines the {\it contravariant  map}
 \bean
 \label{S map}
  \mathcal S^{(a)} : \FF^p(\A) \to \OS^p(\A),
  \quad
  {F}_{{\alpha_0},\dots,{\alpha_p}} \mapsto
\sum  a_{j_1} \cdots a_{j_p} (H_{j_1}, \dots , H_{j_p})\,,
\eean
 where the sum is taken over all $p$-tuples $(H_{j_1},...,H_{j_p})$ such that
$H_{j_1} \supset X_{\al_1}$,\dots , $ H_{j_p}\supset X_{\alpha_p}$.
Identifying $\OS^p(\A)$ with $\FF^p(\A)^*$, we consider
the map  as a bilinear form,
$S^{(a)} : \FF^p(\A) \otimes \FF^p(\A) \to \C$.
The bilinear form is
called the {\it contravariant form}.
The contravariant form  is symmetric.
For $F_1, F_2 \in \FF^p(\A)$,
\bea
S^{(a)}(F_1,F_2) =
{\sum}_{\{j_1, \dots , j_p\} \subset J} \, a_{j_1} \cdots a_{j_p}
\, \langle (H_{j_1}, \dots , H_{j_p}), F_1 \rangle
\, \langle (H_{j_1}, \dots , H_{j_p}), F_2 \rangle ,
\eea
where the sum is over all unordered $p$-element subsets.

\begin{lem} [{\cite[Lemma 3.2.5]{SV}}]
\label{lem hom}
The contravariant map \Ref{S map} defines a homomorphism of complexes
$\mc S^{(a)} :(\FF(\A),d)\to (\OS(\A),d^{(a)})$.
\qed
\end{lem}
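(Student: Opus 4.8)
The plan is to verify the chain-map identity $d^{(a)}\circ\mc S^{(a)}=\mc S^{(a)}\circ d$ directly on a generator $F_{\al_0,\dots,\al_p}$ of $\FF^p(\A)$, taking the contravariant map \Ref{S map} as the already well-defined map $\mc S^{(a)}:\FF^p(\A)\to\OS^p(\A)$ and matching the two resulting elements of $\OS^{p+1}(\A)$ monomial by monomial. First I would expand the right-hand side. By definition of $d^{(a)}$ and of $\mc S^{(a)}$,
\be
d^{(a)}\bigl(\mc S^{(a)}F_{\al_0,\dots,\al_p}\bigr)
=\om^{(a)}\cdot\!\sum a_{j_1}\cdots a_{j_p}\,(H_{j_1},\dots,H_{j_p})
=\sum_{j\in J}\ \sum_{(j_1,\dots,j_p)}
a_j\,a_{j_1}\cdots a_{j_p}\,(H_j,H_{j_1},\dots,H_{j_p}),
\ee
where the tuples $(j_1,\dots,j_p)$ run over all indices with $H_{j_i}\supset X_{\al_i}$.

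The key reduction uses relation (i). Since $H_{j_i}\supset X_{\al_i}\supset X_{\al_p}$ for every $i$, a surviving monomial forces $H_{j_1}\cap\dots\cap H_{j_p}=X_{\al_p}$, an edge of codimension $p$; hence $(H_j,H_{j_1},\dots,H_{j_p})$ vanishes by relation (i) unless $H_j\cap X_{\al_p}$ is a (nonempty) edge of codimension $p+1$. Thus only those $j$ with $H_j\not\supset X_{\al_p}$ and $H_j\cap X_{\al_p}\ne\emptyset$ contribute, and each such term determines the edge $X_\bt:=H_j\cap X_{\al_p}$ with $X_\bt\subset X_{\al_p}$, $l_\bt=p+1$, and $H_j\supset X_\bt$.

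Next I would regroup the right-hand side according to $X_\bt$: for each edge $X_\bt\subset X_{\al_p}$ of codimension $p+1$ the contribution is the sum over $(j_1,\dots,j_p)$ with $H_{j_i}\supset X_{\al_i}$ and over $j$ with $H_j\supset X_\bt$ (terms with $H_j\supset X_{\al_p}$ vanish by relation (i), so widening the range of $j$ is harmless). On the other side, $\mc S^{(a)}(dF_{\al_0,\dots,\al_p})=\sum_\bt\mc S^{(a)}F_{\al_0,\dots,\al_p,\bt}$ by \Ref{d in F}, and by \Ref{S map} each summand equals the sum over $(j_1,\dots,j_p,j_{p+1})$ with $H_{j_i}\supset X_{\al_i}$ and $H_{j_{p+1}}\supset X_\bt$ of $a_{j_1}\cdots a_{j_{p+1}}(H_{j_1},\dots,H_{j_p},H_{j_{p+1}})$. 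For a fixed $\bt$ the two index sets coincide (identify $j$ with $j_{p+1}$) and the weight products agree, so the only difference between the two families of monomials is the position of the newly adjoined hyperplane.

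Finally I would reconcile the monomials $(H_j,H_{j_1},\dots,H_{j_p})$ and $(H_{j_1},\dots,H_{j_p},H_j)$ using the antisymmetry relation (ii); this is precisely the graded-commutativity that distinguishes multiplication by $\om^{(a)}$ on the left, as in $d^{(a)}$, from adjoining the new edge on the right, as in \Ref{d in F}. Carefully accounting for this reordering sign, and checking that it is compatible with the conventions fixing the two differentials, is the only delicate point of the argument and is where I expect the main obstacle to lie; once the signs are reconciled the two expansions agree termwise and the identity $d^{(a)}\circ\mc S^{(a)}=\mc S^{(a)}\circ d$ follows. Note that relations of type (iii) are not needed here, because every surviving monomial is attached to a unique edge $X_\bt$, so the regrouping above is a genuine partition rather than a cancellation.
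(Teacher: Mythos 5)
Your combinatorial skeleton is exactly right, and it is the natural (indeed essentially the only) way to prove this statement; note that the paper itself gives no argument, delegating entirely to the citation [SV, Lemma 3.2.5], so there is no internal proof to compare routes with. Your support analysis via relation (i), the identification $X_\bt=H_j\cap X_{\al_p}$ for each surviving monomial, the observation that enlarging the range of $j$ to all hyperplanes containing $X_\bt$ only adds zero monomials, and the remark that the regrouping is a genuine partition so relation (iii) is never needed — all of this is correct and complete.

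The gap is precisely the point you deferred. The sign does \emph{not} come out to $+1$ under the conventions as literally stated in this paper. Here $d^{(a)}$ is \emph{left} multiplication, $x\mapsto\om^{(a)}\cdot x$, so on your left-hand side the new hyperplane is prepended: each surviving monomial reads $a_j\,a_{j_1}\cdots a_{j_p}\,(H_j,H_{j_1},\dots,H_{j_p})$. On the right-hand side, \Ref{d in F} appends the new edge $X_\bt$ at the end of the flag and \Ref{S map} places the corresponding hyperplane last, giving $a_{j_1}\cdots a_{j_p}\,a_j\,(H_{j_1},\dots,H_{j_p},H_j)$. Moving $H_j$ across $p$ positions via relation (ii) costs $(-1)^p$, and this sign is global in each degree, so what your computation actually proves is $\mc S^{(a)}\circ d=(-1)^p\, d^{(a)}\circ\mc S^{(a)}$ on $\FF^p(\A)$. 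This is already visible for $p=1$ with $X_{\al_1}=H_i$: one side is $\sum_j a_j a_i\,(H_j,H_i)$, the other is $\sum_j a_i a_j\,(H_i,H_j)$, and these are negatives of each other. To close the argument you must resolve this explicitly, for instance by observing that $\OS(\A)$ is graded-commutative, so that right multiplication $x\mapsto x\cdot\om^{(a)}$ is also a differential equal to $(-1)^p\,\om^{(a)}\cdot x$ in degree $p$; with that (SV-compatible) convention for $d^{(a)}$ your termwise matching is exact, while with the left-multiplication convention one must instead rescale $\mc S^{(a)}$ in degree $p$ by $(-1)^{p(p-1)/2}$ (or make a similar degree-dependent adjustment). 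Asserting that once the signs are reconciled the two expansions agree termwise is not a proof of the one step you yourself identified as the crux — and as set up, that step fails in every odd degree.
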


\subsection{Generic weights}
\label{sec rem on gener wei}

\begin{thm}[{\cite[Theorem 3.7]{SV}}]
\label{thm Shap nondeg}
If the weight $a$ is such that none of the dense edges has  weight zero, then
the contravariant form is nondegenerate. In particular, we have  an isomorphism
of complexes $\mc S :(\FF(\A),d)\to (\OS(\A),d^{(a)})$.
\qed
\end{thm}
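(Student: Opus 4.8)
The plan is to reduce nondegeneracy of the contravariant form, on each graded piece $\FF^p(\A)$, to a determinant computation for central arrangements, and then to establish a product formula for that determinant over the dense edges. Once $S^{(a)}$ is shown nondegenerate on every $\FF^p(\A)$, the identification $\OS^p(\A)\cong\FF^p(\A)^*$ shows that the contravariant map is a degreewise isomorphism; since it is already a morphism of complexes by Lemma \ref{lem hom}, it is then an isomorphism of complexes, so the ``in particular'' clause is immediate.

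First I would block-diagonalize the form. Writing $\FF^p(\A)=\oplus_{l_\al=p}\FF_\al$, I note from the definition of the pairing that $\langle(H_{j_1},\dots,H_{j_p}),F_{\al_0,\dots,\al_p}\rangle$ vanishes unless $H_{j_1}\cap\dots\cap H_{j_p}=X_{\al_p}$. Hence in $S^{(a)}(F_1,F_2)$ with $F_1\in\FF_\al$ and $F_2\in\FF_\bt$, a nonzero summand forces the common index set to intersect in both $X_\al$ and $X_\bt$, so $X_\al=X_\bt$. Thus $S^{(a)}$ is block-diagonal with one block $S^{(a)}|_{\FF_\al}$ per edge $X_\al$ of codimension $p$, and nondegeneracy on $\FF^p(\A)$ is equivalent to nondegeneracy of each block. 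I would then localize: for fixed $X_\al$, every flag in $\FF_\al$ and every hyperplane entering $S^{(a)}|_{\FF_\al}$ contains $X_\al$, so projecting to $\C^k/X_\al\cong\C^p$ turns the subarrangement $(H_j)_{j\in J_\al}$ into an essential central arrangement $\B$ of rank $p$ and identifies $(\FF_\al,S^{(a)}|_{\FF_\al})$ with the top flag space of $\B$ and its own contravariant form. Since density of an edge depends only on the hyperplanes through it, and $J_\bt\subset J_\al$ whenever $X_\bt\supset X_\al$, the dense edges of $\B$ are exactly the dense edges $X_\bt\supseteq X_\al$ of $\A$ with the same weights $a_\bt$, so the hypothesis passes to $\B$.

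It therefore suffices to prove, for an essential central arrangement $\B$ of rank $p$, the determinant formula $\det S^{(a)}|_{\FF^p(\B)}=\pm\prod_{X\text{ dense}}a_X^{m_X}$ with positive integer exponents $m_X$; nondegeneracy is then immediate from the assumption that no dense edge has weight zero. I would argue by induction on $p$. If the vertex of $\B$ is not dense, the arrangement splits in complementary coordinates as a product; the top flag space and form factor as tensor products, the determinant becomes a product of powers of the factor determinants, and the dense edges and their weights are inherited from the factors, so the formula follows from the inductive hypothesis. If the vertex is dense, with weight $a_0=\sum_{j}a_j$, the genuinely new top-rank factor is $a_0$: I would show $a_0$ divides $\det S^{(a)}|_{\FF^p(\B)}$ with the correct multiplicity by specializing $a_0\to 0$ and exhibiting the resulting kernel as the image of a suitable boundary or restriction map, the complementary cofactor being governed by the contravariant forms of the codimension-one restrictions, which carry only the proper dense edges and are controlled inductively.

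The main obstacle is precisely this irreducible inductive step: separating the new factor $a_0$ from the contributions of the proper dense edges and pinning the exponents $m_X$ down as positive integers. Concretely, the work is to set up the correct comparison between $S^{(a)}$ on $\FF^p(\B)$ and the contravariant forms of the restrictions, to verify that specializing $a_0=0$ produces a kernel of the predicted dimension, and to match degrees so that the product over dense edges accounts for the entire determinant. The positivity of the exponents, equivalently the absence of any spurious factors, is exactly what makes ``no dense edge of weight zero'' both necessary and sufficient for nondegeneracy.
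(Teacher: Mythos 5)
First, a point about the comparison itself: the paper does not prove this theorem at all. It is quoted from \cite[Theorem 3.7]{SV}, which is why the statement is closed immediately with no argument following it. So there is no internal proof to measure your attempt against; your proposal has to be judged against the proof in \cite{SV}, and on that basis it has a genuine gap.

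Your reductions are all correct and are the standard first moves: $S^{(a)}$ on $\FF^p(\A)$ is indeed block-diagonal with respect to $\FF^p(\A)=\oplus_{l_\al=p}\FF_\al$ (a nonzero summand forces $H_{j_1}\cap\dots\cap H_{j_p}$ to equal both $X_\al$ and $X_\bt$); each block is the top-degree contravariant form of the essential central arrangement obtained by localizing at $X_\al$, whose dense edges and their weights are inherited from $\A$ because $J_\bt\subset J_\al$ whenever $X_\bt\supset X_\al$; the reducible case splits as a tensor product of forms; and degreewise nondegeneracy together with Lemma \ref{lem hom} and $\dim\OS^p(\A)=\dim\FF^p(\A)$ gives the isomorphism of complexes. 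But all of this only reduces the theorem to what you candidly call ``the main obstacle,'' and that obstacle \emph{is} the theorem: the determinant formula $\det S^{(a)}|_{\FF^p(\B)}=c\prod_X a_X^{m_X}$ (product over dense edges, $c\ne0$, $m_X\in\Z_{>0}$) for an irreducible essential central arrangement $\B$. Your sketch for it --- specialize $a_0\to0$, exhibit a kernel, control the cofactor by restrictions --- is a plan, not an argument. To execute it you would need at least: (i) an explicit construction of kernel vectors of the specialized form at generic points of the hyperplane $a_0=0$, with a dimension count giving a lower bound on the order of vanishing; (ii) a matching upper bound --- since each Gram matrix entry is homogeneous of degree $p$ in $a$, the determinant is homogeneous of degree $p\dim\FF^p(\B)$, so one must actually compute the exponents $m_X$ and verify $\sum_X m_X=p\dim\FF^p(\B)$ to rule out irreducible factors that are not dense-edge weights; nothing in your outline indicates how to do this, and it is the real combinatorial content; and (iii) an argument that the resulting constant $c$ is nonzero, i.e.\ that the form is not identically degenerate. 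Items (i)--(iii) constitute essentially the whole of the proof in \cite{SV}. As written, your proposal is a correct reduction of the statement to its hardest case, together with an acknowledgment that the hardest case remains to be done; that is a gap, not a proof.
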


\begin{thm} [\cite{SV, Y, OT2}]
\label{thm Y, O ,SV}
If the weight $a$ is such that none of the dense edges has  weight zero, then
 $H^p(\OS^*(\A),d^{(a)}) = 0$ for $ p \ne k$
and dim $H^k(\OS^*,d^{(a)}) = |\chi(U(\A))|$.
\qed
\end{thm}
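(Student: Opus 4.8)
The plan is to read the statement off the two results that immediately precede it, since together they package exactly the content of this theorem. By Lemma~\ref{lem hom} the contravariant map $\mc S^{(a)}$ is already a homomorphism of complexes $(\FF(\A),d)\to(\OS(\A),d^{(a)})$, so the only thing left to do is promote it to a quasi-isomorphism. The hypothesis that no dense edge has weight zero is precisely the hypothesis of Theorem~\ref{thm Shap nondeg}, which asserts that under this condition the contravariant form is nondegenerate and hence $\mc S^{(a)}$ is an \emph{isomorphism} of complexes. An isomorphism of cochain complexes induces isomorphisms on cohomology in every degree, so $H^p(\OS(\A),d^{(a)})\cong H^p(\FF(\A),d)$ for all $p$.

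It then remains to invoke Theorem~\ref{them cooh F}, which gives $H^p(\FF(\A),d)=0$ for $p\ne k$ and $\dim H^k(\FF(\A),d)=|\chi(U(\A))|$. Transporting these equalities across the isomorphism yields $H^p(\OS(\A),d^{(a)})=0$ for $p\ne k$ and $\dim H^k(\OS(\A),d^{(a)})=|\chi(U(\A))|$, which is the claim; this is exactly the comparison underlying the \cite{SV} proof. In this route essentially all of the work is hidden in the two cited theorems, and the step I would flag as carrying the genuine content is the nondegeneracy of the contravariant form on dense-edge nonresonant weights: without it $\mc S^{(a)}$ need not be an isomorphism and the comparison breaks down.

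If one instead wanted a proof internal to the Orlik-Solomon algebra, in the flavor of the \cite{Y,OT2} arguments, I would proceed by deletion--restriction. I would fix a hyperplane $H_n$, form the deleted arrangement $\A'=\A\setminus\{H_n\}$ and the restriction $\A''=\A^{H_n}$, use the Orlik-Solomon short exact sequence relating $\OS(\A)$, $\OS(\A')$ and $\OS(\A'')$, turn it into a long exact sequence in $d^{(a)}$-cohomology, and induct on the number of hyperplanes. The delicate point here, and the main obstacle of this second route, is the bookkeeping of the weights: one must check that the nonresonance condition on dense edges is inherited by $\A'$ and $\A''$ and that the connecting homomorphisms vanish outside the top degree, and it is precisely the dense-edge weight hypothesis that guarantees this.

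Finally, whichever route establishes acyclicity away from degree $k$, the dimension count follows from an Euler characteristic computation. Since $\dim\OS^p(\A)$ equals the $p$-th Betti number of $U(\A)$, one has $\sum_{p}(-1)^p\dim\OS^p(\A)=\chi(U(\A))$, while acyclicity forces the left-hand side to equal $(-1)^k\dim H^k$. Hence $\dim H^k=(-1)^k\chi(U(\A))=|\chi(U(\A))|$, which simultaneously records that $(-1)^k\chi(U(\A))\ge 0$.
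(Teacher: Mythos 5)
Your main argument is exactly the paper's own proof: the paper derives this theorem in one line as a corollary of Lemma~\ref{lem hom}, Theorem~\ref{thm Shap nondeg} (nondegeneracy of the contravariant form making $\mc S^{(a)}$ an isomorphism of complexes), and Theorem~\ref{them cooh F}, which is precisely your first two paragraphs. The additional deletion--restriction sketch and the Euler-characteristic dimension count are correct in spirit but unnecessary, since Theorem~\ref{them cooh F} already supplies both the acyclicity and the dimension.
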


Theorem \ref{thm Y, O ,SV} is a corollary of Lemma \ref{lem hom} and Theorems \ref{them cooh F}, \ref{thm Shap nondeg}.

\begin{cor}
\label{lem on dim of sing}
If the weight $a$ is such that none of the dense edges has  weight zero, then
the dimension of\, $\on{Sing}_a\FF^k(\A)$ equals $|\chi(U(\A))|$.
\end{cor}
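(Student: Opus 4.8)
The plan is to identify $\on{Sing}_a\FF^k(\A)$ with the annihilator of a subspace of $\OS^k(\A)$, and then to read off its dimension directly from the cohomology already computed in Theorem \ref{thm Y, O ,SV}. The whole argument is formal linear algebra built on the duality of the two complexes, so I do not anticipate a genuine obstacle.

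First I would use that the pairing $\OS^k(\A)\otimes\FF^k(\A)\to\C$ is perfect, so $\FF^k(\A)$ is the dual of $\OS^k(\A)$ and every subspace $W\subset\OS^k(\A)$ has an annihilator $W^\perp\subset\FF^k(\A)$ with $\dim W^\perp=\dim\OS^k(\A)-\dim W$. By the very definition of a singular vector, $F\in\FF^k(\A)$ lies in $\on{Sing}_a\FF^k(\A)$ exactly when $\langle d^{(a)}x,F\rangle=0$ for all $x\in\OS^{k-1}(\A)$; that is, $\on{Sing}_a\FF^k(\A)$ is precisely the annihilator of $W:=\mathrm{im}\,\bigl(d^{(a)}\colon\OS^{k-1}(\A)\to\OS^k(\A)\bigr)$, equivalently the kernel of the transpose of $d^{(a)}$. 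Hence $\dim\on{Sing}_a\FF^k(\A)=\dim\OS^k(\A)-\dim\mathrm{im}\,d^{(a)}$.

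Next I would observe that $k$ is the top degree: since $\A$ is essential in $\C^k$ one has $\OS^p(\A)=0$ for $p>k$, so the differential out of $\OS^k(\A)$ vanishes and therefore $H^k(\OS^*(\A),d^{(a)})=\OS^k(\A)/\mathrm{im}\,d^{(a)}$. Comparing dimensions, $\dim\on{Sing}_a\FF^k(\A)=\dim\OS^k(\A)-\dim\mathrm{im}\,d^{(a)}=\dim H^k(\OS^*(\A),d^{(a)})$. Finally, under the hypothesis that no dense edge has weight zero, Theorem \ref{thm Y, O ,SV} gives $\dim H^k(\OS^*(\A),d^{(a)})=|\chi(U(\A))|$, which is exactly the claimed value and completes the proof.

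The only point deserving a line of care is the identification of $\on{Sing}_a\FF^k(\A)$ as an annihilator, for which one uses that a linear map and its transpose have equal rank. An alternative, slightly longer, route would go through the flag complex: the generic-weight hypothesis makes $\mc S^{(a)}$ an isomorphism of complexes by Theorem \ref{thm Shap nondeg}, whence $H^k(\FF(\A),d)\cong H^k(\OS^*(\A),d^{(a)})$, and Theorem \ref{them cooh F} recovers the same number $|\chi(U(\A))|$. I prefer the annihilator argument since it isolates $\on{Sing}_a\FF^k(\A)$ directly and needs only Theorem \ref{thm Y, O ,SV}.
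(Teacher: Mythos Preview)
Your proof is correct and is exactly the argument the paper has in mind: the corollary is placed immediately after Theorem~\ref{thm Y, O ,SV} with no further proof, and the intended deduction is precisely your annihilator computation $\dim\on{Sing}_a\FF^k(\A)=\dim\OS^k(\A)-\dim\mathrm{im}\,d^{(a)}=\dim H^k(\OS^*(\A),d^{(a)})=|\chi(U(\A))|$. Your remark that $\OS^p(\A)=0$ for $p>k$ (so the top cohomology is just the cokernel) is the only point needing a word, and you supplied it.
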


Notice that none of the dense edges has  weight zero if all weights are positive.

\subsection{Differential forms}
\label{Differential forms}

For  $j\in J$,  fix defining equations $f_j=0$ for the hyperplanes $H_j$,
where $f_j = b^1_jt_1+\dots+b^k_jt_k + z_j$
with $b^i_j, z_j\in \C$.
Consider the logarithmic differential 1-form
$\omega_j = df_j/f_j$ on $\C^k$.
Let $\bar{\OS}(\A)$ be the exterior $\C$-algebra of differential forms
generated by 1 and $\omega_j$, $j\in J$.
The map ${\OS}(\A) \to \bar{\OS}(\A), \ (H_j) \mapsto \omega_j$,
is an isomorphism. We identify ${\OS}(\A)$ and $\bar{\OS}(\A)$.

For $I=\{i_1,\dots,i_k\}\subset J$, denote $d_I=d_{i_1,\dots,i_k}=\det_{i,l=1}^k(b^i_{i_l})$.
Then
\bean
\label{wedge I}
\om_{i_1}\wedge\dots\wedge \om_{i_k}=\frac{ d_{i_1,\dots,i_k}}{f_{i_1}\dots f_{i_k}}\,dt_{1}\wedge\dots\wedge dt_{k}\,.
\eean

\begin{lem}
\label{lem 1/f separate}
The functions $(1/f_j)_{j\in J}$ separate points of $U(\A)$.
\qed

\end{lem}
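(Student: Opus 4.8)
The plan is to reduce the assertion about the functions $1/f_j$ to one about the affine forms $f_j$ themselves, and then read off the conclusion from the essentiality of $\A$. Since each $f_j$ is nowhere vanishing on $U(\A)$, for points $p,q\in U(\A)$ we have $1/f_j(p)=1/f_j(q)$ if and only if $f_j(p)=f_j(q)$. Hence it suffices to prove that the family $(f_j)_{j\in J}$ separates points; in fact I will show it separates all of $\C^k$, which is stronger than needed.

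Next I would translate the equalities $f_j(p)=f_j(q)$ into linear algebra. Writing $v=p-q\in\C^k$ and using $f_j=\sum_{i=1}^k b^i_j t_i+z_j$, the constant terms $z_j$ cancel and $f_j(p)=f_j(q)$ becomes $\sum_{i=1}^k b^i_j v_i=0$. Thus, if none of the $f_j$ distinguishes $p$ from $q$, the vector $v$ is annihilated by every covector $(b^1_j,\dots,b^k_j)$, the linear part (normal) of the hyperplane $H_j$.

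The final step uses that $\A$ is essential. By definition $\A$ has a vertex $X_\al$, an edge with $l_\al=\mathrm{codim}_{\C^k}X_\al=k$. Since $X_\al=\bigcap_{j\in J_\al}H_j$ is a single point, the rank of the normals $\{(b^i_j)_{i=1}^k:j\in J_\al\}$ equals the codimension $k$, so these covectors already span $(\C^k)^*$. A fortiori the whole family of normals spans $(\C^k)^*$, and therefore the only vector annihilated by all of them is $v=0$. Hence $p=q$, which shows that $(f_j)_{j\in J}$, and with it $(1/f_j)_{j\in J}$, separates the points of $U(\A)$.

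The argument presents no genuine obstacle; the only point meriting care is the identification of the codimension of a nonempty intersection of hyperplanes with the rank of their normal covectors, which is precisely what converts the essentiality hypothesis into the spanning statement that forces $v=0$.
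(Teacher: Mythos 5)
Your proof is correct: the reduction from $1/f_j$ to $f_j$, the passage to the difference vector $v=p-q$, and the use of a vertex (an edge of codimension $k$, whose defining normals therefore have rank $k$) to force $v=0$ are all sound, and the one point you flag — that the codimension of a nonempty intersection of affine hyperplanes equals the rank of their linear parts — is exactly the right justification. The paper states this lemma with no proof at all (it is marked as immediate), and your argument is precisely the standard one it implicitly relies on, so there is nothing to compare beyond noting that you have filled in the omitted details correctly.
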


\subsection{Master function}
\label{master function}

The {\it master function} of the weighted arrangement $(\A, a)$ is
\bean
\label{def mast k}
\Phi_{\A,a} = {\sum}_{j\in J}\,a_j \log f_j,
\eean
a multivalued function on $U(\A)$. We have
$d\Phi_{\A,a} = \sum_{j\in J} a_j\om_j = \om^{(a)} \in \OS^1(\A)$.
Let $C_{\A,a} =\{ u\in U(\A)\ |\ \frac{\der\Phi_{\A,a}}{\der t_i}(u)=0\ \on{for}\ i=1,\dots,k\}$ be the critical set of  $\Phi_{\A,a}$.
The critical point equations can be reformulated as the equation $\om^{(a)}|_u=0$.
Notice that
\bean
\label{derivative}
\frac{\der\Phi_{\A,a}}{\der t_i} = {\sum}_{j=1}^n b^i_j\frac{a_j}{f_j}\qquad
\on{and}
\qquad
 \frac{\der\Phi_{\A,a}}{\der z_j} = \frac{a_j}{f_j}.
\eean
Define the {\it Hessian} of the master function,
$\on{Hess}_{\A,a} =
\on{det}_{i,j=1}^k \Big(\frac{\der^2\Phi_{\A,a}}{\der t_i\der t_j}   \Big)$.
A critical point $u\in C_{\A,a}$ is {\it nondegenerate} if
$\on{Hess}_{\A,a}(u)\ne 0$.
We have the formula in \cite{V4}:
\bean
\label{Hess f}
\on{Hess}_{\A,a} \,=\, (-1)^k
{\sum}_{I\subset J, |I|=k} d^2_{I}\,
{\prod}_{i\in I} \frac {a_{i}}{f^2_{i}}\, .
\eean

\subsection{Isolated critical points}
\label{Isolated critical points}

\begin{thm}[\cite{V2, OT1, Si}]
\label{thm V,OT,S}
For generic exponent $a\in(\C^\times)^n$, all critical points of
$\Phi_{\A,a}$ are nondegenerate and the number of critical points equals $|\chi(U(\A))|$.
\qed
\end{thm}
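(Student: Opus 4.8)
Theorem \ref{thm V,OT,S} asserts: for generic exponent $a\in(\C^\times)^n$, every critical point of the master function $\Phi_{\A,a}$ on $U(\A)$ is nondegenerate, and the number of such critical points equals $|\chi(U(\A))|$.

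The plan is to prove the two assertions somewhat separately, exploiting the fact that for a fixed essential arrangement the master function $\Phi_{\A,a}$ depends polynomially (in fact linearly) on the weight vector $a$, so that the critical-point equations $\om^{(a)}|_u=0$ form a family of algebraic equations on $U(\A)\times(\C^\times)^n$. First I would recast the critical-point equations explicitly. By \eqref{derivative}, a point $u\in U(\A)$ is critical iff $\sum_{j=1}^n b^i_j\,a_j/f_j(u)=0$ for $i=1,\dots,k$. Introducing the variables $g_j = a_j/f_j$, these are $k$ linear equations in the $g_j$; together with the defining relations $f_j = \sum_i b^i_j t_i + z_j$ they cut out $C_{\A,a}$ as an affine variety. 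The key structural input is that, since $\A$ is essential, the logarithmic forms $\om_j$ span, and the $1/f_j$ separate points of $U(\A)$ by Lemma \ref{lem 1/f separate}, so $C_{\A,a}$ has the expected dimension $0$ for generic $a$.

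For the count, I would invoke the Orlik–Solomon/Aomoto cohomology already developed in the excerpt. The natural route is to identify the number of critical points (counted with multiplicity, via the total length of the local algebras $\OC$) with $\dim H^k(\OS^*(\A),d^{(a)})$, which by Theorem \ref{thm Y, O ,SV} equals $|\chi(U(\A))|$ whenever no dense edge has weight zero — a condition satisfied for generic $a$. Concretely, one shows that for generic $a$ the critical set is finite and that the sum of Milnor numbers of $\Phi_{\A,a}$ at its critical points equals $|\chi(U(\A))|$. This is the content of the classical result that for a weighted arrangement with generic weights the master function is a "tame" function whose only critical values are isolated and whose total critical number is the signed Euler characteristic of the complement; the comparison with $H^k(\OS^*,d^{(a)})$ follows because $d^{(a)} = \om^{(a)}\wedge(\cdot)$ is exactly multiplication by $d\Phi_{\A,a}$, so the Aomoto complex computes the cohomology of the twisted de Rham / Koszul complex of $d\Phi_{\A,a}$.

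For nondegeneracy, I would argue by a genericity/Sard-type dimension count on the incidence variety $Z=\{(u,a)\in U(\A)\times(\C^\times)^n : \om^{(a)}|_u=0\}$. One shows that the projection $Z\to(\C^\times)^n$ is generically finite and that the locus where $\on{Hess}_{\A,a}(u)=0$ at a critical point is a proper subvariety of $(\C^\times)^n$; since both conditions are Zariski-open and nonempty, their common complement is a generic set. The nondegeneracy together with finiteness then forces all local multiplicities to equal $1$, so the number of (distinct) critical points exactly attains $|\chi(U(\A))|$, matching the cohomological count above. The main obstacle I anticipate is the nondegeneracy/finiteness step: one must rule out, for generic $a$, both critical points escaping to infinity or to the deleted hyperplanes (properness at the boundary of $U(\A)$) and the persistence of degenerate critical points along the whole family. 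Controlling the behavior near $\cup_j H_j$ and at infinity — i.e.\ establishing that no critical mass is lost to the discriminant — is the delicate point; the linear dependence of $\Phi_{\A,a}$ on $a$ and the explicit Hessian formula \eqref{Hess f}, which expresses $\on{Hess}_{\A,a}$ as a positive-coefficient combination of the $d_I^2\prod_{i\in I}a_i/f_i^2$, are the tools that make this tractable and simultaneously pin down genericity of nonvanishing of the Hessian.
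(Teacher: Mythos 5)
The first thing to note is that the paper does not prove this statement at all: it is quoted with citations to the literature (Varchenko's book, Orlik--Terao, Silvotti) and a \qed, so there is no internal argument to compare against; your attempt has to stand on its own, and it contains a genuine gap at its core. The gap is in the counting step. You identify the number of critical points (counted with multiplicity, i.e.\ $\dim_\C\OC$) with $\dim H^k(\OS(\A),d^{(a)})$ on the grounds that ``the Aomoto complex computes the cohomology of the twisted de Rham / Koszul complex of $d\Phi_{\A,a}$.'' That identification is precisely the hard content of the theorem, not a tool available for proving it. The Koszul complex of the derivatives $\der\Phi_{\A,a}/\der t_i$ lives on all of $\mc O(U(\A))$ --- equivalently it is $(\Omega^{\bullet}_{\mathrm{alg}}(U(\A)),\,\om^{(a)}\wedge\cdot\,)$ --- and is infinite-dimensional in each degree, whereas the Aomoto complex $(\OS(\A),d^{(a)})$ is the finite-dimensional subcomplex spanned by products of the $\om_j$. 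The claim that the inclusion is a quasi-isomorphism for generic $a$ (equivalently, that no critical mass lives outside the combinatorial model, and none escapes to $\cup_{j}H_j$ or to infinity) is exactly what Orlik--Terao and Silvotti prove, by nbc-type counting on a compactification and by a Gauss--Bonnet argument for logarithmic forms, respectively; Varchenko's argument instead uses real Morse theory and bounded regions for positive weights. Theorem \ref{thm Y, O ,SV} only gives the purely combinatorial equality $\dim H^k(\OS(\A),d^{(a)})=|\chi(U(\A))|$; the bridge from that to critical points is the theorem you are trying to prove, and your appeal to ``the classical result that the master function is tame'' is a citation of the conclusion, not an argument.

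By contrast, your nondegeneracy step can be made rigorous along the lines you sketch: the incidence variety $Z=\{(u,a)\in U(\A)\times(\C^\times)^n \mid \om^{(a)}|_u=0\}$ is smooth of pure dimension $n$, since the defining equations are linear in $a$ of rank $k$ (here essentiality of $\A$ is used, cf.\ the proof of Lemma \ref{lem crit smooth}), and generic smoothness of the projection $Z\to(\C^\times)^n$ in characteristic zero yields that for generic $a$ the critical scheme is finite and reduced, i.e.\ all critical points are nondegenerate. But this is only a fiberwise statement: it bounds the local structure and says nothing about how many points there are, nor does it prevent the count from jumping as $a$ varies because points escape to the deleted hyperplanes or to infinity --- the properness issue you yourself flag as ``the delicate point'' and then leave unaddressed. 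Since both the comparison of complexes and the no-escape statement are asserted rather than proved, the proposal does not constitute a proof.
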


  Consider the projective space $\Pee^k$ compactifying $\C^k$. Assign
the weight $a_\infty=-\sum_{j\in J} a_j$ to the hyperplane
$H_\infty=\Pee^k-\C^k$. Denote by  $\A^\vee$
the arrangement $(H_j)_{j\in J\cup \infty}$ in $\Pee^k$.
The weighted arrangement $(\A, a)$ is called {\it unbalanced}
if the weight of any dense edge of $\A^\vee$ is nonzero, see \cite{V5}.
For example, $(\A, a)$ is unbalanced if all weights $(a_j)_{j\in J}$ are positive.
The unbalanced weights form a Zariski open subset in the space of all weights $a\in (\C^\times)^n$.

\begin{lem} [{\cite[Section 4]{V5}}]
\label{lem crit 1} If $(\A, a)$ is unbalanced, then
all critical points of $\Phi_{\A,a}$  are isolated and the sum of their Milnor numbers equals
$|\chi(U(\A))|$.
\qed

\end{lem}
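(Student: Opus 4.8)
The plan is to identify the critical points of $\Phi_{\A,a}$ with the zeros of the logarithmic one-form $\om^{(a)}=\sum_{j\in J}a_j\>\om_j$ and to show that the unbalanced hypothesis confines these zeros to a fixed compact part of $U(\A)$, uniformly in $a$; a conservation-of-number argument then transfers the count from the generic case of Theorem~\ref{thm V,OT,S}. First I would compactify. Adjoining $H_\infty=\Pee^k-\C^k$ with weight $a_\infty=-\sum_{j\in J}a_j$ makes the total weight of $\A^\vee$ vanish, which is the condition under which $\om^{(a)}=d\Phi_{\A,a}$ extends to a global logarithmic one-form on $\Pee^k$ with poles along $\A^\vee$ and residues $a_j,a_\infty$. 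Passing to the De Concini--Procesi wonderful model $\pi\colon M\to\Pee^k$ for the building set of dense edges of $\A^\vee$, the preimage of $\bigcup_j H_j\cup H_\infty$ becomes a normal crossing divisor $D=\bigcup_\al D_\al$ whose components are indexed by the dense edges $X_\al$ of $\A^\vee$, and $U(\A)$ is identified with $M\minus D$.

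The key local computation is that along each $D_\al$ the pulled-back form has residue equal to the edge weight, $\on{res}_{D_\al}\pi^*\om^{(a)}=a_\al$. If $(\A,a)$ is unbalanced then every $a_\al\ne0$, so $\pi^*\om^{(a)}$ has a logarithmic pole with nonzero residue along every boundary component; in local coordinates $g_1,\dots,g_m,x_{m+1},\dots,x_k$ adapted to a stratum $D_{\al_1}\cap\dots\cap D_{\al_m}$ one has $\pi^*\om^{(a)}=\sum_{i=1}^m a_{\al_i}\,dg_i/g_i+(\text{holomorphic})$, whose coefficients blow up as the $g_i\to0$. Hence $\pi^*\om^{(a)}$ has no zeros on $D$, so the critical set $C_{\A,a}$ is a compact analytic subset of the affine variety $U(\A)$ and is therefore finite; in particular all critical points are isolated.

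To obtain the count I would run a conservation-of-number argument. The unbalanced weights form a connected Zariski-open subset $\Omega\subset(\C^\times)^n$. For $a$ ranging over a compact subset of $\Omega$ the residues $a_\al$ stay bounded away from $0$, so the local analysis above produces a single compact $K\subset U(\A)$ containing every $C_{\A,a}$; thus the family $\{(a,u):a\in\Omega,\ u\in C_{\A,a}\}\to\Omega$ is proper. The total multiplicity, namely the sum of the Milnor numbers of the critical points, is then locally constant on $\Omega$, hence constant by connectedness. At a generic weight $a^0\in\Omega$, Theorem~\ref{thm V,OT,S} produces $|\chi(U(\A))|$ nondegenerate critical points, each of Milnor number one, so the total multiplicity equals $|\chi(U(\A))|$ for every unbalanced weight, which is the assertion.

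The main obstacle is the boundary analysis underlying the second and third steps: constructing a compactification on which $\om^{(a)}$ becomes logarithmic with controlled residues, and verifying that nonvanishing of the dense-edge weights of $\A^\vee$ (the unbalanced condition) is exactly what prevents zeros from escaping to the boundary, uniformly in $a$. Once this uniform no-escape is secured the count is a formal consequence of the generic case, so the delicate part is the residue identity $\on{res}_{D_\al}\pi^*\om^{(a)}=a_\al$ together with the verification of the local logarithmic-pole structure at every stratum of $D$, including the deepest intersections arising from nested families of dense edges.
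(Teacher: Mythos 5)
Your proposal is correct, and it is essentially the proof of this lemma: the paper itself states the lemma with a citation to \cite[Section 4]{V5} and no argument, and the cited source proves it exactly as you do, by passing to a compactification on which the pulled-back form $\om^{(a)}$ is logarithmic with residues given by the dense-edge weights of $\A^\vee$, concluding from the unbalanced hypothesis that zeros cannot approach the boundary (so $C_{\A,a}$ is compact, hence finite), and then transferring the count $|\chi(U(\A))|$ from generic weights (Theorem \ref{thm V,OT,S}) by a conservation-of-number/deformation argument. Your uniform no-escape observation over compact sets of unbalanced weights, which makes the properness and hence the local constancy of the total Milnor number rigorous, is precisely the needed ingredient.
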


\subsection {Residue}

Let  $\mc O(U(\A))$ be the algebra of regular functions on $U(\A)$  and
\linebreak
$I_{\A,a} =\langle \frac{\partial \Phi_{\A,a}} {\partial t_i }
\ |\ i=1,\dots,k\ \rangle \subset \mc O(U(\A))$
the ideal generated by the first  derivatives of $\Phi_{\A,a}$.
Let $ \OC = \mc O(U(\A))/ I_{\A,a} $ be
the algebra of functions on the critical set  and
$[\,]: \mc O(U(\A)) \to  \OC$, $f\mapsto [f]$, the projection.
The algebra
$ \OC$ is finite-dimensional, if all critical points are isolated.  In that case,
$\OC = \oplus_{u \in C_{\A,a}}  \OC_{u}$,
where $ \OC_{u}$ is the local algebra corresponding to the point $u$.

\begin{lem}
[{\cite[Lemma 2.5]{V5}}]
\label{lem f_j generate}
If the algebra  $ \OC$ is finite-dimensional,
then the elements  $[1/f_j]$, $j\in J$, generate $  \OC$.
\end{lem}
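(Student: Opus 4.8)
The plan is to show that the classes $[1/f_j]$, $j\in J$, generate the finite-dimensional algebra $\OC$. The key input is Lemma \ref{lem 1/f separate}, which says that the functions $(1/f_j)_{j\in J}$ separate points of $U(\A)$. Since all critical points are isolated and $\OC$ is finite-dimensional, we have the decomposition $\OC=\oplus_{u\in C_{\A,a}}\OC_u$ into local algebras, so it suffices to handle the generation statement one local algebra at a time, after arranging for the $[1/f_j]$ to detect and separate the distinct critical points.

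First I would let $B\subset\OC$ denote the (unital) subalgebra generated by the classes $[1/f_j]$, $j\in J$, and show $B=\OC$. The strategy is to invoke the version of the Chinese Remainder / Nakayama argument appropriate to an Artinian algebra: a subalgebra $B$ containing the identity and separating the maximal ideals (equivalently, the points of $\Spect\OC=C_{\A,a}$) and surjecting onto each associated graded local piece must be the whole algebra. Concretely, because the $1/f_j$ separate the finitely many points of $C_{\A,a}\subset U(\A)$ by Lemma \ref{lem 1/f separate}, the images of $1+\sum c_j[1/f_j]$ can be used to produce, for each $u\in C_{\A,a}$, an idempotent $e_u\in B$ picking out the local factor $\OC_u$; thus the decomposition $\OC=\oplus_u\OC_u$ is already visible inside $B$, and it remains to prove $B\cap\OC_u=\OC_u$ for each $u$.

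Next, to settle the local statement, I would work in the local algebra $\OC_u=\mc O_{U(\A),u}/I_{\A,a}$ with maximal ideal $\frak m_u$. By the Nakayama lemma, the elements $[1/f_j]-1/f_j(u)$ generate $\frak m_u/\frak m_u^2$ would already force them to generate $\frak m_u$ as an ideal over the generated subalgebra; more robustly, since $\mc O_{U(\A),u}$ is the localization of the coordinate ring of the smooth affine variety $U(\A)$ and the functions $1/f_j$ give a closed embedding of $U(\A)$ (they separate points and, where their differentials are concerned, their logarithmic derivatives $\om_j=df_j/f_j$ span the cotangent space at each point of $U(\A)$), the $1/f_j$ generate the maximal ideal $\frak m_u$ of the local ring up to the defining ideal. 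Quotienting by $I_{\A,a}$, the classes $[1/f_j]$ then generate $\frak m_u\subset\OC_u$, and since $\OC_u$ is a finite-dimensional local algebra with residue field $\C$, the subalgebra generated by $1$ and $\frak m_u$ is all of $\OC_u$.

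The main obstacle I anticipate is the differential (cotangent) separation step rather than the point separation step: Lemma \ref{lem 1/f separate} gives injectivity on points, but to generate each \emph{local} algebra one must also know that the differentials $d(1/f_j)=-\om_j/f_j$ span the cotangent space $T^*_uU(\A)$ at every $u$, which amounts to the statement that at each point of the complement the forms $\om_j$ span $T^*_u\C^k$. This is where essentiality of $\A$ and the nondegeneracy built into the arrangement must be used. Once the $1/f_j$ are shown to give an immersion (in fact a closed embedding of the affine variety $U(\A)$ into affine space), the generation of each local algebra, and hence of $\OC$, follows by the standard Artinian-algebra argument sketched above.
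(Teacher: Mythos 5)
Your argument is correct, but note first that this paper does not prove the lemma at all: it quotes it from \cite[Lemma 2.5]{V5}, and the argument there is a short trick of a completely different nature. Since $\A$ is essential, the linear parts $g_j=f_j-z_j$ span $(\C^k)^*$, so each coordinate $t_i$ is a linear combination of the $f_j$ and constants; hence $\mc O(U(\A))=\C[t_i,1/f_j]$ is generated by the elements $f_j$ and $1/f_j$, $j\in J$, and $\OC$ is generated by the classes $[f_j]$ and $[1/f_j]$. Now each $[1/f_j]$ is a unit of $\OC$ with inverse $[f_j]$, and in a finite-dimensional algebra the inverse of a unit $x$ is a polynomial in $x$: take a relation $c_mx^m+\dots+c_1x+c_0=0$ of minimal degree; if $c_0=0$, multiplying by $x^{-1}$ lowers the degree, so $c_0\ne0$ and $x^{-1}=-c_0^{-1}(c_mx^{m-1}+\dots+c_1)$. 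Thus $[f_j]$ already lies in the unital subalgebra generated by $[1/f_j]$, and the lemma follows with no decomposition into local algebras, no idempotents, and no Nakayama argument. Your route buys more structural information (the idempotents $e_u\in B$ and generation of each local algebra $\OC_u$), but for the statement itself the inverse-of-a-unit trick is both shorter and more elementary.

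Within your own approach, the global-to-local reduction (idempotents from Lemma \ref{lem 1/f separate} by interpolation in a generic combination $\sum_j c_j[1/f_j]$, then the Artinian filtration argument in each $\OC_u$) is sound, but the step you explicitly leave conditional --- that the differentials $d(1/f_j)$ span the cotangent space --- should not be left as an anticipated obstacle, since it is exactly where essentiality enters and it takes one line: $d(1/f_j)=-\,dg_j/f_j^2$, the $dg_j$ span $(\C^k)^*$ because the $g_j$ do, and $f_j\ne0$ on $U(\A)$, so the $d(1/f_j)$ span the cotangent space at every point of $U(\A)$, in particular at every critical point. With that line supplied, your proof is complete; also note that for your local step an injective immersion suffices, so the closedness of the embedding you invoke is not actually needed.
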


Let  $\mc R_{u} :  \OC_{u} \to \C$ be the Grothendieck residue,
\bean
\label{res map}
[f] \ \mapsto \ \frac 1{(2\pi i)^k}\,\Res_{u}
\ \frac{ f}{\prod_{i=1}^k\, \frac{\der \Phi_{\A,a}}{\der t_i}}
=\frac{1}{(2\pi i)^k}\int_{\Gamma_u}
\frac{f\ dt_1\wedge\dots\wedge dt_k}{\prod_{i=1}^k \frac{\der \Phi_{\A,a}}{\der t_i}}\ .
\eean
Here  $\Gamma_u$ is the real $k$-cycle  located in a small neighborhood of $u$
and defined by the equations
$|\frac{\der \Phi_{\A,a}}{\der t_i}|=\epsilon_i,\ i=1,\dots,k$,
where  $\epsilon_s$ are sufficiently small positive numbers. The cycle is oriented
 by the condition
$d\arg \frac{\der \Phi_{\A,a}}{\der t_1}\wedge\dots\wedge d\arg \frac{\der \Phi_{\A,a}}{\der t_k} > 0$, see \cite{GH}.

Denote by $[\Hess_{\A,a}]_u $ the image
of the Hessian in  $\OC_{u}$.  We have
\bean
\label{res of Hess}
\mc R_u : [\Hess_{\A,a}]_u  \mapsto \mu_u ,
\eean
where $\mu_u = \dim_\C  \OC_u$, the {\it Milnor number}, see  \cite{AGV}.
Define the   bilinear form on $\OC_u$,
\bean
\label{Gr form}
( [f], [g])_{u}\ =\ \mc R_u ([f][g])\ .
\eean
If  $ \OC$ is finite-dimensional,
we define the {\it residue bilinear form}  $(\,,\,)_{C_{\A,a}}$ on $\OC$  as
\bea
(\,,\,)_{C_{\A,a}} = \oplus_{u\in C_{\A,a}} (\,,\,)_{u}\,.
\eea
This form is nondegenerate, see \cite{AGV}, and $([f][g],[h])_{C_{\A,a}}=([f],[g][h])_{C_{\A,a}}$ for all $[f],[g],[h]\in \OC$. In other words,
the pair $(\OC, (\,,\,)_{C_{\A,a}})$ is a {\it Frobenius algebra}.

\subsection{Canonical element}
\label{Special}

A differential $k$-form $H \in {\OS}^k(\A)$ can be written as
\linebreak
$H  =  f_H  dt_1 \wedge \dots \wedge dt_k$,
where $f_H\in \mc O(U(\A))$.
Define a map $\on{F} : U(\A) \to \FF^k(\A)$ which sends $u\in U(\A)$ to the element $\on{F}(u)\in \FF^k(\A)$ such that
$\langle H, \on{F}(u) \rangle = f_H(u)$ for any  $H \in \OS^k(\A)$.
The map $\on{F}$ is called the {\it specialization map}, the vector $\on{F}(u)$
is called the {\it special vector} at $u$, see \cite{V4}. In the theory of quantum integrable systems special
vectors are called the {\it Bethe vectors}.

Let $(F_m)_{m\in M}$ be a basis of $\FF^k(\A)$ and $(H^m)_{m\in M} \subset \OS^k(\A)$ the dual basis.
We have $H^m = f_{H^m}dt_1\wedge\dots\wedge dt_k$ for some $f_{H^m} \in \mc O(U(\A))$.
The element
\bean
\label{spec elt}
E = {\sum}_{m\in M} f_{H^m} \otimes F_m  \ \in\ \mc O(U(\A))\otimes \FF^k(\A)
\eean
is called the {\it canonical element}.
For  $u \in U(\A)$, we have
\bean
\label{formula for v(t)}
\on{F}(u) = {\sum}_{m\in M} f_{H^m}(u) F_m .
\eean
Let $[E]$ be the image of the canonical element in $ \OC \otimes\FF^k(\A)$.

\begin{lem}
[{\cite[Lemma 2.6]{V4}}]

\label{lem v sing}
We have $[E] \in  \OC \otimes \on{Sing}_a \FF^k(\A)$.

\end{lem}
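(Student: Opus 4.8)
The plan is to exploit the reproducing property of the canonical element $E$ together with the single arrangement-theoretic identity $\om^{(a)}=d\Phi_{\A,a}$, which forces the coefficients produced by multiplication by $\om^{(a)}$ to land in the Jacobian ideal $I_{\A,a}$.

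First I would record the reproducing property. Since $(H^m)_{m\in M}$ is the basis of $\OS^k(\A)$ dual to $(F_m)_{m\in M}$, every $H\in\OS^k(\A)$ satisfies $H=\sum_{m}\langle H,F_m\rangle\,H^m$, and therefore $f_H=\sum_{m}\langle H,F_m\rangle\,f_{H^m}$ by linearity of the coefficient map $H\mapsto f_H$. Pairing $E$ with $H$ in the $\FF^k(\A)$-slot then gives $\langle H,E\rangle=\sum_{m}f_{H^m}\langle H,F_m\rangle=f_H$, so that after projecting to $\OC$ we obtain $\langle H,[E]\rangle=[f_H]$. Now recall that $\on{Sing}_a\FF^k(\A)$ is the annihilator of $\on{Im}\,d^{(a)}$ under the perfect pairing $\OS^k(\A)\otimes\FF^k(\A)\to\C$. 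Under the canonical identification $\OC\otimes\FF^k(\A)\cong\Hom_\C(\OS^k(\A),\OC)$ (valid since $\OS^k(\A)$ is finite-dimensional and $\FF^k(\A)=\OS^k(\A)^*$), the element $[E]$ corresponds to the map $H\mapsto[f_H]$, while the subspace $\OC\otimes\on{Sing}_a\FF^k(\A)$ corresponds exactly to those maps $\OS^k(\A)\to\OC$ that vanish on $\on{Im}\,d^{(a)}$. Hence it suffices to prove $\langle d^{(a)}\eta,[E]\rangle=[f_{d^{(a)}\eta}]=0$ in $\OC$ for every $\eta\in\OS^{k-1}(\A)$.

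The core step is the computation of $f_{d^{(a)}\eta}$. Under the identification $\OS(\A)=\bar{\OS}(\A)$, write $\eta$ as a differential $(k-1)$-form $\eta=\sum_{l=1}^k g_l\,dt_1\wedge\dots\wedge\widehat{dt_l}\wedge\dots\wedge dt_k$ with coefficients $g_l\in\mc O(U(\A))$; regularity on $U(\A)$ is guaranteed because elements of $\OS(\A)$ have poles only along the hyperplanes. Using $\om^{(a)}=d\Phi_{\A,a}=\sum_{i=1}^k\frac{\der\Phi_{\A,a}}{\der t_i}\,dt_i$, I compute
\be
d^{(a)}\eta=\om^{(a)}\wedge\eta=\Big(\sum_{i=1}^k(-1)^{i-1}\,\frac{\der\Phi_{\A,a}}{\der t_i}\,g_i\Big)\,dt_1\wedge\dots\wedge dt_k .
\ee
Thus $f_{d^{(a)}\eta}=\sum_{i=1}^k(-1)^{i-1}g_i\,\frac{\der\Phi_{\A,a}}{\der t_i}$ is a combination of the generators $\frac{\der\Phi_{\A,a}}{\der t_i}$ of $I_{\A,a}$, so $[f_{d^{(a)}\eta}]=0$ in $\OC=\mc O(U(\A))/I_{\A,a}$. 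Together with the reduction above, this yields $[E]\in\OC\otimes\on{Sing}_a\FF^k(\A)$.

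The only point requiring care — and where I expect the main (if modest) obstacle — is the bookkeeping of the first paragraph: upgrading the characterization of $\on{Sing}_a\FF^k(\A)$ from a statement about $\C$-valued pairings to the $\OC$-valued setting, so that the identification $\OC\otimes\FF^k(\A)\cong\Hom_\C(\OS^k(\A),\OC)$ really does carry $\OC\otimes\on{Sing}_a\FF^k(\A)$ onto the maps annihilating $\on{Im}\,d^{(a)}$. This is finite-dimensional linear algebra and rests only on the perfectness of the pairing $\OS^k(\A)\otimes\FF^k(\A)\to\C$ and the double-annihilator identity, but it deserves to be spelled out, since the conclusion asserts membership in a tensor product rather than the vanishing of a single pairing. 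All genuinely arrangement-theoretic content is concentrated in the identity $\om^{(a)}=d\Phi_{\A,a}$ of the second paragraph; notably, no assumption on the critical points being isolated or on nondegeneracy of the contravariant form is needed.
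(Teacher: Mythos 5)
Your proof is correct. The paper itself gives no proof of this lemma --- it is imported from \cite[Lemma 2.6]{V4} --- but your argument (use the dual-basis reproducing property to identify $[E]$ with the map $H\mapsto[f_H]$ and reduce to showing $[f_{d^{(a)}\eta}]=0$, then use $\om^{(a)}=d\Phi_{\A,a}$ to write $f_{\om^{(a)}\wedge\eta}$ as an $\mc O(U(\A))$-combination of the generators $\der\Phi_{\A,a}/\der t_i$ of $I_{\A,a}$) is exactly the standard one, and it is the same mechanism the paper itself invokes in Section \ref{Orthogonal projection} when observing that $[\mc S^{(a)}]$ annihilates $d\FF^{k-1}(\A)$.
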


\begin{thm} [\cite{V4}]
\label{first theorem}
For  $u\in U(\A)$,  we have
\bean
\label{Shap norm}
S^{(a)}(\on{F}(u),\on{F}(u))  =  (-1)^k \Hess_{\A,a} (u).
\eean
Moreover, if $u^1, u^2 \in U(\A)$ are distinct isolated critical points of $\Phi_{\A,a}$,
then the special singular vectors $\on{F}(u^1), \on{F}(u^2)$ are orthogonal,
\bean
\label{oRt}
S^{(a)}(\on{F}(u^1),\on{F}(u^2)) = 0 ,
\eean
cf. \cite{MV,V5}.
\qed
\end{thm}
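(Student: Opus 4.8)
The final statement to prove is Theorem \ref{first theorem}, which asserts two facts about the special vectors $\on{F}(u)$: first, the norm identity $S^{(a)}(\on{F}(u),\on{F}(u)) = (-1)^k \Hess_{\A,a}(u)$ for every $u\in U(\A)$; and second, the orthogonality $S^{(a)}(\on{F}(u^1),\on{F}(u^2))=0$ for distinct isolated critical points.

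\medskip

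The plan is to compute $S^{(a)}(\on{F}(u^1),\on{F}(u^2))$ directly from the explicit formula for the contravariant form and the defining property $\langle H,\on{F}(u)\rangle = f_H(u)$ of the special vector. The starting point is the duality pairing written out on monomials. For a $k$-element subset $I=\{i_1,\dots,i_k\}\subset J$ with hyperplanes in general position, the wedge formula \Ref{wedge I} gives $\om_{i_1}\wedge\dots\wedge\om_{i_k} = \tfrac{d_I}{f_{i_1}\cdots f_{i_k}}\,dt_1\wedge\dots\wedge dt_k$, so that for $H = (H_{i_1},\dots,H_{i_k})$ we have $f_H = d_I/(f_{i_1}\cdots f_{i_k})$. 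By the defining property of $\on{F}(u)$, this means $\langle (H_{i_1},\dots,H_{i_k}),\on{F}(u)\rangle = d_I/\prod_{i\in I} f_i(u)$. Substituting this into the explicit sum
\bean
S^{(a)}(\on{F}(u^1),\on{F}(u^2)) = {\sum}_{I\subset J,\,|I|=k} \Big({\prod}_{i\in I} a_i\Big)\, \frac{d_I}{\prod_{i\in I} f_i(u^1)}\,\frac{d_I}{\prod_{i\in I} f_i(u^2)}
= {\sum}_{I} d_I^2\,{\prod}_{i\in I}\frac{a_i}{f_i(u^1)f_i(u^2)}
\eean
reduces both claims to statements about this manifestly symmetric rational expression.

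\medskip

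Setting $u^1=u^2=u$ collapses the double product to $\prod_{i\in I} a_i/f_i(u)^2$, and comparison with the Hessian formula \Ref{Hess f}, namely $\Hess_{\A,a} = (-1)^k\sum_I d_I^2\prod_{i\in I} a_i/f_i^2$, yields the first identity up to the sign $(-1)^k$ at once. So the norm statement is essentially a restatement of the known Hessian formula. For the orthogonality, I would argue that when $u^1,u^2$ are distinct critical points the cross-sum vanishes. The mechanism I expect is a "separation of variables" argument: at a critical point the relation $\om^{(a)}|_u=0$ holds, i.e. $\sum_j a_j b_j^i/f_j(u)=0$ for each $i=1,\dots,k$, which says the vector $(a_j/f_j(u))_{j\in J}$ lies in the kernel of the $k\times n$ matrix $(b_j^i)$. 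The sum $\sum_I d_I^2\prod_{i\in I} a_i/(f_i(u^1)f_i(u^2))$ is, up to normalization, a Cauchy--Binet expansion that can be recognized as a determinant or a product of linear functionals evaluated on these two kernel vectors; I would expand it via Cauchy--Binet as $\det$ of a $k\times k$ Gram-type matrix built from the two weight vectors, and then use the critical point equations to show this determinant vanishes when the two vectors are linearly related through the arrangement geometry.

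\medskip

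The main obstacle is the orthogonality half. The norm identity is immediate once the pairing is written in coordinates, but showing the cross term vanishes requires genuinely using that $u^1,u^2$ are critical points and not merely points of $U(\A)$. The cleanest route is probably to interpret $S^{(a)}(\on{F}(u^1),\on{F}(u^2))$ through the Grothendieck residue pairing: the canonical element $[E]$ lands in $\OC\otimes\Sing_a\FF^k(\A)$ by Lemma \ref{lem v sing}, and the special vectors at distinct critical points live in the local summands $\OC_{u^1}$ and $\OC_{u^2}$ of the decomposition $\OC=\oplus_u\OC_u$. Since this decomposition is orthogonal for the residue form (residues at distinct points are supported on disjoint cycles $\Gamma_{u^1},\Gamma_{u^2}$), and since the contravariant form restricted to singular vectors is expected to match the residue form up to sign, orthogonality of $\on{F}(u^1)$ and $\on{F}(u^2)$ follows from the orthogonality of different local algebras. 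I would want to verify carefully that $\on{F}(u)$ really is (a representative of) the image of $[E]$ localized at $u$, so that the abstract Frobenius-algebra orthogonality transfers to the combinatorial form $S^{(a)}$; reconciling the direct Cauchy--Binet computation with this residue-theoretic picture is where the real work lies, and the references \cite{MV,V5} suggest the intended proof proceeds along exactly these lines.
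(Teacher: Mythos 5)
Your first half is correct and complete: expanding $S^{(a)}(\on{F}(u),\on{F}(u))$ over unordered independent $k$-subsets, using $\langle (H_{i_1},\dots,H_{i_k}),\on{F}(u)\rangle = d_I/\prod_{i\in I}f_i(u)$ from \Ref{wedge I}, and comparing with \Ref{Hess f} does prove \Ref{Shap norm}. (The paper itself gives no proof of this theorem --- it is imported from \cite{V4} --- so the comparison here is with the standard argument of \cite{V4,MV}, which is exactly the Cauchy--Binet computation you begin.) The gap is in the orthogonality half. You correctly observe that Cauchy--Binet rewrites the cross term as a $k\times k$ determinant,
\be
S^{(a)}(\on{F}(u^1),\on{F}(u^2))=\det_{i,l=1}^k M_{il},
\qquad
M_{il}=\sum_{j\in J}\frac{a_j\,b^i_j\,b^l_j}{f_j(u^1)\,f_j(u^2)}\,,
\ee
but then you only gesture at why this determinant vanishes (``the two vectors are linearly related through the arrangement geometry''), and the kernel vector you propose, $(a_j/f_j(u))_{j\in J}\in\C^n$, is not the relevant one: $M$ is a $k\times k$ matrix, so what is needed is a nonzero vector in $\C^k$ killed by $M$. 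The missing idea is to take $v=u^1-u^2\ne 0$ and use the partial-fraction identity together with $f_j(u^1)-f_j(u^2)=\sum_l b^l_j(u^1_l-u^2_l)$ (the $z_j$ cancel):
\be
\sum_{l=1}^k M_{il}\,(u^1_l-u^2_l)
=\sum_{j\in J}\frac{a_j\,b^i_j\,\bigl(f_j(u^1)-f_j(u^2)\bigr)}{f_j(u^1)\,f_j(u^2)}
=\sum_{j\in J}\frac{a_j\,b^i_j}{f_j(u^2)}-\sum_{j\in J}\frac{a_j\,b^i_j}{f_j(u^1)}=0,
\ee
where the last equality is the criticality of $u^2$ and $u^1$, see \Ref{derivative}. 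Hence $\det M=0$ and \Ref{oRt} follows; note that the same determinant identity at $u^1=u^2$ is precisely \Ref{Hess f}, so both halves of the theorem come from one Cauchy--Binet identity.

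Your fallback route through the Grothendieck residue cannot be used here: the statement that the canonical isomorphism $\mc E$ matches the residue form with $(-1)^kS^{(a)}$ is Theorem \ref{thm alpha}, and the paper's proof of Theorem \ref{thm alpha} in Section \ref{proofs} takes Theorem \ref{first theorem} as input (it is what makes the special vectors an orthogonal basis of $\on{Sing}_a\FF^k(\A)$ and makes $\mc E$ an isomorphism). So deducing \Ref{oRt} from the block-diagonality of the residue form on $\OC=\oplus_u\OC_u$ is circular in this logical arrangement; the ``reconciliation'' you flag as the real work is in fact the theorem you are asked to prove.
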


\subsection{Canonical isomorphism}

Assume that the algebra $ \OC $ is finite-dimensional. Define the  linear map
\bean
\label{map alpha}
\mc E\  : \ \OC \to \sing \FF^k(\A), \qquad
[g] \mapsto ([g],[E])_{C_{\A,a}}.
\eean

\begin{thm}
[\cite{V5}]
\label{thm alpha}
If the weight $a\in (\C^\times)^n$ is unbalanced, then the map $\mc E$ is an isomorphism of vector
spaces. The isomorphism $\mc E$ identifies the residue form on $\OC$ and
the contravariant form on $\sing \FF^k(\A)$ multiplied by $(-1)^k$,
\bean
(f,g)_{C_{\A,a}} = (-1)^kS^{(a)}(\mc E(f),\mc E(g))\qquad \text{for all}\ f,g\in \OC.
\qquad\qquad \qed
\eean

\end{thm}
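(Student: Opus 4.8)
The plan is to prove Theorem \ref{thm alpha} by establishing that the map $\mc E$ is injective and then invoking a dimension count, and separately verifying the form-identification statement, which will turn out to encode the injectivity. The key tools already assembled in the excerpt are: the canonical element $[E] \in \OC \otimes \sing\FF^k(\A)$ (Lemma \ref{lem v sing}), the special vector formula \Ref{formula for v(t)}, the Frobenius property of the residue form, the norm formula \Ref{Shap norm} relating $S^{(a)}$ on special vectors to the Hessian, and the orthogonality relation \Ref{oRt} for special vectors at distinct critical points. Under the unbalanced hypothesis (Lemma \ref{lem crit 1}), all critical points are isolated, so $\OC$ is finite-dimensional and decomposes as $\OC = \oplus_{u\in C_{\A,a}} \OC_u$; moreover $\dim\OC = |\chi(U(\A))| = \dim\sing\FF^k(\A)$ by Lemma \ref{lem crit 1} and Corollary \ref{lem on dim of sing}. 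Since source and target have equal dimension, it suffices to prove the form identity and deduce injectivity from it.

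First I would write both forms explicitly through the canonical element. Writing $[E] = \sum_m [f_{H^m}]\otimes F_m$, the definition \Ref{map alpha} gives $\mc E([g]) = \sum_m ([g][f_{H^m}])_{C_{\A,a}}\, F_m = \sum_m ([g],[f_{H^m}])_{C_{\A,a}}\, F_m$, using the Frobenius property. Then I would compute $S^{(a)}(\mc E(f),\mc E(g))$ by expanding in the basis $(F_m)$ and its dual. The strategy is to reduce the bilinear-form computation to a statement about the ``square'' of the canonical element: one expects an identity of the shape $S^{(a)}([E],[E]) = (-1)^k [\Hess_{\A,a}]$ as an element of $\OC$ (obtained from \Ref{Shap norm} by specializing along the fiber and using \Ref{formula for v(t)}), together with the fact, encoded by \Ref{res of Hess}, that $\mc R_u[\Hess_{\A,a}]_u = \mu_u$. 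The point is that the residue form is computed locally at each $u$, and \Ref{Shap norm} says the value of the contravariant form on the special vector at $u$ is exactly $(-1)^k\Hess_{\A,a}(u)$, while \Ref{oRt} kills the cross terms between distinct critical points so the form is block-diagonal over $u\in C_{\A,a}$.

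Concretely, I would proceed block by block over the decomposition $\OC=\oplus_u\OC_u$. For $f,g$ supported at a single critical point $u$, one pairs $f$ and $g$ against the local part of the canonical element and uses that the residue form $(\,,\,)_u$ is the Grothendieck residue, so that $(-1)^k S^{(a)}(\mc E(f),\mc E(g))$ should reassemble as $\mc R_u$ applied to the product $[f][g]$ weighted by $[\Hess_{\A,a}]$ — the Hessian appearing precisely because of \Ref{Shap norm}. The crucial algebraic fact making this work is that in a Frobenius algebra with residue pairing, the Hessian (the image of the top form under the residue) acts as the ``volume element,'' and pairing against the special vector implements multiplication in $\OC_u$; this is where \Ref{res of Hess} enters to normalize correctly. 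Combining the local contributions and using the orthogonality \Ref{oRt} to discard off-diagonal blocks yields the global identity $(f,g)_{C_{\A,a}} = (-1)^k S^{(a)}(\mc E(f),\mc E(g))$ for all $f,g\in\OC$.

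Finally, injectivity of $\mc E$ follows from nondegeneracy of the residue form on $\OC$ (stated in the excerpt, cf. \cite{AGV}): if $\mc E(f)=0$ then $(f,g)_{C_{\A,a}} = (-1)^k S^{(a)}(\mc E(f),\mc E(g)) = 0$ for all $g$, forcing $f=0$. An injective linear map between spaces of equal finite dimension is an isomorphism, so $\mc E$ is an isomorphism and the displayed form identity holds, completing the proof. I expect the main obstacle to be the bookkeeping in the second step: carefully passing from the pointwise norm identity \Ref{Shap norm} for special vectors to the algebra-level identity involving $[\Hess_{\A,a}]$, and verifying that the residue pairing of $f,g$ against the canonical element correctly reproduces the local Grothendieck residue of $[f][g]$ without extraneous factors. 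Getting the sign $(-1)^k$ and the Hessian normalization to match \Ref{res of Hess} exactly is the delicate point; the orthogonality \Ref{oRt} is what guarantees the computation localizes cleanly at each critical point.
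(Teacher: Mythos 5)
Your overall strategy (prove the form identity first, then deduce injectivity from nondegeneracy of the residue form and conclude by the dimension count) is sound, and in the special case when all critical points are nondegenerate your computation is exactly the paper's: there $\mc R_u$ is evaluation at $u$ divided by $\Hess_{\A,a}(u)$, so \Ref{Shap norm} and \Ref{oRt} immediately give $S^{(a)}(\mc E(f),\mc E(g))=(-1)^k\sum_{u}f(u)g(u)/\Hess_{\A,a}(u)=(-1)^k(f,g)_{C_{\A,a}}$. The gap is in your central step, where you run this ``block by block'' argument directly for an arbitrary unbalanced weight, i.e.\ at isolated but possibly \emph{degenerate} critical points. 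Unwinding the definitions, $S^{(a)}(\mc E(f),\mc E(g))$ equals the double Grothendieck residue $(\mc R_u\otimes\mc R_v)\big(f(u)\,g(v)\,S^{(a)}(\on{F}(u),\on{F}(v))\big)$, so what you actually need is control of the two-variable kernel $S^{(a)}(\on{F}(u),\on{F}(v))$ modulo the ideal $I_{\A,a}\otimes\mc O+\mc O\otimes I_{\A,a}$. But \Ref{Shap norm} gives only its restriction to the diagonal $u=v$, and \Ref{oRt} only its values at pairs of distinct critical points; when some Milnor number $\mu_u$ exceeds $1$, the residue $\mc R_u$ depends on jets, not on values, so these data do not determine the double residue. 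A one-variable model with $\frac{\der\Phi}{\der t}=t^2$ shows this concretely: the kernels $uv$ and $u^2$ agree on the diagonal, yet $(\mc R\otimes\mc R)(uv)=1$ while $(\mc R\otimes\mc R)(u^2)=0$. For the same reason \Ref{oRt} does not kill the off-diagonal blocks when one of the two critical points is degenerate. So the step you call ``bookkeeping'' is precisely the hard content: it amounts to a Scheja--Storch type statement that the kernel represents, up to sign, the diagonal class of the Frobenius algebra $\OC_u$, and this does not follow from the tools you list.

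The paper avoids this difficulty entirely. It first proves the statement for \emph{generic} $a$, where by Theorem \ref{thm V,OT,S} all critical points are nondegenerate, the special vectors $(\on{F}(u))_{u\in C_{\A,a}}$ form a basis of $\on{Sing}_a\FF^k(\A)$, and one computes on the basis of delta-functions $g_u$ using $\mc E(g_u)=\on{F}(u)/\Hess_{\A,a}(u)$; it then deduces the unbalanced case by deforming the weight $a$ to a generic one and using that $\on{Sing}_a\FF^k(\A)$, $S^{(a)}$, $\OC$, the residue form, and the maps $\mc E$, $[\mc S^{(a)}]$ all depend continuously on the deformation. Your argument becomes correct if you adopt the same structure: carry out your local computation only in the nondegenerate case (where it works verbatim) and replace the block-by-block analysis at degenerate points by this deformation and continuity step.
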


The map $\mc E$ is called the  {\it canonical} isomorphism. We provide a proof of Theorem \ref{thm alpha} in Section
\ref{proofs}.

\begin{cor}[\cite{V5}]
\label{cor nondeg}

If the weight $a\in (\C^\times)^n$ is unbalanced, then the
 restriction of the contravariant form $S^{(a)}$ to the subspace
$\sing \FF^k(\A)$ is nondegenerate.
\qed
\end{cor}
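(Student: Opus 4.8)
The plan is to deduce Corollary~\ref{cor nondeg} directly from Theorem~\ref{thm alpha}, since the canonical isomorphism $\mc E$ is precisely the bridge that transports the (automatically nondegenerate) residue form on $\OC$ onto the contravariant form on $\sing\FF^k(\A)$. Concretely, Theorem~\ref{thm alpha} asserts two things under the hypothesis that $a$ is unbalanced: first, that $\mc E : \OC \to \sing\FF^k(\A)$ is an isomorphism of vector spaces, and second, that $\mc E$ intertwines the two forms up to the sign $(-1)^k$, namely $(f,g)_{C_{\A,a}} = (-1)^k S^{(a)}(\mc E(f),\mc E(g))$ for all $f,g\in\OC$.

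The key observation is that nondegeneracy is preserved under pullback by a linear isomorphism, up to a nonzero scalar. First I would recall that the residue form $(\,,\,)_{C_{\A,a}}$ on $\OC$ is nondegenerate; this is exactly the statement recorded in the Residue subsection, where the pair $(\OC,(\,,\,)_{C_{\A,a}})$ is noted to be a Frobenius algebra, with nondegeneracy attributed to \cite{AGV}. Since $a$ is unbalanced, Lemma~\ref{lem crit 1} guarantees that all critical points are isolated, so $\OC$ is finite-dimensional and the residue form is genuinely defined and nondegenerate. Then I would argue as follows: suppose $w\in\sing\FF^k(\A)$ satisfies $S^{(a)}(w,w')=0$ for every $w'\in\sing\FF^k(\A)$. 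Because $\mc E$ is surjective, write $w=\mc E(f)$ and let $w'=\mc E(g)$ range over all of $\sing\FF^k(\A)$ as $g$ ranges over $\OC$. The intertwining identity gives $(f,g)_{C_{\A,a}} = (-1)^k S^{(a)}(\mc E(f),\mc E(g)) = (-1)^k S^{(a)}(w,w') = 0$ for every $g\in\OC$. By nondegeneracy of the residue form, $f=0$ in $\OC$, and hence $w=\mc E(f)=0$. This shows the restriction of $S^{(a)}$ to $\sing\FF^k(\A)$ has trivial kernel, i.e.\ it is nondegenerate.

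There is essentially no hard part here: the corollary is a formal consequence of the isomorphism and the form-intertwining property packaged in Theorem~\ref{thm alpha}, together with the already-established nondegeneracy of the residue form. The only point requiring a moment's care is to confirm that the hypothesis ``$a$ unbalanced'' simultaneously supplies finite-dimensionality of $\OC$ (via Lemma~\ref{lem crit 1}, so that the residue form exists and is nondegenerate) and the conclusions of Theorem~\ref{thm alpha} (which has the same hypothesis). Since both invoke exactly the unbalanced condition, the argument closes without any additional assumptions. I expect the proof in the paper to be a one- or two-line remark of precisely this kind, observing that a nondegenerate form transported by a linear isomorphism remains nondegenerate.
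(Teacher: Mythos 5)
Your proposal is correct and matches the paper's intended argument: the corollary appears in the paper with no separate proof (just \qed) precisely because it is the formal consequence of Theorem \ref{thm alpha} that you spell out, namely that the isomorphism $\mc E$ transports the nondegenerate residue form $(\,,\,)_{C_{\A,a}}$ onto $(-1)^k S^{(a)}|_{\sing \FF^k(\A)}$. Your added care about finite-dimensionality of $\OC$ via Lemma \ref{lem crit 1} under the unbalanced hypothesis is exactly the right supporting observation and is consistent with the paper's setup.
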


On the restriction of $S^{(a)}$ to the subspace  $\sing \FF^k(\A)$ see also \cite{FaV}.

 If all critical points  are nondegenerate, then
\bean
\label{EX}
\mc E\ :\ [g]\
 \mapsto\ {\sum}_{u\in C_{\A,a}} {\sum}_m \frac {g(u)f_{H^m}(u)}{\Hess_{\A,a}(u)} F_m
= {\sum}_{u \in C_{\A,a}}  \frac { g(u)}{\Hess_{\A,a}(u)} \on{F}(u)\,,
\eean
see \Ref{res of Hess}.

\begin{rem}
If the weight $a\in(\C^\times)^n$ is unbalanced, then the
 canonical isomorphism $\mc E$ induces a commutative associative algebra structure
on $\sing_a \FF^k(\A)$. Together with the contravariant form  $S^{(a)}|_{\on{Sing}_a\FF^k}$  it is a Frobenius algebra.
The algebra of multiplication operators on $\sing_a \FF^k(\A)$ is an analog of the {\it Bethe algebra} in the theory of quantum integrable models,
see, for example, \cite{MTV1, V5}.

\end{rem}

\subsection{Orthogonal projection}
\label{Orthogonal projection}

\begin{lem}
\label{lem orth Pr}

It the weight $a\in(\C^\times)^n$ is unbalanced, then $d \FF^{k-1}(\A)= \on{Sing}_a\FF^k(\A)^\perp$,
where $d \FF^{k-1}(\A)\subset \FF^k(\A)$ is the image of the differential defined by \Ref{d in F} and
$\on{Sing}_a\FF^k(\A)^\perp \subset \FF^k(\A)$ is the orthogonal complement to $\on{Sing}_a\FF^k(\A)$
with respect to $S^{(a)}$.

\end{lem}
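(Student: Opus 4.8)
The plan is to prove the inclusion $d\FF^{k-1}(\A)\subseteq \on{Sing}_a\FF^k(\A)^\perp$ directly, and then to force equality by a dimension count supplied by the nondegeneracy results available for unbalanced weights. Throughout I will use three ingredients from the preceding subsections: the perfect pairing $\langle\,,\,\rangle:\OS^k(\A)\otimes\FF^k(\A)\to\C$ identifying $\OS^k(\A)$ with $\FF^k(\A)^*$; the description of the contravariant form as $S^{(a)}(F_1,F_2)=\langle\mc S^{(a)}(F_1),F_2\rangle$; and the defining property that $F\in\on{Sing}_a\FF^k(\A)$ precisely when $\langle d^{(a)}\eta,F\rangle=0$ for every $\eta\in\OS^{k-1}(\A)$.

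First I would establish the inclusion. Take $G\in\FF^{k-1}(\A)$ and $F\in\on{Sing}_a\FF^k(\A)$. Using the definition of $S^{(a)}$ and then Lemma \ref{lem hom}, which asserts $\mc S^{(a)}\circ d=d^{(a)}\circ\mc S^{(a)}$, I get
\[
S^{(a)}(dG,F)=\langle\mc S^{(a)}(dG),F\rangle=\langle d^{(a)}\mc S^{(a)}(G),F\rangle .
\]
Since $\mc S^{(a)}(G)\in\OS^{k-1}(\A)$, the element $d^{(a)}\mc S^{(a)}(G)$ lies in the image of $d^{(a)}:\OS^{k-1}(\A)\to\OS^k(\A)$, which $F$ annihilates by the definition of singularity. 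Hence $S^{(a)}(dG,F)=0$, and therefore $d\FF^{k-1}(\A)\subseteq\on{Sing}_a\FF^k(\A)^\perp$.

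Next I would compare dimensions. Because $\A$ is essential in $\C^k$ there are no edges of codimension exceeding $k$, so $\FF^{k+1}(\A)=0$ and the differential $d:\FF^k(\A)\to\FF^{k+1}(\A)$ vanishes; thus $\dim d\FF^{k-1}(\A)=\dim\FF^k(\A)-\dim H^k(\FF(\A),d)=\dim\FF^k(\A)-|\chi(U(\A))|$ by Theorem \ref{them cooh F}. On the other side, Corollary \ref{cor nondeg} gives that the restriction of $S^{(a)}$ to $\on{Sing}_a\FF^k(\A)$ is nondegenerate for unbalanced $a$; consequently the linear map $\FF^k(\A)\to\bigl(\on{Sing}_a\FF^k(\A)\bigr)^*$ induced by $S^{(a)}$ is onto, with kernel $\on{Sing}_a\FF^k(\A)^\perp$, whence $\dim\on{Sing}_a\FF^k(\A)^\perp=\dim\FF^k(\A)-\dim\on{Sing}_a\FF^k(\A)$. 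Finally $\dim\on{Sing}_a\FF^k(\A)=|\chi(U(\A))|$ by Corollary \ref{lem on dim of sing}. The two subspaces then have equal dimension, and combined with the inclusion this yields the asserted equality.

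The only point needing care is the compatibility of the hypotheses: Corollary \ref{lem on dim of sing} (and Theorem \ref{thm Shap nondeg} behind it) is stated under the assumption that no dense edge has weight zero, whereas here $a$ is assumed merely unbalanced. I would verify that unbalancedness implies this weaker condition, which holds because every finite dense edge $X_\al$ of $\A$ is again a dense edge of the projective arrangement $\A^\vee$ carrying the same weight $a_\al=\sum_{j\in J_\al}a_j$ (the hyperplane $H_\infty$ does not contain $X_\al$, so the subarrangement through $X_\al$ is unchanged in $\A^\vee$), and unbalancedness forces $a_\al\ne0$. Granting this bookkeeping, both Corollaries apply and the argument closes. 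The computations are routine; the only mild obstacle is reconciling the two weight conditions and confirming $\FF^{k+1}(\A)=0$, so that the Euler characteristic genuinely controls $\dim d\FF^{k-1}(\A)$.
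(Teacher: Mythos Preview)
Your argument is correct and follows the same two-step strategy as the paper: first the inclusion $d\FF^{k-1}(\A)\subseteq\on{Sing}_a\FF^k(\A)^\perp$ via Lemma~\ref{lem hom} and the definition of singular vectors, then equality by a dimension count using Theorem~\ref{them cooh F} and Corollary~\ref{lem on dim of sing}. You are in fact more careful than the paper on two points the paper leaves implicit: you invoke Corollary~\ref{cor nondeg} to justify $\dim\on{Sing}_a\FF^k(\A)^\perp=\dim\FF^k(\A)-\dim\on{Sing}_a\FF^k(\A)$, and you check that the ``unbalanced'' hypothesis implies the ``no dense edge has weight zero'' hypothesis needed for Corollary~\ref{lem on dim of sing}.
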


\begin{proof}
We have $d \FF^{k-1}(\A) \subset\on{Sing}_a\FF^k(\A)^\perp$ by Lemma \ref{lem hom} and the definition of
$\on{Sing}_a\FF^k(\A)$. But  $\dim d \FF^{k-1}(\A) = \dim \on{Sing}_a\FF^k(\A)^\perp$ by Theorem
\ref{them cooh F} and Corollary \ref{lem on dim of sing}.
\end{proof}

\begin{cor}
\label{cor ISo}
It the weight $a\in(\C^\times)^n$ is unbalanced, the orthogonal projection
 $\pi^\perp : \FF^k(\A)\to \on{Sing}_a\FF^k(\A)$ establishes the isomorphism
$H^k(\FF(\A),d) \cong \on{Sing}_a\FF^k(\A)$.
\end{cor}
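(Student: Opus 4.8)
The plan is to deduce Corollary \ref{cor ISo} directly from Lemma \ref{lem orth Pr} together with the standard fact that $\on{Sing}_a\FF^k(\A)$ is itself a complement to $\on{Sing}_a\FF^k(\A)^\perp$, given that $S^{(a)}$ restricts nondegenerately to it. The key orthogonal decomposition to establish is
\begin{equation*}
\FF^k(\A) = \on{Sing}_a\FF^k(\A) \oplus \on{Sing}_a\FF^k(\A)^\perp .
\end{equation*}
First I would invoke Corollary \ref{cor nondeg}, which tells us (under the unbalanced hypothesis) that $S^{(a)}$ restricted to $\on{Sing}_a\FF^k(\A)$ is nondegenerate. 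A nondegenerate restriction of a symmetric bilinear form to a subspace $W$ forces $W \cap W^\perp = 0$, and combined with the dimension count $\dim W + \dim W^\perp = \dim \FF^k(\A)$ this yields the internal direct-sum decomposition above. This is the elementary linear-algebra backbone of the argument.

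Next I would identify $\on{Sing}_a\FF^k(\A)^\perp$ with $d\,\FF^{k-1}(\A)$ by Lemma \ref{lem orth Pr}. Substituting this into the decomposition gives
\begin{equation*}
\FF^k(\A) = \on{Sing}_a\FF^k(\A) \oplus d\,\FF^{k-1}(\A) .
\end{equation*}
Since Theorem \ref{them cooh F} gives $H^p(\FF(\A),d)=0$ for $p\ne k$, the space of $k$-cocycles is all of $\FF^k(\A)$ (there is no $\FF^{k+1}$ obstruction in the relevant range, or more precisely the cohomology in degree $k$ is the quotient by coboundaries), so $H^k(\FF(\A),d) = \FF^k(\A)/d\,\FF^{k-1}(\A)$. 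The decomposition then shows that the composite
\begin{equation*}
\on{Sing}_a\FF^k(\A) \hookrightarrow \FF^k(\A) \twoheadrightarrow \FF^k(\A)/d\,\FF^{k-1}(\A) = H^k(\FF(\A),d)
\end{equation*}
is an isomorphism, because $\on{Sing}_a\FF^k(\A)$ is a complement to the coboundary space $d\,\FF^{k-1}(\A)$. Dually, the orthogonal projection $\pi^\perp : \FF^k(\A) \to \on{Sing}_a\FF^k(\A)$ associated to this decomposition has kernel exactly $d\,\FF^{k-1}(\A)$, hence descends to and realizes the inverse isomorphism $H^k(\FF(\A),d) \xrightarrow{\sim} \on{Sing}_a\FF^k(\A)$.

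I do not expect a serious obstacle here, since the corollary is essentially a repackaging of Lemma \ref{lem orth Pr} and Corollary \ref{cor nondeg}. The one point requiring a little care is the verification that $\pi^\perp$, which is defined using the form $S^{(a)}$, genuinely agrees with the algebraic splitting coming from the direct-sum decomposition: one must check that the projection annihilates $d\,\FF^{k-1}(\A)$ and is the identity on $\on{Sing}_a\FF^k(\A)$, both of which follow immediately from $d\,\FF^{k-1}(\A) = \on{Sing}_a\FF^k(\A)^\perp$ and the definition of orthogonal projection onto a subspace on which the form is nondegenerate. The only mild subtlety worth a sentence is that the orthogonal projection onto $\on{Sing}_a\FF^k(\A)$ is well defined precisely because $S^{(a)}$ is nondegenerate there (Corollary \ref{cor nondeg}); without that nondegeneracy the projection would not be canonically defined.
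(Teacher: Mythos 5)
Your proposal is correct and follows exactly the route the paper intends: the paper states this corollary without a separate proof, as an immediate consequence of Lemma \ref{lem orth Pr} (identifying $\on{Sing}_a\FF^k(\A)^\perp$ with $d\,\FF^{k-1}(\A)$) combined with the nondegeneracy of $S^{(a)}$ on $\on{Sing}_a\FF^k(\A)$ from Corollary \ref{cor nondeg}, which gives the splitting $\FF^k(\A)=\on{Sing}_a\FF^k(\A)\oplus d\,\FF^{k-1}(\A)$ and hence the descent of $\pi^\perp$ to an isomorphism on $H^k(\FF(\A),d)=\FF^k(\A)/d\,\FF^{k-1}(\A)$. Your filling-in of the linear algebra (including that $\FF^{k+1}(\A)=0$ so all of $\FF^k(\A)$ consists of cocycles) is exactly the intended argument, so there is nothing to correct.
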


Define the map
\bean
\label{psi map}
[\mc S^{(a)}]\  :\  \FF^k(\A) \to \OC,
\qquad
F \mapsto\in [f]\,,
\eean
where
$\mc S^{(a)}(F) = f dt_1\wedge\dots\wedge dt_k$.
Clearly, $ [\mc S^{(a)}](\on{Sing}_a\FF^k(\A)^\perp) = [\mc S^{(a)}](d \FF^{k-1}(\A))=0$,
since $\om^{(a)}=0$ on $C_{\A,a}$. 
In particular, $[\mc S^{(a)}]$ induces the map
\bean
\label{H^k to OC}
[\mc S^{(a)}]\  :\ H^k(\FF(\A),d)\ \to\ \OC.
\eean

\begin{thm}
\label{1st main thm}
It the weight $a\in(\C^\times)^n$ is unbalanced, then the  map
\bean
\label {OC=Sing}
[\mc S^{(a)}]\big|_{\on{Sing}_a\FF^k(\A)}\  : \ \on{Sing}_a\FF^k(\A)\ \to \ \OC
\eean
is an isomorphism of vector spaces and
\bean
\label{(-1)^k identity}
 \mc E {\circ} [\mc S^{(a)}]\big|_{\on{Sing}_a\FF^k(\A)}=(-1)^k .
 \eean

\end{thm}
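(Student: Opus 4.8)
The plan is to deduce everything from the canonical isomorphism $\mc E$ of Theorem \ref{thm alpha} together with the orthogonal-projection picture of Corollary \ref{cor ISo}. The core identity to establish is \Ref{(-1)^k identity}, namely $\mc E \circ [\mc S^{(a)}]\big|_{\on{Sing}_a\FF^k(\A)} = (-1)^k$; once this holds, the map in \Ref{OC=Sing} is automatically injective, and since $\mc E$ is a bijection between spaces of equal dimension, it is forced to be an isomorphism. So the real content is the composition identity, and the isomorphism statement is a formal consequence.

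First I would fix $F \in \on{Sing}_a\FF^k(\A)$ and unwind both maps explicitly. By definition \Ref{psi map}, $[\mc S^{(a)}](F) = [f]$ where $\mc S^{(a)}(F) = f\,dt_1\wedge\dots\wedge dt_k$. Then by \Ref{map alpha}, $\mc E([f]) = ([f],[E])_{C_{\A,a}}$, an element of $\on{Sing}_a\FF^k(\A)$. The goal is to show this equals $(-1)^k F$. My strategy is to pair against an arbitrary test vector and use nondegeneracy. Specifically, since $S^{(a)}$ restricted to $\on{Sing}_a\FF^k(\A)$ is nondegenerate (Corollary \ref{cor nondeg}), it suffices to check that $S^{(a)}(\mc E([f]), G) = (-1)^k S^{(a)}(F,G)$ for all $G \in \on{Sing}_a\FF^k(\A)$. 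The left side I would rewrite using the second displayed identity in Theorem \ref{thm alpha}, which states $(\phi,\psi)_{C_{\A,a}} = (-1)^k S^{(a)}(\mc E(\phi),\mc E(\psi))$; combined with the fact that $\mc E$ is surjective so $G = \mc E([g])$ for some $[g]$, this converts the contravariant pairing back into a residue pairing $(-1)^k([f],[g])_{C_{\A,a}} = (-1)^k \mc R([f][g])$.

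The crux is then to identify $\mc R([f][g])$ with $S^{(a)}(F, \mc E([g]))$ in a way that telescopes. Here I would invoke the defining property of the canonical element: by \Ref{spec elt} and \Ref{formula for v(t)}, pairing $[E]$ against the Orlik--Solomon side reproduces function values, and $\langle H, \on{F}(u)\rangle = f_H(u)$. The key computational lemma is that for $F \in \on{Sing}_a\FF^k(\A)$ with $\mc S^{(a)}(F) = f\,dt_1\wedge\dots\wedge dt_k$, the contravariant form $S^{(a)}(F, \on{F}(u))$ evaluates to $(-1)^k f(u)$ at critical points $u$; this is the singular analog of \Ref{Shap norm} in Theorem \ref{first theorem} and follows because $S^{(a)}(F,\cdot)$ is represented, under the identification $\OS^k \cong (\FF^k)^*$, by the function $f$ read off from $\mc S^{(a)}(F)$. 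Feeding this through the residue expansion \Ref{EX} of $\mc E$ over the critical points (in the nondegenerate case, then extending to the general isolated case by the Frobenius/residue formalism of Section \ref{Special}) collapses the Hessian factors against $S^{(a)}(\on{F}(u),\on{F}(u)) = (-1)^k\Hess_{\A,a}(u)$ and yields exactly $(-1)^k$ times the matching of $F$ with the residue pairing.

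I expect the main obstacle to be the bookkeeping at a \emph{degenerate} isolated critical point, where one cannot simply sum values over points as in \Ref{EX} but must work with the full local algebra $\OC_u$, the Grothendieck residue, and the canonical element $[E] \in \OC \otimes \on{Sing}_a\FF^k(\A)$ from Lemma \ref{lem v sing}. The clean way around this is to phrase the whole argument at the level of the Frobenius structure: use that $\mc R$ is the Frobenius trace, that $[E]$ is the image of the identity-like element pairing functions to special vectors, and that $[\mc S^{(a)}]$ is essentially the adjoint of the specialization map $\on{F}$ under the two nondegenerate forms. Concretely, I would show $[\mc S^{(a)}]$ and $\mc E$ are mutually inverse up to the sign $(-1)^k$ by checking they are adjoint to each other: the contravariant form intertwines them precisely via the factor recorded in Theorem \ref{thm alpha}. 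Since both $\on{Sing}_a\FF^k(\A)$ and $\OC$ have dimension $|\chi(U(\A))|$ (Corollary \ref{lem on dim of sing} and Lemma \ref{lem crit 1}), establishing one composition as $(-1)^k\cdot\id$ immediately gives that both maps are isomorphisms, completing the proof.
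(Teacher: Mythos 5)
Your route is genuinely different from the paper's, and its skeleton is sound. The paper does not assume Theorem \ref{thm alpha}: it proves both theorems at once, first in the case of generic weight and nondegenerate critical points, by computing $\mc E\circ[\mc S^{(a)}]$ on the basis of special vectors $(\on{F}(u))_{u\in C_{\A,a}}$ and observing that the result, formula \Ref{composition}, is exactly $(-1)^k$ times the orthogonal projection formula \Ref{proj}; it then passes to arbitrary unbalanced weights by deforming the weight and invoking continuity of all objects. You instead take Theorem \ref{thm alpha} (quoted from [V5]) as a black box and reduce everything to an adjointness identity $([\mc S^{(a)}](F),[g])_{C_{\A,a}}=S^{(a)}(F,\mc E([g]))$ together with nondegeneracy of $S^{(a)}$ on $\on{Sing}_a\FF^k(\A)$ (Corollary \ref{cor nondeg}). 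This is legitimate and even has an advantage: the adjointness identity can be proved formally, with no nondegeneracy assumption on the critical points, by expanding $\mc S^{(a)}(F)=\sum_m S^{(a)}(F,F_m)\,H^m$ in the dual basis and using the definition \Ref{spec elt} of the canonical element, which gives $S^{(a)}(F,\mc E([g]))=\sum_m S^{(a)}(F,F_m)\,([g],[f_{H^m}])_{C_{\A,a}}=([g],[\mc S^{(a)}](F)])_{C_{\A,a}}$ whenever $\OC$ is finite-dimensional; so, granted Theorem \ref{thm alpha}, your argument would not need the paper's deformation step at all.

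However, your key computational lemma is stated with a wrong sign, and this is not cosmetic. The correct identity is $S^{(a)}(F,\on{F}(u))=f(u)$, with no factor $(-1)^k$: indeed, under $\OS^k(\A)\cong\FF^k(\A)^*$ one has $S^{(a)}(F,\cdot)=\langle \mc S^{(a)}(F),\cdot\rangle$, and $\langle f\,dt_1\wedge\dots\wedge dt_k,\on{F}(u)\rangle=f(u)$ by the very definition of $\on{F}(u)$ — which is precisely the justification you yourself give, so your stated sign contradicts your own proof of the lemma. (The factor $(-1)^k$ lives elsewhere, in the norm formula $S^{(a)}(\on{F}(u),\on{F}(u))=(-1)^k\Hess_{\A,a}(u)$ of Theorem \ref{first theorem}.) Trace the consequence: with your sign, $S^{(a)}(F,\mc E([g]))=(-1)^k\mc R([f][g])$, and this extra $(-1)^k$ cancels the one coming from the form identification in Theorem \ref{thm alpha}, so your chain of equalities would conclude $\mc E\circ[\mc S^{(a)}]\big|_{\on{Sing}_a\FF^k(\A)}=\on{id}$, contradicting \Ref{(-1)^k identity} whenever $k$ is odd. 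With the sign-free lemma the computation closes correctly: the single $(-1)^k$ from $([f],[g])_{C_{\A,a}}=(-1)^kS^{(a)}(\mc E([f]),\mc E([g]))$ survives, giving $S^{(a)}(\mc E([f]),G)=(-1)^kS^{(a)}(F,G)$ for all $G$ and hence the theorem. So your proposal is salvageable, but only after this sign is corrected; as written, the pivotal step is false and yields the wrong constant.
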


Identity \Ref{(-1)^k identity} was conjectured in \cite{V5}. It was proved in  \cite{V5} that
the left-hand side in  \Ref{(-1)^k identity} is a nonzero scalar operator, if $\A$ is a generic arrangement.

\begin{rem}
The map $[\mc S^{(a)}]\big|_{\on{Sing}_a\FF^k(\A)}  :  \on{Sing}_a\FF^k(\A) \to  \OC$ is elementary.
The map $\mc E : \OC \to \on{Sing}_a\FF^k(\A)$ is  transcendental: it is given by a $k$-dimensional
integral. Formula \Ref{(-1)^k identity} says that the inverse map to the
transcendental map is  elementary.
\end{rem}

\begin{cor}
\label{1st main cor}
If the weight $a\in(\C^\times)^n$ is unbalanced, then the  map $[\mc S^{(a)}] : H^k(\FF,d) \to \OC$
is an isomorphism of vector spaces.

\end{cor}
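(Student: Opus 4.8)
The plan is to deduce the corollary formally from Theorem \ref{1st main thm} and Corollary \ref{cor ISo}, by exhibiting the induced map $[\mc S^{(a)}]\colon H^k(\FF(\A),d)\to\OC$ of \Ref{H^k to OC} as a composite of two maps already known to be isomorphisms. The two ingredients are the orthogonal-projection isomorphism $\pi^\perp\colon H^k(\FF(\A),d)\xrightarrow{\ \sim\ }\on{Sing}_a\FF^k(\A)$ of Corollary \ref{cor ISo} and the restriction $[\mc S^{(a)}]\big|_{\on{Sing}_a\FF^k(\A)}\colon\on{Sing}_a\FF^k(\A)\xrightarrow{\ \sim\ }\OC$ of Theorem \ref{1st main thm}; once I know $[\mc S^{(a)}]=\bigl([\mc S^{(a)}]\big|_{\on{Sing}_a\FF^k(\A)}\bigr)\circ\pi^\perp$ on cohomology, the assertion is immediate.

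First I would fix the relevant splitting. Since there are no edges of codimension exceeding $k$, we have $\FF^{k+1}(\A)=0$, so $d$ vanishes out of top degree and $H^k(\FF(\A),d)=\FF^k(\A)/d\FF^{k-1}(\A)$. By Corollary \ref{cor nondeg} the form $S^{(a)}$ is nondegenerate on $\on{Sing}_a\FF^k(\A)$, whence $\FF^k(\A)=\on{Sing}_a\FF^k(\A)\oplus\on{Sing}_a\FF^k(\A)^\perp$, and by Lemma \ref{lem orth Pr} the second summand is exactly $d\FF^{k-1}(\A)$. Thus $\FF^k(\A)=\on{Sing}_a\FF^k(\A)\oplus d\FF^{k-1}(\A)$, and the orthogonal projection $\pi^\perp$, whose kernel is $d\FF^{k-1}(\A)$, descends to the isomorphism of Corollary \ref{cor ISo}. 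Next I would verify the factorization: given a class in $H^k(\FF(\A),d)$ represented by $F=F_s+F_\perp$ with $F_s\in\on{Sing}_a\FF^k(\A)$ and $F_\perp\in d\FF^{k-1}(\A)$, one has $\pi^\perp(F)=F_s$, while $[\mc S^{(a)}](F)=[\mc S^{(a)}](F_s)$ because $[\mc S^{(a)}]$ annihilates $d\FF^{k-1}(\A)$ (the vanishing $\om^{(a)}|_{C_{\A,a}}=0$ recorded just before \Ref{H^k to OC}). Hence on cohomology $[\mc S^{(a)}]=\bigl([\mc S^{(a)}]\big|_{\on{Sing}_a\FF^k(\A)}\bigr)\circ\pi^\perp$.

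Finally, both factors are isomorphisms — $\pi^\perp$ by Corollary \ref{cor ISo} and $[\mc S^{(a)}]\big|_{\on{Sing}_a\FF^k(\A)}$ by Theorem \ref{1st main thm} — so their composite $[\mc S^{(a)}]\colon H^k(\FF(\A),d)\to\OC$ is an isomorphism. I do not expect any serious obstacle here, since essentially all the content is already contained in Theorem \ref{1st main thm}; the only point demanding a little care is the bookkeeping that the descent of $[\mc S^{(a)}]$ to $H^k(\FF(\A),d)$ agrees, under the identification of Corollary \ref{cor ISo}, with the restriction to $\on{Sing}_a\FF^k(\A)$ — that is, that the splitting realizing $H^k(\FF(\A),d)\cong\on{Sing}_a\FF^k(\A)$ is the same decomposition $\on{Sing}_a\FF^k(\A)\oplus d\FF^{k-1}(\A)$ along which $[\mc S^{(a)}]$ kills the complementary summand, which is exactly what Lemma \ref{lem orth Pr} guarantees.
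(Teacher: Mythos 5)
Your proposal is correct and follows essentially the same route the paper intends: the corollary is meant as an immediate consequence of Theorem \ref{1st main thm} combined with Corollary \ref{cor ISo}, via exactly the factorization $[\mc S^{(a)}]=\bigl([\mc S^{(a)}]\big|_{\on{Sing}_a\FF^k(\A)}\bigr)\circ\pi^\perp$ you verify, where the annihilation of $d\FF^{k-1}(\A)$ is the observation recorded just before \Ref{H^k to OC}. Your extra bookkeeping with the splitting $\FF^k(\A)=\on{Sing}_a\FF^k(\A)\oplus d\FF^{k-1}(\A)$ is a careful spelling-out of what the paper leaves implicit, not a different argument.
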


\subsection{Proof of Theorems \ref{thm alpha} and \ref{1st main thm}}
\label{proofs}

First assume that the weight $a$ is generic and all critical points of $\Phi_{\A,a}$ are nondegenerate. Then the special
vectors $(\on{F}(u))_{u\in C_{\A,a}}$ form a basis of $\on{Sing}_a\FF^k(\A)$ by Theorems \ref{first theorem},
  \ref{thm V,OT,S} and Corollary \ref{lem on dim of sing}. (In the theory of quantum integrable systems this fact   is called the
  {\it completeness of the Bethe ansatz method}, see \cite{V3, V4}.)

Theorem \ref{first theorem} and formula \Ref{res of Hess} applied to the basis $(\on{F}(u))_{u\in C_{\A,a}}$
show that $S^{(a)}$ restricted to $\Sing_a\FF^k(\A)$ is nondegenerate and $\mc E :  \OC \to \sing \FF^k(\A)$ is an isomorphism of vector spaces
that identifies the residue form on $\OC$ and  the contravariant form on $\sing \FF^k(\A)$ multiplied by $(-1)^k$.
(More precisely, this follows from the following fact. 
Let $u \in C_{\A,a}$ and $g_u\in \OC$ be the function which equals 1 at $u$ and equals 0 at other points of $C_{\A,a}$.
Then $\mc E(g_u) = \on{F}(u)/\Hess_{\A,a}(u)$.)

The orthogonal projection $\FF^k(\A) \to \Sing_a\FF^k(\A)$ is defined by the formula
\bean
\label{proj}
F\mapsto {\sum}_{u\in C_{\A,a}} \frac{S^{(a)}(F,\on{F}(u))}{S^{(a)}(\on{F}(u),\on{F}(u))}\on{F}(u)
= (-1)^k {\sum}_{u\in C_{\A,a}} \frac{S^{(a)}(F,\on{F}(u))}{\Hess_{\A,a}(u)}\on{F}(u) .
\eean
Let $\mc S^{(a)}(F) = f dt_1\wedge\dots\wedge dt_k$ and $u\in U(\A)$, then $f(u) = S^{(a)}(F, \on{F}(u))$ by the definitions of
$\on{F}(u)$, $\mc S^{(a)}$,  $S^{(a)}$. Hence the map $[\mc S^{(a)}]$ defined in \Ref{psi map} sends $F$ to the element of
$\OC$ which equals $S^{(a)}(F, \on{F}(u))$ at every $u\in C_{\A,a}$. Applying formula \Ref{EX} to this element we obtain
\bean
\label{composition}
\mc E {\circ} [\mc S^{(a)}]\ :\ F \ \mapsto \ {\sum}_{u \in C_{\A,a}}  \frac {S^{(a)}(F, \on{F}(u))}{\Hess_{\A,a}(u)} \on{F}(u)\,.
\eean
These two formulas prove Theorem \ref{1st main thm} if all critical points are nondegenerate.

Assume now that the weight $a$ is unbalanced. Then all critical points of $\Phi_{\A,a}$ are isolated and the sum of
the corresponding Milnor numbers
equals $|\chi(U(\A))|$.  We deform the weight $a$ to make it generic and to make all critical points nondegenerate. Then
$\Sing_a\FF(\A)$ and $S^{(a)}\big|_{\Sing_a\FF(\A)}$ continuously depend on the deformation as well as
$\OC$ and $(\,,\,)_{C_{\A,a}}$.
The maps $[\mc S^{(a)}]\big|_{\Sing_a\FF(\A)}$ and $\mc E$
also continuously depend on  the deformation. This implies that
for the initial unbalanced weight $a$, we have
the identity  $ \mc E {\circ} [\mc S^{(a)}]\big|_{\on{Sing}_a\FF^k(\A)}=(-1)^k$
and the fact that $\mc E$ identifies
the residue form on $\OC$ and  the contravariant form on $\sing \FF^k(\A)$ multiplied by $(-1)^k$.
This proves Theorems \ref{1st main thm} and \ref{thm alpha}.

\subsection{Integral structure on $\OC$ and $\Sing_a\FF^k(\A)$}
If the weight $a$ is unbalanced, the formula $H^p(\FF(\A),d) = H^p((\FF(\A))_\Z,d_\Z)\otimes \C$
 and the isomorphism  $[\mc S^{(a)}]\big|_{\on{Sing}_a\FF^k(\A)} : H^k(\FF,d) \to \OC$
define an integral structure on $\OC$.
More precisely, for   a $k$-flag of edges $X_{\al_0}\supset X_{\al_1}\supset\dots\supset X_{\al_k}$, let
$\mc S^{(a)}(F_{\al_0,\dots,\al_k}) = f_{\al_0,\dots,\al_k} dt_1\wedge\dots\wedge dt_k$.
Denote by $w_{\al_0,\dots,\al_k}$ the element $[f_{\al_0,\dots,\al_k}]\in \OC$.

\begin{cor}
\label{marked span}
If the weight $a$ is unbalanced, then the
set of all elements $\{w_{\al_0,\dots,\al_k}\}$, labeled by all  $k$-flag of edges  of $\A$, spans
the vector space  $\OC$.
All linear relations between the elements of the set are corollaries of the relations
\bean
\label{relations w}
&&
{\sum}_{X_\beta,
X_{\alpha_{j-1}}\supset X_\beta\supset X_{\alpha_{j+1}}}\!\!\!
w_{{\alpha_0},\dots,
{\alpha_{j-1}},{\beta},{\alpha_{j+1}},\dots,{\alpha_p}=\alpha}=0\,,
\\
&&
{\sum}_{X_{\beta},
X_{\alpha_{p}}\supset X_{\beta}}
 w_{{\alpha_0},\dots,
{\alpha_{p}},{\beta}} = 0\,,
\notag
\eean
cf. formulas \Ref{relations F}, \Ref{d in F}.
\qed
\end{cor}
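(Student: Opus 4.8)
The plan is to deduce both assertions from Corollary~\ref{1st main cor}, which supplies the isomorphism $[\mc S^{(a)}] : H^k(\FF(\A),d) \to \OC$, together with the defining presentation of the flag space $\FF^k(\A)$. By construction the marked element $w_{\al_0,\dots,\al_k}$ is the image $[\mc S^{(a)}](F_{\al_0,\dots,\al_k})$ under the map \Ref{psi map}. Since $\A$ is essential there are no edges of codimension larger than $k$, so $\FF^{k+1}(\A)=0$; hence $d\colon\FF^k(\A)\to\FF^{k+1}(\A)$ is the zero map and $H^k(\FF(\A),d)=\FF^k(\A)/d\FF^{k-1}(\A)$, with \Ref{H^k to OC} the induced map on this quotient.

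For the spanning assertion I would simply note that the elements $F_{\al_0,\dots,\al_k}$, indexed by all $k$-flags, span $\FF^k(\A)$ by definition. The composite $\FF^k(\A)\twoheadrightarrow H^k(\FF(\A),d)\to\OC$, whose first arrow is the canonical projection and whose second arrow is the isomorphism of Corollary~\ref{1st main cor}, is therefore surjective, so its values $w_{\al_0,\dots,\al_k}$ span $\OC$.

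For the relations I would use the resulting two-step presentation of $\OC$. Passing from the free module on $k$-flags to $\FF^k(\A)$ imposes exactly the relations \Ref{relations F} at top level, which are the first family in \Ref{relations w}. Because $[\mc S^{(a)}]$ is an isomorphism on $H^k(\FF(\A),d)=\FF^k(\A)/d\FF^{k-1}(\A)$, the kernel of the surjection $\FF^k(\A)\to\OC$ is precisely $d\FF^{k-1}(\A)$; evaluating the differential \Ref{d in F} on the generators $F_{\al_0,\dots,\al_{k-1}}$ of $\FF^{k-1}(\A)$ exhibits this kernel as the span of the second family in \Ref{relations w}. Together these show that every linear relation among the $w_{\al_0,\dots,\al_k}$ follows from the two families in \Ref{relations w}. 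Both families have coefficients $\pm1$ and refer only to containments of edges, so by \Ref{z vs C} they are defined over $\Z$ and depend only on the matroid of $\A$.

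The step I expect to demand the most care is the exact identification $\ker\bigl(\FF^k(\A)\to\OC\bigr)=d\FF^{k-1}(\A)$. The inclusion $\supseteq$ is elementary, since $[\mc S^{(a)}]$ annihilates $d\FF^{k-1}(\A)$ because $\om^{(a)}$ vanishes on $C_{\A,a}$, as observed after \Ref{psi map}. The reverse inclusion --- the assertion that there are no further relations --- is exactly the injectivity of the induced map on $H^k(\FF(\A),d)$, i.e.\ the content of Corollary~\ref{1st main cor}, which in turn rests on Theorem~\ref{1st main thm}.
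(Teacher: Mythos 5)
Your proposal is correct and follows essentially the same route as the paper: the paper states this corollary with no separate argument precisely because, once $w_{\al_0,\dots,\al_k}=[\mc S^{(a)}](F_{\al_0,\dots,\al_k})$ and $[\mc S^{(a)}]:H^k(\FF(\A),d)\to\OC$ is known to be an isomorphism (Corollary~\ref{1st main cor}), the presentation of $H^k(\FF(\A),d)=\FF^k(\A)/d\FF^{k-1}(\A)$ by flag generators, the relations \Ref{relations F}, and the image of the differential \Ref{d in F} transports verbatim to $\OC$, exactly as you argue. (One minor quibble: $\FF^{k+1}(\A)=0$ holds simply because no edge in $\C^k$ has codimension exceeding $k$; essentiality is not needed for that vanishing, only for the existence of vertices.)
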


Similarly,  for   a $k$-flag of edges $X_{\al_0}\supset X_{\al_1}\supset\dots\supset X_{\al_k}$, let
$v_{\al_0,\dots,\al_k}$ be the orthogonal projection of $F_{\al_0,\dots,\al_k}$ to $\Sing_a\FF^k(\A)$.

\begin{cor}
If the weight $a$ is unbalanced, then the
set of all elements $\{v_{\al_0,\dots,\al_k}\}$, labeled by all  $k$-flag of edges  of $\A$, spans
the vector space  $\Sing_a\FF^k(\A)$.
All linear relations between the elements of the set are corollaries of the relations
\bean
\label{relations v}
&&
{\sum}_{X_\beta,
X_{\alpha_{j-1}}\supset X_\beta\supset X_{\alpha_{j+1}}}\!\!\!
v_{{\alpha_0},\dots,
{\alpha_{j-1}},{\beta},{\alpha_{j+1}},\dots,{\alpha_p}=\alpha}=0\,,
\\
&&
{\sum}_{X_{\beta},
X_{\alpha_{p}}\supset X_{\beta}}
 v_{{\alpha_0},\dots,
{\alpha_{p}},{\beta}} = 0\,,
\notag
\eean
cf. formulas \Ref{relations F}, \Ref{d in F}.
\qed
\end{cor}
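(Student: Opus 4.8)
The plan is to deduce this statement from Corollary \ref{marked span} for the marked elements $\{w_{\al_0,\dots,\al_k}\}$, transporting it along the isomorphism $[\mc S^{(a)}]\big|_{\on{Sing}_a\FF^k(\A)}:\on{Sing}_a\FF^k(\A)\to\OC$ of Theorem \ref{1st main thm}. First I would recall that by definition $v_{\al_0,\dots,\al_k}=\pi^\perp(F_{\al_0,\dots,\al_k})$ is the image of the flag generator under the orthogonal projection $\pi^\perp:\FF^k(\A)\to\on{Sing}_a\FF^k(\A)$. By Lemma \ref{lem orth Pr} the kernel of $\pi^\perp$ equals $\on{Sing}_a\FF^k(\A)^\perp=d\FF^{k-1}(\A)$, so $\pi^\perp$ factors through $H^k(\FF(\A),d)=\FF^k(\A)/d\FF^{k-1}(\A)$ and, by Corollary \ref{cor ISo}, induces an isomorphism $H^k(\FF(\A),d)\cong\on{Sing}_a\FF^k(\A)$. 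Since the classes $[F_{\al_0,\dots,\al_k}]$ span $H^k(\FF(\A),d)$, the elements $v_{\al_0,\dots,\al_k}$ span $\on{Sing}_a\FF^k(\A)$.

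The key observation I would record next is the identity $[\mc S^{(a)}](v_{\al_0,\dots,\al_k})=w_{\al_0,\dots,\al_k}$. Indeed, $F_{\al_0,\dots,\al_k}-v_{\al_0,\dots,\al_k}$ lies in $\on{Sing}_a\FF^k(\A)^\perp=d\FF^{k-1}(\A)$ by Lemma \ref{lem orth Pr}, and the map $[\mc S^{(a)}]$ of \Ref{psi map} annihilates $d\FF^{k-1}(\A)$, as noted just before \Ref{H^k to OC}. Hence $[\mc S^{(a)}](v_{\al_0,\dots,\al_k})=[\mc S^{(a)}](F_{\al_0,\dots,\al_k})=w_{\al_0,\dots,\al_k}$, the marked element of Corollary \ref{marked span}. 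This is the one computational point of the argument, and the nearest thing to an obstacle; everything else is transport of structure along established isomorphisms.

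With the identity in hand the conclusion is formal. Let $R$ denote the free vector space on the set of $k$-flags of edges of $\A$. The assignments $F_{\al_0,\dots,\al_k}\mapsto v_{\al_0,\dots,\al_k}$ and $F_{\al_0,\dots,\al_k}\mapsto w_{\al_0,\dots,\al_k}$ define surjections $R\to\on{Sing}_a\FF^k(\A)$ and $R\to\OC$, and by the key identity the second is the first followed by the isomorphism $[\mc S^{(a)}]\big|_{\on{Sing}_a\FF^k(\A)}$ of Theorem \ref{1st main thm}. Two maps out of $R$ that differ by a postcomposed isomorphism have the same kernel, so the two relation modules coincide as subspaces of $R$. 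By Corollary \ref{marked span} this common kernel is spanned by the vectors \Ref{relations w}; read in $R$ these are literally the same vectors as the relations \Ref{relations v}, obtained from the defining relations \Ref{relations F} of $\FF^k(\A)$ and from $d\FF^{k-1}(\A)\subset\ker\pi^\perp$ via \Ref{d in F}. Therefore the $v_{\al_0,\dots,\al_k}$ span $\on{Sing}_a\FF^k(\A)$ and all linear relations among them are corollaries of \Ref{relations v}. I note that a direct proof, parallel to that of Corollary \ref{marked span} but using Corollary \ref{cor ISo} in place of Corollary \ref{1st main cor}, would work equally well.
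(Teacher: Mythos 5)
Your proof is correct, and its logical organization differs slightly from what the paper intends. The paper states this corollary with no argument, and the word ``Similarly'' preceding the definition of the $v_{\al_0,\dots,\al_k}$ indicates that the intended proof is the direct one, parallel to Corollary \ref{marked span}: by Lemma \ref{lem orth Pr} and Corollary \ref{cor ISo} the projection $\pi^\perp$ induces an isomorphism $H^k(\FF(\A),d)\cong\on{Sing}_a\FF^k(\A)$, so the integral structure \Ref{z vs C} on $H^k$ (flag classes span; relations generated by \Ref{relations F} and the coboundaries \Ref{d in F}) transports to the $v$'s --- this is exactly the route you sketch in your first paragraph and again in your closing remark, and it uses only Corollary \ref{cor ISo}, not Theorem \ref{1st main thm}. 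Your main deduction instead transports Corollary \ref{marked span} backwards along the isomorphism $[\mc S^{(a)}]\big|_{\on{Sing}_a\FF^k(\A)}$ of Theorem \ref{1st main thm}, with the key identity $[\mc S^{(a)}](v_{\al_0,\dots,\al_k})=w_{\al_0,\dots,\al_k}$ proved from $F_{\al_0,\dots,\al_k}-v_{\al_0,\dots,\al_k}\in d\FF^{k-1}(\A)\subset\ker[\mc S^{(a)}]$. That identity is precisely the first half of formula \Ref{marked correspondence}, which the paper asserts without proof; so your variant costs an appeal to the (deeper) Theorem \ref{1st main thm} but buys a proof of \Ref{marked correspondence} as a byproduct, whereas the paper's route is self-contained at the level of the contravariant form and the orthogonal projection. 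Both arguments are complete and correct.
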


We have
\bean
\label{marked correspondence}
 [\mc S^{(a)}] : v_{\al_0,\dots,\al_k} \mapsto w_{\al_0,\dots,\al_k},
\qquad
 \mc E : w_{\al_0,\dots,\al_k} \mapsto (-1)^k v_{\al_0,\dots,\al_k}.
 \eean

The elements $\{w_{\al_0,\dots,\al_k}\}\subset \OC$ and
$\{v_{\al_0,\dots,\al_k}\}\subset \Sing_a\FF^k(\A)$ will be called the {\it
marked elements}. The relations \Ref{relations w}, \Ref{relations v}
will be called the {\it marked relations}.

\begin{rem}

An interesting problem is to express  $1\in \OC$ as a linear combination of the marked elements
$w_{{\alpha_0},\dots, {\alpha_{k}}}$, see \cite{V6}, where such a formula is given for a generic arrangement.
Notice, that if all points of the critical set $C_{\A,a}$ are nondegenerate, then
\bea
\mc E(1) ={\sum}_{u\in C_{\A,a}} \on{F}(u)/\Hess_{\A,a}(u)\,,
\eea
 see \cite{MTV1}, where such sums were studied.
\end{rem}

\subsection{Skew-commutative versus commutative}
\label{versus}

If $a$ is unbalanced, then
\bean
\label{ISo}
H^k(\OS(\A),d^{(a)})\cong H^k(\FF(\A),d) \cong \OC
\eean
as vector spaces.
The first space is a cohomology space of a skew-commutative graded algebra $\OS(\A)$.
The last  space is the vector space of a commutative algebra. Isomorphisms \Ref{ISo} identify
these skew-commutative and commutative objects. It is interesting
to identify the multiplication operators on the last space with
suitable operators on the first two spaces. It turns out that those operators
appear in the associated Gauss-Manin (hypergeometric) differential equations, see Section
\ref{Critical set} and \cite{MTV1, V4, V5}.

Another identification of skew-commutative and commutative objects of an arrangement
see in \cite{GV, P}.

\subsection{Combinatorial connection}
\label{sec Comb Conn}

Consider a deformation $\A(s)$ of the  arrangement $\A$, which preserves the combinatorics of $\A$.
Assume that  the edges of $\A(s)$ can be identified with the edges of $\A$ so that
the elements in formula  \Ref{relations F} and the differential in formula \Ref{d in F}
do not depend on $s$. Assume that the deformed arrangement $\A(s)$ has a deformed weight $a(s)$, which remains
unbalanced. Then for every $s$, the elements
$\{w_{\al_0,\dots,\al_k}(s)\}$ span $\mc O(C_{\A(s),a(s)})$ as a vector space with linear relations
\Ref{relations w}  not depending on $s$. This  allows us to identify all the vector spaces $\mc O(C_{\A(s),a(s)})$.
In particular, if an element $w(s)\in \mc O(C_{\A(s),a(s)})$ is given, then the derivative $\frac{dw}{ds}$ is well-defined.
This construction is called the {\it combinatorial connection} on the family of algebras $\mc O(C_{\A(s),a(s)})$, see \cite{V6}.
All the elements $\{w_{\al_0,\dots,\al_k}(s)\}$ are flat sections of the combinatorial connection.

Similarly we can define the combinatorial connection on the family of vector spaces
\linebreak
 $\Sing_{a(s)}\FF^k(\A(s))$.

\subsection{Arrangement with normal crossings}

\label{An arrangement with normal crossings only}

 An essential arrangement $\A$ is {\it with normal crossings},
if exactly $k$ hyperplanes meet at every vertex of $\A$.
Assume that $\A$ is an essential arrangement with normal crossings.
A subset $\{j_1,\dots,j_p\}\subset J$ is called {\it independent} if
the hyperplanes $H_{j_1},\dots,H_{j_p}$ intersect transversally.

A basis of $\OS^p(\A)$ is formed by
$(H_{j_1},\dots,H_{j_p})$, where
$\{{j_1} <\dots <{j_p}\}$  are independent ordered $p$-element subsets of
$J$. The dual basis of $\FF^p(\A)$ is formed by the corresponding vectors
$F(H_{j_1},\dots,H_{j_p})$.
These bases of $\OS^p(\A)$ and $\FF^p(\A)$  are called {\it standard}.
We  have
\bean
\label{skew}
F(H_{j_1},\dots,H_{j_p}) = (-1)^{|\sigma|}
F(H_{j_{\sigma(1)}},\dots,H_{j_{\sigma(p)}}), \qquad \on{for}\ \sigma \in \Si_p.
\eean
For an independent subset $\{j_1,\dots,j_p\}$, we have
$S^{(a)}(F(H_{j_1},\dots,H_{j_p}) , F(H_{j_1},\dots,H_{j_p})) = a_{j_1}\cdots a_{j_p}$
and
$S^{(a)}(F(H_{j_1},\dots,H_{j_p}) , F(H_{i_1},\dots,H_{i_k})) = 0$
for distinct elements of the standard basis.
If $a$ is unbalanced, then the marked elements in $\OC$ are
\bean
\label{markeD}
w_{i_1,\dots,i_k} = d_{i_1,\dots,i_k} \frac{a_{i_1}}{[f_{i_1}]}\dots \frac{a_{i_k}}{[f_{i_k}]}\,,
\eean
where  $\{i_1,\dots,i_k\}$ runs through the set of all independent $k$-element subsets of $J$. We have
\bean
\label{skew marked}
w_{i_{\sigma(1)},\dots,i_{\sigma(k)}} = (-1)^\sigma w_{i_1,\dots,i_k}, \qquad \on{for}\ \sigma \in \Si_k.
\eean
We put $w_{i_1,\dots,i_k}=0$ if the set $\{i_1,\dots,i_k\}$ is dependent. The marked relations are labeled by independent subsets
$\{i_2,\dots,i_k\}$ and have the form
\bean
\label{rel Ma}
{\sum}_{j\in J} w_{j,i_2,\dots,i_k} = 0 .
\eean
The marked elements $v_{i_1,\dots,i_k}$ in $\Sing_a\FF^k(\A)$ are orthogonal projections to $\Sing_a\FF^k(\A)$ of
the elements $F(H_{i_1},\dots, H_{i_k})$ with the skew-symmetry property
\bean
\label{skew marked v}
v_{i_{\sigma(1)},\dots,i_{\sigma(k)}} = (-1)^\sigma v_{i_1,\dots,i_k}, \qquad \on{for}\ \sigma \in \Si_k.
\eean
and the marked relations
\bean
\label{rel Ma v}
{\sum}_{j\in J} v_{j,i_2,\dots,i_k} = 0
\eean
labeled by independent subsets  $\{i_2,\dots,i_k\}$.

\section{Family of parallelly transported hyperplanes}
\label{sec par trans}

\subsection{Arrangement in  $\C^n\times\C^k$}

\label{An arrangement in}

Recall that $J=\{1,\dots,n\}$.
Consider $\C^k$ with coordinates $t_1,\dots,t_k$,\
$\C^n$ with coordinates $z_1,\dots,z_n$, the projection
$\pi:\C^n\times\C^k \to \C^n$.
Fix $n$ nonzero linear functions on $\C^k$,
$g_j=b_j^1t_1+\dots + b_j^kt_k,$\ $ j\in J,$
where $b_j^i\in \C$. We assume that the functions $g_j, j\in J$,
span the dual space $(\C^k)^*$.

Define $n$ linear functions on $\C^n\times\C^k$,
$f_j = g_j + z_j = b_j^1t_1+\dots + b_j^kt_k + z_j ,$\ $ j\in J.$
We consider in $\C^n\times \C^k$
 the arrangement of hyperplanes $\A  = \{ H_j\}_{j\in J}$, where $ H_j$ is the zero set of $f_j$,
and denote by $ U(\A ) = \C^n\times \C^k - \cup_{j\in J} H_j$ the complement.

\begin{lem}
\label{lem rel for z f}
For any linear relation  $\sum_{j=1}^n \bt_jg_j=0$ we have the relation
\bean
\label{z f rela}
{\sum}_{j=1}^n \bt_j(z_j-f_j)=0.  \qquad\qquad \qquad\qquad\qquad \qed
\eean
\end{lem}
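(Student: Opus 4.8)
The plan is to reduce the statement to the single defining identity $f_j = g_j + z_j$, from which the conclusion follows by a one-line substitution. First I would record that this identity gives, for every $j \in J$,
\be
z_j - f_j = z_j - (g_j + z_j) = -g_j,
\ee
an equality of linear functions on $\C^n\times\C^k$. Here the right-hand side is (up to sign) the pullback of $g_j$ along the projection onto the $\C^k$ factor, so $z_j - f_j$ does not depend on the coordinates $z_1,\dots,z_n$.

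Next I would substitute this into the combination to be evaluated. Multiplying by $\bt_j$ and summing over $j$ yields
\be
\sum_{j=1}^n \bt_j(z_j - f_j) = -\sum_{j=1}^n \bt_j g_j.
\ee
The hypothesis asserts precisely that the linear combination $\sum_{j=1}^n \bt_j g_j$ vanishes in $(\C^k)^*$; regarded as a function on $\C^n\times\C^k$ that is constant along the $\C^n$ factor, it is therefore identically zero. Hence the right-hand side above vanishes, and we obtain $\sum_{j=1}^n \bt_j(z_j - f_j) = 0$, which is \Ref{z f rela}.

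There is no genuine obstacle in this argument: the entire content sits in the observation $z_j - f_j = -g_j$. The only point worth flagging is the bookkeeping between the two ambient spaces, since the relation is hypothesized among linear functionals on $\C^k$ but asserted among functions on $\C^n\times\C^k$; these are matched simply by pullback along the projection onto the $\C^k$ factor, under which the hypothesized relation is preserved.
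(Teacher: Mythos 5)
Your proof is correct and is exactly the argument the paper leaves implicit: the lemma is stated with a \qed because it reduces to the identity $z_j - f_j = -g_j$, after which the hypothesized relation $\sum_{j=1}^n \bt_j g_j = 0$ gives the conclusion immediately. Your added care about pulling back functionals on $\C^k$ to functions on $\C^n\times\C^k$ is fine bookkeeping but introduces nothing beyond what the paper takes for granted.
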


For every $x=(x_1,\dots,x_n)\in \C^n$, the arrangement $\A$
induces an arrangement $\A(x)$ in the fiber $\pi^{-1}(x)$. We
identify every fiber with $\C^k$. Then $\A(x)$ consists of
hyperplanes  $\{H_j(x)\}_{j\in J}$, defined in $\C^k$ by the equations
$g_j+x_j=0$. Thus $\{\A(x)\}_{x\in \C^n}$ is a family of arrangements in $\C^k$,
whose hyperplanes are transported parallelly as $x$ changes.
 We denote by $ U(\A(x)) = \C^k - \cup_{j\in J} H_j(x)$  the complement.

For almost all points $x\in \C^n$, the arrangement $\A(x)$ is with normal crossings.
Such points  form  the complement in $\C^n$ to the union
of suitable hyperplanes called the {\it discriminant}.

\subsection{Discriminant}
\label{Discr}

The collection $(g_j)_{j\in J}$ induces a
matroid structure on $J$.  A subset $C=\{i_1,\dots,i_r\}\subset J$ is
a {\it circuit}  if $(g_i)_{i\in C}$ are linearly dependent but any
proper subset of $C$ gives linearly independent $g_i$'s.
Denote by $\frak C$ the set of all circuits in $J$.

For a circuit $C=\{i_1,\dots,i_r\}$, \  let
$(\la^C_i)_{i\in C}$ be a nonzero collection of complex numbers such that
$\sum_{i\in C} \la^C_ig_i = 0$.
 Such a collection  is unique up to
multiplication by a nonzero number.
For every circuit $C$ we fix such a collection
and denote $f_C = \sum_{i\in C} \la^C_iz_i$.
The equation $f_C=0$ defines a hyperplane $H_C$ in
$\C^n$. It is convenient to assume that $\la^C_i=0$ for $i\in J-C$ and write
$f_C = \sum_{i\in J} \la^C_iz_i$.

\begin{lem}
\label{circ generate all}
Any linear relation $\sum_{j\in J} c_jg_j =0$ is a linear combination of  relations
\linebreak
$\sum_{i\in J} \la^C_ig_i = 0$ associated with circuits $C\in\frak C$.
\qed
\end{lem}

For any $x\in\C^n$, the hyperplanes $\{H_i(x)\}_{i\in C}$ in $\C^k$ have nonempty
intersection if and only if $x\in H_C$. If $x\in H_C$, then the
intersection has codimension $r-1$ in $\C^k$.
The union  $\Delta = \cup_{C\in \frak C} H_C$ is called the {\it discriminant}.
The arrangement $\A(x)$ in $\C^k$
has normal crossings  if and only if $x\in \C^n-\Delta$, see \cite{V5}.

On the discriminant see also \cite{BB}.

\subsection{Combinatorial connection}
\label{sec Good fibers}

For any
$x^1, x^2\in \C^n-\Delta$, the spaces $\FF^p(\A(x^1))$, $\FF^p(\A(x^2))$
 are canonically identified if a vector $F(H_{j_1}(x^1),\dots,H_{j_p}(x^1))$ of the first space
is identified  with the vector $F(H_{j_1}(x^2),\dots,H_{j_p}(x^2))$ of the second. In other words, we identify the standard bases of
these spaces.

Assume that a weight $a\in(\C^\times)^n$ is given. Then each arrangement $\A(x)$  is weighted.
The identification of spaces $\FF^p(\A(x^1))$,
$\FF^p(\A(x^2))$ for $x^1,x^2\in\C^n-\Delta$ identifies the corresponding subspaces
$\on{Sing}_a\FF^k(\A(x^1))$, $\on{Sing}_a\FF^k(\A(x^2))$ and contravariant forms.

Assume that the weighted arrangement  $(\A(x),a)$ is unbalanced  for some $x\in\C^n-\Delta$,
then $(\A(x),a)$ is unbalanced for all $x\in\C^n-\Delta$.
The identification of $\on{Sing}_a\FF^k(\A(x^1))$ and $\on{Sing}_a\FF^k(\A(x^2))$
also identifies the marked elements $v_{j_1,\dots,j_k}(x^1)$ and $v_{j_1,\dots,j_k}(x^1)$,
see Section \ref{An arrangement with normal crossings only}.
For  $x\in\C^n-\Delta$, denote $V=\FF^k(\A(x))$, $\on{Sing}_a V=\on{Sing}_a\FF^k(\A(x))$,  $v_{j_1,\dots,j_k} = v_{j_1,\dots,j_k}(x)$.
The triple $(V, \on{Sing}_a V, S^{(a)})$, with marked elements $v_{j_1,\dots,j_k}$,
 does not depend on  $x$ under the identification.

As a result of this reasoning we obtain the canonically trivialized
vector bundle
\bean
\label{combbundle}
\sqcup_{x\in \C^n-\Delta}\,\FF^k(\A(x))\to \C^n-\Delta,
\eean
with the canonically trivialized subbundle $\sqcup_{x\in \C^n-\Delta}\, \on{Sing}_a\FF^k(\A(x))\to \C^n-\Delta$
and the constant contravariant form on the fibers.
This trivialization identifies the bundle in \Ref{combbundle} with the bundle
$(\cd)\times V\to \cd$
and the subbundle  $\sqcup_{x\in \C^n-\Delta}\,\on{Sing}_a\FF^k(\A(x))\to \C^n-\Delta$ with the subbbundle
\bean
\label{comb Bundle}
(\cd)\times(\sv)\to \cd.
\eean
The bundle in \Ref{comb Bundle} will be
 called the {\it combinatorial bundle}, the flat connection on it will be called {\it combinatorial}, see \cite{V5, V6},  cf. Section
\ref{sec Comb Conn}.

\subsection{Operators $K_j\in \mc O(\C^n-\Delta)\otimes(\End {V})$, $j\in J$}
\label{sec key identity}

For a circuit $C=\{i_1, \dots, i_r\}\subset J$, we
define the linear operator $L_C : V\to V$ as follows.
Let $C_m=C-\{i_m\}$. Let $F(H_{j_1},\dots,H_{j_k})$ be
an element of the standard basis.
We set $L_C : F(H_{j_1},\dots,H_{j_k}) \mapsto 0$ if
$|\{{j_1},\dots,{j_k}\}\cap C| < r-1$.
If $\{{j_1},\dots,{j_k}\}\cap C = C_m$, then by \Ref{skew}
we have
$
F(H_{j_1},\dots,H_{j_k})
=
\pm F(H_{i_1},H_{i_2},\dots,\widehat{H_{i_{m}}},\dots,H_{i_{r-1}}, H_{i_{r}},H_{s_1},\dots,H_{s_{k-r+1}})
$
with $\{{s_1},\dots,{s_{k-r+1}}\}=
\{{j_1},\dots,{j_k}\}-C_m$.
We set
\bean
\label{L_C}
&&
L_C :
F(H_{i_1},\dots,\widehat{H_{i_{m}}},\dots,H_{i_{r}},H_{s_1},\dots,H_{s_{k-r+1}})
 \mapsto
\\
\notag
&&
\phantom{aaaaaaaaa}
(-1)^m {\sum}_{l=1}^{r} (-1)^l a_{i_l}
F(H_{i_1},\dots,\widehat{H_{i_{l}}},\dots,H_{i_{r}},H_{s_1},\dots,H_{s_{k-r+1}}) .
\eean
Consider on $\C^n\times\C^k$ the logarithmic 1-forms
$\omega_C = \frac {df_C}{f_C}, \ C\in \frak C$.
Recall  $f_C = \sum_{j\in J}\la^C_jz_j$.
We set
\bean
\label{K_j}
K_j \ = \ {\sum}_{C\in \frak C}
\,\frac{\la_j^C}{f_C} \,L_C \ {}\ \in \mc O(\C^n-\Delta)\otimes(\End {V})  .
\eean
We have
\bean
\label{LK}
{\sum}_{C\in \frak C}
\omega_{C} \otimes L_C = {\sum}_{j\in J} dz_j\otimes K_j .
\eean

\begin{thm} [\cite{V5}]
\label{thm K sym}
For any $j\in J$ and $x\in \C^n-\Delta$, the operator $K_j(x)$ preserves
the subspace $\on{Sing}_a V\subset V$ and is a symmetric operator,
$S^{(a)}(K_j(x)v,w)= S^{(a)}(v, K_j(x)w)$ for all $v,w\in V$.
\end{thm}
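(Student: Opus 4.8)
The plan is to reduce both assertions to corresponding statements about the individual operators $L_C$. For a fixed $x\in\C^n-\Delta$ the coefficients $\la_j^C/f_C(x)$ in \Ref{K_j} are nonzero scalars (since $x\notin H_C$), so $K_j(x)=\sum_{C\in\frak C}(\la_j^C/f_C(x))\,L_C$ is a scalar linear combination of the $L_C$; consequently it suffices to show that each $L_C$ is symmetric with respect to $S^{(a)}$ and preserves $\sv=\on{Sing}_a V$, and both properties then pass to $K_j(x)$. I will use throughout that, because $\A(x)$ has normal crossings for $x\in\C^n-\Delta$, Section \ref{An arrangement with normal crossings only} gives an $S^{(a)}$-orthogonal standard basis of $V=\FF^k(\A(x))$ with $S^{(a)}(F(H_{j_1},\dots,H_{j_k}),F(H_{j_1},\dots,H_{j_k}))=a_{j_1}\cdots a_{j_k}\ne0$; in particular $S^{(a)}$ is nondegenerate on all of $V$.

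For symmetry, fix a circuit $C=\{i_1,\dots,i_r\}$. Formula \Ref{L_C} shows that $L_C$ annihilates every standard basis vector meeting $C$ in fewer than $r-1$ indices and otherwise preserves the block $V_S=\on{span}\{F(H_{i_1},\dots,\widehat{H_{i_m}},\dots,H_{i_r},H_{s_1},\dots,H_{s_{k-r+1}}):m=1,\dots,r\}$ determined by the complementary set $S=\{s_1,\dots,s_{k-r+1}\}\subset J-C$. Distinct blocks are spanned by distinct standard basis vectors, hence are $S^{(a)}$-orthogonal, so it is enough to check symmetry inside one $V_S$. Writing $e_m$ for the $m$-th generator, \Ref{L_C} reads $L_C e_m=(-1)^m\sum_{l=1}^r(-1)^l a_{i_l}e_l$, while the Gram matrix of $V_S$ is diagonal with $S^{(a)}(e_m,e_m)=(\prod_{i\in C\cup S}a_i)/a_{i_m}$. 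Hence $S^{(a)}(L_C e_m,e_p)=(-1)^{m+p}\prod_{i\in C\cup S}a_i=S^{(a)}(e_m,L_C e_p)$, and $L_C$, therefore $K_j(x)$, is symmetric.

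For the invariance of $\sv$, I would first record that Lemma \ref{lem orth Pr}, together with the nondegeneracy of $S^{(a)}$ on $V$, gives $\sv=(d\FF^{k-1}(\A(x)))^\perp$. Since a symmetric operator that preserves a subspace also preserves its orthogonal complement, it then suffices to prove that $L_C$ maps $d\FF^{k-1}$ into itself. To get this I would extend the formula \Ref{L_C} verbatim to every $\FF^p(\A(x))$ with $p\ge r-1$ (taking the complementary set to have size $p-r+1$) and show that the resulting degree-preserving operator commutes with the flag differential $d$ of \Ref{d in F}, i.e. that $L_C$ is an endomorphism of the complex $(\FF^\bullet,d)$. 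Granting $L_C\circ d=d\circ L_C$, one gets $L_C(d\FF^{k-1})=d(L_C\FF^{k-1})\subseteq d\FF^{k-1}$, and symmetry yields $L_C(\sv)\subseteq\sv$, hence $K_j(x)(\sv)\subseteq\sv$.

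The hard part is the chain-map identity $L_C\circ d=d\circ L_C$; this is the one genuinely combinatorial step and demands a careful sign analysis comparing the two orders of first adjoining a hyperplane to a flag and then alternating over the elements of $C$. The computation is local, since both $d$ and $L_C$ involve only indices in or adjacent to $C$, and it organizes along the blocks $V_S$, on which $L_C$ is rank one with image spanned by $\xi_{C,S}=\sum_{l=1}^r(-1)^l a_{i_l}F(H_{i_1},\dots,\widehat{H_{i_l}},\dots,H_{i_r},H_{s_1},\dots,H_{s_{k-r+1}})$; the alternating signs $(-1)^m$ and $(-1)^l$ in \Ref{L_C} are precisely what force the two compositions to agree, as one checks directly in the extreme cases $r=k+1$ (so $S=\varnothing$) and $r=2$. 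Once this identity is established the rest of the argument is formal, and the initial reduction from $K_j(x)$ to the scalar-free operators $L_C$ keeps everything independent of the point $x$.
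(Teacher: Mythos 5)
The paper itself offers no proof of Theorem \ref{thm K sym} to compare against --- it is imported wholesale from \cite{V5} --- so your argument has to stand on its own. Its first half does: the reduction from $K_j(x)$ to the operators $L_C$ is legitimate (for $x\in\C^n-\Delta$ the coefficients $\la^C_j/f_C(x)$ in \Ref{K_j} are just scalars), and your block computation of symmetry is correct and complete, since for a normal-crossings arrangement the standard basis of $V$ is $S^{(a)}$-orthogonal with nonzero diagonal entries, $L_C$ kills every standard basis vector lying outside the blocks $V_S$ and maps each $V_S$ to itself, and inside a block $S^{(a)}(L_Ce_m,e_p)=(-1)^{m+p}\prod_{i\in C\cup S}a_i$ is visibly symmetric in $m$ and $p$.

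The second half has a genuine gap: you make the invariance of $\on{Sing}_a V$ rest entirely on the identity $L_C\circ d=d\circ L_C$ (in fact only its top-degree instance $L_C(d\FF^{k-1}(\A(x)))\subseteq d\FF^{k-1}(\A(x))$ is needed), and that identity is precisely what you do not prove --- you label it ``the hard part'' and verify only the extreme cases $r=2$ and $r=k+1$, which establishes nothing for general $r$. The identity is true, and the missing verification is a three-case analysis on $c=|\{j_1,\dots,j_p\}\cap C|$ for a standard basis vector $F$ of $\FF^{p}(\A(x))$. If $c\le r-3$, both compositions kill $F$ termwise. If $c=r-1$, say the intersection is $C_m$ with complementary set $S$, then $dL_CF$ and $L_CdF$ agree term by term, because for every $l$ the set $C_l\cup S\cup\{j\}$ is independent if and only if $C_m\cup S\cup\{j\}$ is (all the $C_l$ have the same span as $C$), while $j=i_m$ never contributes to $dF$ since $C$ is dependent. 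If $c=r-2$, say the intersection is $C-\{i_m,i_{m'}\}$ with $m<m'$, then $L_CF=0$, and in $L_CdF$ the only terms not killed outright are $j=i_m$ and $j=i_{m'}$; these two terms are either both absent from $dF$ (the sets $C_{m'}\cup S$ and $C_m\cup S$ are simultaneously dependent or independent, again by the equal-span argument), in which case $L_CdF=0$, or both present, in which case reordering the appended hyperplane into circuit position produces the multiples $(-1)^{p-m+1+m'}\xi_{C,S}$ and $(-1)^{p-m'+m}\xi_{C,S}$ of your vector $\xi_{C,S}$, which cancel. Without this computation (or a substitute for it) the first assertion of the theorem is unproved. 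A secondary point: Lemma \ref{lem orth Pr} is stated only for unbalanced $a$, whereas Theorem \ref{thm K sym} assumes nothing beyond $a\in(\C^\times)^n$; to use it in this generality you should observe that for a normal-crossings arrangement the dense edges are exactly the hyperplanes, so the hypothesis of Corollary \ref{lem on dim of sing} holds automatically, the dimension count behind Lemma \ref{lem orth Pr} goes through for every $a\in(\C^\times)^n$, and $S^{(a)}$ is nondegenerate on all of $V$ --- only then are the equality $\on{Sing}_a V=(d\FF^{k-1}(\A(x)))^{\perp}$ and the double-orthocomplement step available.
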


\subsection{Corollary of Theorem \ref{thm K sym}}

We obtain formulas for the action of $K_j$ on the marked elements
$v_{j_1,\dots,j_k}\in\Sing_a V$ from formulas for the action of $L_C$.

Let $C=\{i_1, \dots, i_r\}$ be a circuit and $v_{j_1,\dots,j_k}\in\Sing_a V$ a marked element.
If $|\{{j_1},\dots,{j_k}\}\cap C| < r-1$, then $L_C(v_{j_1,\dots,j_k})=0$.
If $\{{j_1},\dots,{j_k}\}\cap C = C_m$, then by   \Ref{skew marked v}
we have
$v_{j_1,\dots,j_k}
=
\pm v_{i_1,i_2,\dots,\widehat{i_{m}},\dots,i_{r-1},i_{r},s_1,\dots,s_{k-r+1}}$
with $\{{s_1},\dots,{s_{k-r+1}}\}= \{{j_1},\dots,{j_k}\}-C_m$.
We have
\bean
\label{L_C v}
L_C  : v_{i_1,i_2,\dots,\widehat{i_{m}},\dots,i_{r-1},i_{r},s_1,\dots,s_{k-r+1}}
\mapsto
(-1)^m {\sum}_{l=1}^{r} (-1)^l a_{i_l}
v_{i_1,\dots,\widehat{i_{l}},\dots,i_{r},s_1,\dots,s_{k-r+1}} .
\eean

\subsection{Gauss-Manin connection on  $(\cd)\times (\sv)\to \cd$}
\label{Construction}

The {\it master function} of  $(\A, a)$ is
$\Phi_{\A,a} = {\sum}_{j\in J}\,a_j \log f_j$,
a multivalued function on $U( {\A})$. Let $\kappa\in\C^\times$.
The function $e^{\Phi_{\A,a}/\kappa}$
defines a rank one local system $\mc L_\kappa$
on $U(\A)$ whose horizontal sections
over open subsets of $U(\A)$
 are univalued branches of $e^{\Phi_{\A,a}/\kappa}$ multiplied by complex numbers,
see, for example, \cite{SV, V2}.
The vector bundle
\bean
\sqcup_{x\in \C^n-\Delta}\,H_k(U(\A(x)), \mc L_\kappa\vert_{U(\A(x))})
 \to  \C^n-\Delta
 \eean
is called the {\it homology bundle}. The homology bundle has a canonical  flat Gauss-Manin connection.

For a fixed $x$, choose any $\gamma\in H_k(U(\A(x)), \mc L_\kappa\vert_{U(\A(x))})$.
The linear map
\bean
\label{int map}
\{\gamma\} \ {}:\ {} \OS^k(\A(x)) \to \C,  \qquad \om \mapsto \int_{\gamma} e^{\Phi_{\A,a}/\kappa} \om,
\eean
is an element of $\Sing\FF^k(\A(x))$ by Stokes' theorem.
It is known that for generic  $\kappa$
any element of $\Sing\FF^k(\A(x))$ corresponds to a certain
$\gamma$ and  in that case this construction  gives the {\it integration isomorphism}
\bean
\label{iSO}
H_k(U(\A(x)), \mc L_\kappa\vert_{U(\A(x))}) \to \on{Sing}_a\FF^k(\A(x)),
\eean
 see \cite{SV}. The precise values of $\kappa$, such that \Ref{iSO} is an isomorphism, can be deduced
 from the determinant formula in \cite{V1}.

For generic $\kappa$ the fiber isomorphisms \Ref{iSO} define an isomorphism of the homology bundle and
the combinatorial bundle \Ref{comb Bundle}. The  Gauss-Manin connection induces a  connection on the combinatorial bundle.
That connection on the combinatorial bundle will  also be  called the {\it Gauss-Manin connection}.

Thus, there are two connections on the combinatorial bundle: the combinatorial connection and the Gauss-Manin
connection depending on  $\kappa$. In this situation we  consider the differential equations
for flat sections of the Gauss-Manin connection with respect to the combinatorially flat standard basis. Namely,
let $\gamma(x) \in H_k(U(\A(x)), \mc L_\kappa\vert_{U(\A(x))})$ be a flat section of the Gauss-Manin connection.
Let us write the corresponding section $I_\gamma(x)$ of the bundle $(\cd)\times \Sing_a V\to \cd$
in the combinatorially flat standard basis,
\bean
\label{Ig}
&&
{}
\\
\notag
&&
I_\gamma(x) =\!\!\!
\sum_{{\rm independent } \atop \{j_1 < \dots < j_k\} \subset J }\!\!\!
I_\gamma^{j_1,\dots,j_k}(x)
 F(H_{j_1}, \dots , H_{j_k}),
 \quad
 I_\gamma^{j_1,\dots,j_k}(x)
=
\int_{\gamma(x)} e^{\Phi_{\A,a}/\kappa}
 \omega_{j_1} \wedge \dots \wedge \omega_{j_k}.
\eean
By Theorem \ref{thm K sym}, we can also write
\bean
\label{Ig v}
I_\gamma(x) =
{\sum}_{{\rm independent } \atop \{j_1 < \dots < j_k\} \subset J }
I_\gamma^{j_1,\dots,j_k}(x)
v_{j_1, \dots , j_k}.
\eean
For $I=\sum I^{j_1,\dots,j_k}v_{j_1, \dots , j_k}$ and $j\in J$, we denote
$\frac{\der I}{\der z_j} = \sum \frac{\der I^{j_1,\dots,j_k}}{\der z_j}v_{j_1, \dots , j_k}$.
This formula defines the combinatorial connection on the combinatorial bundle.

\begin{thm} [\cite{V2, V5}]
\label{thm ham normal}   The  section $I_\ga$ satisfies the differential equations
\bean
\label{dif eqn}
\kappa \frac{\der I}{\der z_j}(x) = K_j(x)I(x),
\qquad
j\in J,
\eean
where  $K_j(x)$ are the linear operators defined in \Ref{K_j}.
\qed
\end{thm}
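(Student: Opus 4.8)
The plan is to verify the vector identity $\kappa\,\partial I_\ga/\partial z_j = K_j I_\ga$ directly in $V=\FF^k(\A(x))$ by differentiating the hypergeometric integrals \Ref{Ig} and reducing the resulting $k$-forms to the standard basis of $\OS^k$, where the combinatorics of the operators $L_C$ from \Ref{L_C} emerges. Both sides lie in $\Sing_a V$: the left side because $I_\ga\in\Sing_a V$ and the combinatorial connection preserves this subbundle, the right side by Theorem \ref{thm K sym}. In the combinatorial trivialization the standard basis $(H_{i_1},\dots,H_{i_k})$ of $\OS^k$ is independent of $x$, so pairing the desired identity against it reduces everything to the family of scalar identities for $\kappa\,\partial_{z_j}I_\ga^{i_1,\dots,i_k}$, indexed by independent ordered sets $I=\{i_1<\dots<i_k\}$.

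First I would pass to the total space $\C^n\times\C^k$ and use the \emph{full} logarithmic forms $\tilde\om_l=df_l/f_l$, whose restriction to the fibre $\pi^{-1}(x)$ is the fibre form $\om_l$. Since $\ga(x)$ is a flat section of the Gauss--Manin connection, differentiation commutes with integration; writing $\mathcal L_{\partial_{z_j}}=d\,\iota_{\partial_{z_j}}+\iota_{\partial_{z_j}}d$, using $d(\tilde\om_{i_1}\wedge\dots\wedge\tilde\om_{i_k})=0$, and discarding the $d$-exact piece by Stokes' theorem for the twisted cycle $\ga(x)$, I obtain
\[
\kappa\,\partial_{z_j}I_\ga^{i_1,\dots,i_k}
= \int_{\ga(x)} e^{\Phi_{\A,a}/\kappa}\,
\Big(\iota_{\partial_{z_j}}\big(d\Phi_{\A,a}\wedge\tilde\om_{i_1}\wedge\dots\wedge\tilde\om_{i_k}\big)\Big)\Big|_{\pi^{-1}(x)} .
\]
Expanding the interior product with $\iota_{\partial_{z_j}}d\Phi_{\A,a}=a_j/f_j$ (see \Ref{derivative}) and restricting to the fibre turns the integrand into $\theta^I_j:=(a_j/f_j)\,\om_{i_1}\wedge\dots\wedge\om_{i_k}-d_t\Phi_{\A,a}\wedge\beta_j$, where $d_t$ is the fibre differential, $d_t\Phi_{\A,a}=\sum_l a_l\om_l$, and $\beta_j=\big(\iota_{\partial_{z_j}}(\tilde\om_{i_1}\wedge\dots\wedge\tilde\om_{i_k})\big)\big|_{\pi^{-1}(x)}$ vanishes unless $j\in\{i_1,\dots,i_k\}$. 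Thus $\theta^I_j$ is a $k$-form on $\C^k$ with poles along the $f_l$.

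The core is a purely cohomological reduction: modulo forms $d_t(e^{\Phi_{\A,a}/\kappa}\alpha)$, which integrate to zero over $\ga(x)$, one must rewrite $\theta^I_j$ in the standard basis $\{\om_{i'_1}\wedge\dots\wedge\om_{i'_k}\}$. The mechanism producing the fibre-constant coefficients $\la^C_j/f_C$ is the circuit relation: by Lemma \ref{lem rel for z f}, for each circuit $C$ one has $\sum_{i\in C}\la^C_i f_i=f_C$, a function of $z$ alone, whence on each fibre $\sum_{i\in C}\la^C_i f_i\,\om_i=0$ and, by partial fractions, a denominator such as $f_i-f_{i'}$ collapses to the fibre-constant $f_C$. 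Carrying out this reduction, the alternating sums over the elements of $C$ weighted by the $a_{i_l}$ (which enter through the factor $d_t\Phi_{\A,a}$) reproduce exactly the operator $L_C$ of \Ref{L_C}, and the collected coefficients assemble, via \Ref{K_j} and the packaging identity \Ref{LK}, into the matrix of $K_j$ on the standard basis. Matching these coefficients entry by entry is the main obstacle: it is entirely sign- and index-bookkeeping, with no further analytic input. Already the case $k=1$, where $f_i-f_{i'}=f_{\{i,i'\}}$ gives $\kappa\,\partial_{z_j}I^{i}=\frac{a_j}{z_i-z_j}(I^{j}-I^{i})$ in agreement with \Ref{K_j}, exhibits the whole mechanism.

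Finally, the verified scalar identities $\kappa\,\partial_{z_j}I_\ga^{i_1,\dots,i_k}=\langle K_j I_\ga,(H_{i_1},\dots,H_{i_k})\rangle$ hold for all independent $I$; since $(H_{i_1},\dots,H_{i_k})$ range over a basis of $\OS^k=(\FF^k)^*$ and both $\kappa\,\partial I_\ga/\partial z_j$ and $K_j I_\ga$ lie in $V$, these pairings determine the two vectors completely. Hence $\kappa\,\partial I_\ga/\partial z_j=K_j I_\ga$, which is \Ref{dif eqn}.
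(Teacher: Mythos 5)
The paper itself offers no proof of Theorem \ref{thm ham normal}: the statement is quoted from \cite{V2, V5} and closed with a QED box, so the only meaningful comparison is with the argument in those references, which is exactly what your sketch reconstructs. Your analytic frame is correct: differentiation under the integral sign using flatness of $\ga(x)$, Cartan's formula together with $d(\tilde\om_{i_1}\wedge\dots\wedge\tilde\om_{i_k})=0$, twisted Stokes to discard the exact piece, and reduction of the resulting rational $k$-form on the fibre; your $k=1$ check is also consistent with \Ref{L_C} and \Ref{K_j}.

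The caveat concerns the step you defer as ``sign- and index-bookkeeping'', because that identity \emph{is} the theorem; two points there must be made precise. (i) The reduction has to close up as an exact identity of rational forms, not merely ``modulo $d_t(e^{\Phi_{\A,a}/\kappa}\al)$'' as you propose: reducing modulo twisted-exact forms trades $d_t\Phi_{\A,a}\wedge\beta_j$ for $-\kappa\,d_t\beta_j$ and so would a priori produce $\kappa$-dependent coefficients, whereas $K_j(x)$ is $\kappa$-independent. Fortunately no such reduction is needed: when $j\in I'=\{i_1,\dots,i_k\}$, the $l=j$ term of $d_t\Phi_{\A,a}\wedge\beta_j$ cancels the second-order-pole term $\frac{a_j}{f_j}\om_{i_1}\wedge\dots\wedge\om_{i_k}$ exactly, leaving $-\sum_{l\notin I'}a_l\,\om_l\wedge\beta_j$. (ii) In what remains, only those $l$ contribute for which the unique circuit $C\subset\{l\}\cup I'$ contains $j$; otherwise the numerator $dg_l\wedge dg_{I'-\{j\}}$ is a wedge over a dependent set and vanishes identically. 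For the contributing terms, the partial-fraction identity $\prod_{i\in C}f_i^{-1}=f_C^{-1}\sum_{i\in C}\la^C_i\prod_{i'\in C,\,i'\ne i}f_{i'}^{-1}$ combined with $\sum_{i\in C}\la^C_i\,dg_i=0$ produces coefficients $\pm\,\la^C_j a_l/f_C$ (and $\pm\,\la^C_j a_j/f_C$ when $j\notin I'$) --- in each case $\la^C_j/f_C$ times the weight of the unique element of $C$ missing from $I'$ --- which are precisely the matrix entries of $K_j$ coming from \Ref{L_C} and \Ref{K_j}. With these two points supplied the argument is complete; as written, your sketch asserts rather than verifies exactly the identity being proved, though the mechanism you name is the right one and does go through.
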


On the Gauss-Manin connection and these differential equations see also \cite{CO}.

\subsection{Critical set}
\label{Critical set}

Denote by $C_{\A,a}$ the critical set of $\Phi_{\A,a}$ in the $\C^k$-direction,
\bean
\label{crit C new}
C_{\A,a} =\{ (x,u)\in U(\A)\subset \C^n\times\C^k\ \big|\ \frac{\der\Phi_{\A,a}}{\der t_i}(x,u)=0\ \on{for}\ i=1,\dots,k\}.
\eean

\begin{lem}
\label{lem crit smooth}
If  $C_{\A,a}$ is nonempty, then it is a smooth $n$-dimensional subvariety of $U(\A)$.
\end{lem}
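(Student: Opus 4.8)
The statement is that $C_{\A,a}$, the critical set of $\Phi_{\A,a}$ in the $\C^k$-direction inside $U(\A)\subset\C^n\times\C^k$, is smooth of dimension $n$ whenever it is nonempty. The natural approach is to exhibit $C_{\A,a}$ as the graph of a function over the base $\C^n$, or at least locally so, which simultaneously gives smoothness and the dimension count. Concretely, I would consider the map $\sigma:\C^n\times\C^k\supset U(\A)\to\C^k$ whose $i$-th component is $\frac{\der\Phi_{\A,a}}{\der t_i}=\sum_{j=1}^n b^i_j\frac{a_j}{f_j}$ (using \Ref{derivative}), so that $C_{\A,a}=\sigma^{-1}(0)$. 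To prove $C_{\A,a}$ is a smooth $n$-dimensional variety it suffices to show that $0$ is a regular value of $\sigma$, i.e. that at every point $(x,u)\in C_{\A,a}$ the differential $d\sigma$ has full rank $k$; the fibre then has dimension $(n+k)-k=n$ and is smooth by the implicit function theorem / Jacobian criterion.

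First I would write out the Jacobian of $\sigma$. Rather than differentiate in all $n+k$ directions, the key observation is that differentiating $\frac{\der\Phi_{\A,a}}{\der t_i}$ with respect to the base coordinate $z_j$ is controlled by $\frac{\der}{\der z_j}\frac{a_j}{f_j}=-\frac{a_j}{f_j^2}b^i_j$, using $f_j=g_j+z_j$ and \Ref{derivative}. Thus the $k\times n$ block of partial derivatives $\bigl(\frac{\der^2\Phi_{\A,a}}{\der t_i\,\der z_j}\bigr)$ has entries $-b^i_j\frac{a_j}{f_j^2}$, which is the matrix $(b^i_j)$ with columns scaled by the nonzero numbers $-a_j/f_j^2$. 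Since by hypothesis the linear functions $g_j$, i.e. the rows of data $b^i_j$, span $(\C^k)^*$, the $k\times n$ matrix $(b^i_j)$ has rank $k$; scaling nonzero columns preserves the row rank, so already the $z$-block of $d\sigma$ has rank $k$. This is the crux, and it is essentially the essentiality/spanning hypothesis of Section \ref{An arrangement in} ("we assume that the functions $g_j$ span $(\C^k)^*$"), combined with $a_j\ne0$ and $f_j\ne0$ on $U(\A)$.

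Therefore $d\sigma$ has rank exactly $k$ at every point of $U(\A)$, hence at every point of $C_{\A,a}$, so $0$ is a regular value and $C_{\A,a}=\sigma^{-1}(0)$ is smooth of dimension $n$ wherever it is nonempty. I would also note that smoothness together with the surjectivity of the $z$-derivative block shows that the projection $\pi:C_{\A,a}\to\C^n$ is a submersion, which matches the later use of $C_{\A,a}$ as a family over the base; but for the lemma itself only the regular-value computation is needed.

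The main obstacle is not conceptual but bookkeeping: one must be careful that the relevant Jacobian block is the mixed block $\frac{\der^2\Phi}{\der t_i\,\der z_j}$ rather than the Hessian $\frac{\der^2\Phi}{\der t_i\,\der t_j}$ in the $t$-directions. It is tempting to try to use nondegeneracy of critical points (the Hessian, \Ref{Hess f}), but that would only give smoothness at nondegenerate points and would fail at degenerate critical points that occur on the discriminant. The elegant point is that smoothness of the \emph{total} set $C_{\A,a}$ over $\C^n$ comes entirely from the $z$-derivatives and the spanning condition on $(g_j)$, so no genericity or unbalancedness assumption on $a$ is required — consistent with the lemma's hypotheses.
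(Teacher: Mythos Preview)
Your proof is correct and takes essentially the same approach as the paper: both compute the mixed block $\frac{\der^2\Phi_{\A,a}}{\der t_i\,\der z_j}=-b^i_j\,a_j/f_j^2$ and use the spanning hypothesis on $(g_j)_{j\in J}$ (together with $a_j\ne0$, $f_j\ne0$ on $U(\A)$) to conclude this $k\times n$ block has rank $k$, so $0$ is a regular value and $C_{\A,a}$ is smooth of dimension $n$. The paper phrases it by exhibiting a nonvanishing $k\times k$ minor $\det_{i,l}\bigl(\frac{\der^2\Phi_{\A,a}}{\der z_{j_l}\der t_i}\bigr)=(-1)^k d_{j_1,\dots,j_k}\prod_l a_{j_l}/f_{j_l}^2$, which is exactly your rank argument specialized to an independent $k$-subset $\{j_1,\dots,j_k\}$.
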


\begin{proof} For $j_1,\dots,j_k\in J$, we have
\bea
\on{det}_{i,l=1}^k \big( \frac{\der^2\Phi_{\A,a}}{\der z_{j_l}\der t_i}\big) = (-1)^k\on{det}_{i,l=1}^k (b^i_{j_l})\prod_{l=1}^k \frac{a_{j_l}}{f_{j_l}^2}.
\eea
Since $(g_j)_{j\in J}$ span $(\C^k)^*$, there exists $j_1,\dots,j_k\in J$ such that $ \on{det}_{i,l=1}^k (b^i_{j_l})\ne 0$.
\end{proof}

\begin{lem}
\label{lem finite}
If $a\in(\C^\times)^n$ is generic, then
\begin{enumerate}
\item[(i)]
 every fiber of the projection
$\pi|_{C_{\A,a}}:C_{\A,a}\to \C^n$ is finite;
\item[(ii)]
for any $x\in \C^n$, the number of points of $C_{\A,a}$ in the fiber over $x$, counted with their Milnor numbers, equals $|\chi(U(\A(x)))|$;

\item[(iii)]
for generic $x\in \C^n$, each of the points of $C_{\A,a}$ in the fiber over $x$ is nondegenerate.

\end{enumerate}

\end{lem}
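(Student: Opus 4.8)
The plan is to recognize the fiber of $\pi|_{C_{\A,a}}$ over $x\in\C^n$ as the critical set of the master function of the single arrangement $\A(x)\subset\C^k$, and then to invoke Lemma \ref{lem crit 1}. By \Ref{derivative} one has $\frac{\der\Phi_{\A,a}}{\der t_i}(x,u)=\sum_{j}b^i_j\,a_j/(g_j(u)+x_j)=\frac{\der\Phi_{\A(x),a}}{\der t_i}(u)$, so the fiber over $x$ is exactly $\{x\}\times C_{\A(x),a}$, with the Milnor numbers of corresponding points agreeing. Hence parts (i) and (ii) reduce to the single assertion that, for generic $a$, \emph{every} weighted arrangement $(\A(x),a)$, $x\in\C^n$, is unbalanced: Lemma \ref{lem crit 1} then gives that each such critical set is isolated with Milnor numbers summing to $|\chi(U(\A(x)))|$, and as each Milnor number is at least $1$ this sum bounds the number of points, yielding both finiteness in (i) and the count in (ii).

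First I would secure this uniform unbalancedness. Unbalancedness of $(\A(x),a)$ requires $\sum_{j\in S}a_j\ne0$ (reading $a_\infty=-\sum_{j\in J}a_j$) for each index set $S\subset J\cup\{\infty\}$ of a dense edge of $\A(x)^\vee$. Because the hyperplanes of $\A(x)$ are parallel translates of one fixed family, only finitely many subsets $S$ arise as dense-edge index sets as $x$ runs over $\C^n$; this produces finitely many linear conditions $\sum_{j\in S}a_j\ne0$, each defining a Zariski-open subset of weight space. Their intersection $\mc U$ is Zariski-open, and it is nonempty since, as recorded after Lemma \ref{lem crit 1}, all-positive weights are unbalanced for any arrangement and hence lie in $\mc U$. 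For $a\in\mc U$ every $(\A(x),a)$ is unbalanced simultaneously, which is the genericity needed for (i) and (ii).

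For (iii) I would argue geometrically instead of manipulating the Hessian by hand. By Lemma \ref{lem crit smooth} the variety $C_{\A,a}$ is smooth of pure dimension $n$ (the empty case being vacuous), and by (i) the map $\pi:C_{\A,a}\to\C^n$ has finite fibers; the fiber-dimension theorem then forces $\dim\overline{\pi(C_{\A,a})}=n$, so $\pi$ is dominant. Since we work in characteristic zero, generic smoothness yields a dense Zariski-open $V\subset\C^n$ over which $\pi$ is smooth, hence \'etale as the fibers are $0$-dimensional. The final link is to identify \'etaleness with nondegeneracy: at $(x,u)\in C_{\A,a}$ the tangent space is the kernel of the differential of $\big(\frac{\der\Phi_{\A,a}}{\der t_1},\dots,\frac{\der\Phi_{\A,a}}{\der t_k}\big)$, and $d\pi$ carries it isomorphically onto $\C^n$ exactly when the block $\big(\frac{\der^2\Phi_{\A,a}}{\der t_i\der t_l}\big)_{i,l=1}^k$ is invertible, i.e. when $\Hess_{\A,a}(x,u)\ne0$. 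Thus $\pi$ is \'etale at $(x,u)$ iff $u$ is a nondegenerate critical point of $\Phi_{\A(x),a}$, and for every $x\in V$ all points of the fiber are nondegenerate, which is (iii).

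I expect the main obstacle to be the passage to (iii): one must check that $\pi$ is genuinely dominant (so that generic smoothness applies and produces a dense open of the \emph{target}, not merely of the source) and must translate the algebro-geometric \'etaleness back into the analytic nondegeneracy condition via the implicit function theorem computation above. A secondary point to handle with care is the uniform choice of $a$ over all of $\C^n$, including the discriminant $\Delta$, where the combinatorial type of $\A(x)$ — and thus the relevant collection of dense edges — can jump.
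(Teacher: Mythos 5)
Your proof is correct. For parts (i) and (ii) it follows essentially the paper's route: the paper's entire proof is the citation ``the lemma follows from Theorem \ref{thm V,OT,S} and Lemma \ref{lem crit 1}'', and its intended reading for (i)--(ii) is exactly your reduction --- the fiber over $x$ is $\{x\}\times C_{\A(x),a}$, so Lemma \ref{lem crit 1} applies once $(\A(x),a)$ is unbalanced; your check that unbalancedness can be arranged by one Zariski-open condition on $a$ simultaneously for all $x$ (only finitely many subsets of $J\cup\{\infty\}$ can occur as dense-edge index sets, and positive weights satisfy all the resulting conditions) supplies the uniformity that the paper leaves implicit. For part (iii), however, you take a genuinely different route: the paper invokes Theorem \ref{thm V,OT,S} (for a \emph{fixed} arrangement and generic $a$, all critical points are nondegenerate and number $|\chi(U(\A))|$), whereas you keep $a$ fixed in your open set and get genericity in $x$ from generic smoothness of the dominant finite-fibered projection $\pi|_{C_{\A,a}}$ (using Lemma \ref{lem crit smooth}), together with the dictionary ``$\pi$ \'etale at $(x,u)$ iff $\on{Hess}_{\A,a}(x,u)\ne 0$''. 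This buys something concrete: Theorem \ref{thm V,OT,S} produces a generic set of weights that a priori depends on the arrangement $\A(x)$, i.e.\ on $x$, so the citation-based proof of (iii) silently requires a quantifier exchange (``for generic $a$, for generic $x$'') that needs a constructibility or semicontinuity argument; your version avoids this issue entirely, proves the lemma for every weight in the explicit open set from (i)--(ii), and the \'etale-versus-nondegenerate identification you set up is the same mechanism the paper itself relies on later (Lemma \ref{lem diff}, Theorem \ref{char var thm}). The trade-off is only length: the paper's proof is one line resting on the cited theorems, while yours is self-contained modulo standard characteristic-zero generic smoothness.
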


\begin{proof}
 The lemma follows from Theorem \ref{thm V,OT,S} and Lemma \ref{lem crit 1}.
 \end{proof}

Let $\mc O(C_{\A,a})$  be the algebra of regular functions on $C_{\A,a}$ and
$\mc O(C_{\A(x),a})$ the algebra of regular functions on $C_{\A(x),a} = C_{\A,a}\cap \pi^{-1}(x)$. Namely,
for $x=(x_1,\dots,x_n)\in\C^n$, let $I_{\A(x),a}$ be the ideal in $\mc O(U(\A(x)))$ generated by
$\frac{\der\Phi_{\A,a}}{\der t_i}, i=1,\dots,k$. We set
\bean
\label{OCx}
\mc O(C_{\A(x),a}) = \mc O(U(\A(x)))/I_{\A(x),a}\, .
\eean
Assume that the weight $a$ is such that the pair $(\A(x),a)$ is unbalanced for some $x\in\cd$. Then
$\dim \OCx = |\chi(U(\A(x)))|$ for every $x\in\cd$ and we obtain the  vector {\it bundle of algebras}
\bean
\label{algebra bundle}
\sqcup_{x\in\C^n-\Delta} \OCx \to \C^n-\Delta\,.
\eean
For $x\in\cd$, consider  the canonical element $E(x)$ of the arrangement $\A(x)$ and its
image $[E(x)]$ in $\OCx\otimes\sv$, see Lemma \ref{lem v sing}.
Recall the canonical isomorphism  \Ref{map alpha},
\bean
\label{fiber iso}
\mc E(x)  : \OCx \to \sv .
\eean
This fiber isomorphism establishes an isomorphism $\mc E$ of the bundles \Ref{algebra bundle} and  \Ref{comb Bundle}.
The isomorphism $\mc E$ and the combinatorial and Gauss-Manin connections on the bundle \Ref{comb Bundle}
induce two connections on the bundle \Ref{algebra bundle} which will also be called  the {\it combinatorial and Gauss-Manin
connections}, respectively.

\smallskip

\begin{thm}[\cite{V5}]
\label{K/f}
If  the pair $(\A(x),a)$ is unbalanced for $x\in\cd$, then for all $j\in J$, we have
\bean
\mc E(x) \circ \Big[\frac{a_j}{f_j}\Big] *_x  = K_j(x) \circ \mc E(x)\,,
\eean
where $\big[\frac{a_j}{f_j}\big] *_x$ is the operator of multiplication by
$\big[\frac{a_j}{f_j}\big]$ in $\OCx$ and $ K_j(x) : \sing_aV\to\sing_aV$ is the operator defined
in \Ref{K_j}.
\qed
\end{thm}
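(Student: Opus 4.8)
The plan is to reduce the claimed operator identity to the statement that every special (Bethe) vector $\on{F}(u)$ is an eigenvector of $K_j(x)$ with eigenvalue $\frac{a_j}{f_j(u)}$, and then to prove that eigenvalue relation.

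First I would settle the case of a generic weight $a$. By Theorem \ref{thm V,OT,S} and Lemma \ref{lem finite} all critical points of $\Phi_{\A,a}$ in the fiber over $x$ are then nondegenerate, so $\OCx=\oplus_{u\in C_{\A(x),a}}\OCx_u$ with each local algebra one-dimensional and spanned by the idempotent $g_u$ (the function equal to $1$ at $u$ and $0$ at the other critical points). Multiplication in $\OCx$ is pointwise, so $\big[\frac{a_j}{f_j}\big]*_x g_u=\frac{a_j}{f_j(u)}g_u$, while $\mc E(x)g_u=\on{F}(u)/\Hess_{\A,a}(u)$ by the computation recalled in Section \ref{proofs}. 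Evaluating both sides of the asserted identity on the basis $\{g_u\}$ and cancelling the common factor $1/\Hess_{\A,a}(u)$, the identity $\mc E(x)\circ\big[\frac{a_j}{f_j}\big]*_x=K_j(x)\circ\mc E(x)$ becomes equivalent to
\be
K_j(x)\,\on{F}(u)\,=\,\frac{a_j}{f_j(u)}\,\on{F}(u)\qquad\text{for every}\ u\in C_{\A(x),a}.
\ee

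To establish this eigenvalue relation I would use the Gauss--Manin differential equations of Theorem \ref{thm ham normal} together with the stationary-phase asymptotics of the hypergeometric integrals as $\kappa\to0$. For a nondegenerate critical point $u$ one takes a steepest-descent cycle $\gamma_u(x)$ through $u$; the Laplace method gives $I_{\gamma_u}(x)=c_u(\kappa)\,e^{\Phi_{\A,a}(u)/\kappa}\,\on{F}(u)\,(1+O(\kappa))$ with a nonzero scalar $c_u(\kappa)$, the leading flag-vector being $\on{F}(u)=\sum_m f_{H^m}(u)F_m$ by \Ref{formula for v(t)}. Substituting into $\kappa\frac{\der I}{\der z_j}=K_j(x)I$ and comparing leading orders, the $z_j$-derivative falls on the exponent and produces the factor $\frac{d}{dz_j}\Phi_{\A,a}(x,u(x))$; since $u(x)$ is critical in the $t$-direction, the $t$-derivatives drop out and this total derivative of the critical value equals the partial derivative $\frac{\der\Phi_{\A,a}}{\der z_j}(x,u(x))=\frac{a_j}{f_j(u)}$ by \Ref{derivative}. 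Cancelling $c_u(\kappa)e^{\Phi_{\A,a}(u)/\kappa}$ then yields the relation above. Alternatively, the relation can be checked purely algebraically from the definition \Ref{K_j} of $K_j$ and the action \Ref{L_C} of the $L_C$: only circuits $C$ with $|\{j_1,\dots,j_k\}\cap C|=|C|-1$ contribute, and collecting these terms reduces the claim to the critical point equation $\om^{(a)}|_u=0$, i.e. $\sum_i\frac{a_i}{f_i(u)}g_i=0$, expanded in the circuit relations of Lemma \ref{circ generate all}.

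Finally I would pass from generic $a$ to an arbitrary unbalanced weight by the continuity argument already used in Section \ref{proofs}: with $x\in\cd$ fixed, the operators $\big[\frac{a_j}{f_j}\big]*_x$, the isomorphism $\mc E(x)$, and $K_j(x)$ all depend algebraically on $a$, and $\dim\OCx=|\chi(U(\A(x)))|$ is constant on the Zariski-open set of unbalanced weights; since the identity holds on the dense subset of generic weights, it holds for all unbalanced $a$. The main obstacle is the eigenvalue relation itself: the analytic route requires controlling the $O(\kappa)$ remainder and knowing that suitable descent cycles realize enough of $\on{Sing}_a\FF^k(\A(x))$, while the algebraic route requires careful sign bookkeeping in \Ref{L_C} and a precise decomposition of the critical point relation into circuit relations.
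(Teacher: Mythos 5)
The first thing to note is that the paper contains no proof of Theorem \ref{K/f}: it is stated as a quotation from \cite{V5} with the proof omitted, so the only internal material to compare with is the deformation template of Section \ref{proofs}. Your architecture matches that template (and the actual proof in \cite{V5}) exactly: fix $x\in\cd$, take $a$ generic so that all critical points of $\Phi_{\A(x),a}$ are nondegenerate (this follows from Theorem \ref{thm V,OT,S} applied to the fixed arrangement $\A(x)$; Lemma \ref{lem finite}(iii) alone is not quite the right citation, since it requires $x$ generic as well), use the idempotents $g_u$ together with $\mc E(x)\,g_u=\on{F}(u)/\Hess_{\A,a}(u)$ to convert the operator identity into the eigenvector relation $K_j(x)\on{F}(u)=\frac{a_j}{f_j(u)}\,\on{F}(u)$ for all $u\in C_{\A(x),a}$, and then pass to arbitrary unbalanced $a$ by the same continuity-in-$a$ argument the paper uses for Theorems \ref{thm alpha} and \ref{1st main thm}. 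The reduction and the continuity step are correct as stated.

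The genuine gap is the eigenvector relation itself, which you assert but do not prove. It is not a lemma available anywhere in the paper; it is precisely the main (``Bethe ansatz'') theorem of \cite{V4, V5}, i.e., it carries the entire content of Theorem \ref{K/f} once your (easy) reduction is made. Of your two sketches, the algebraic one is the route that actually works: in a normal-crossing fiber only circuits $C$ with $|C\cap\{j_1,\dots,j_k\}|=|C|-1$ act nontrivially on the relevant basis vectors, and the coefficient-by-coefficient comparison rests on $\sum_{i\in C}\la^C_i f_i=f_C$ together with the critical point equations; but the ``careful sign bookkeeping'' you defer is exactly the nontrivial computation, and nothing in the present paper substitutes for it. The analytic sketch is in worse shape: to run it you need a Gauss--Manin--flat family of cycles in $H_k(U(\A(x)),\mc L_\kappa|_{U(\A(x))})$ whose hypergeometric integrals admit the stated Laplace expansion with nonzero leading constant, uniformly and differentiably in $x$ near the given point; Theorem \ref{thm ham normal} supplies only the differential equations, not these asymptotics. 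So what you have is a correct reduction of Theorem \ref{K/f} to the key theorem of the cited reference \cite{V5}, together with the paper's own continuation argument, but not a self-contained proof.
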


\begin{rem}
Recall that ${a_j/f_j} = {\der\Phi_{\A,a}/\der z_j}$ and the elements $ [{a_j/f_j}]$, $j\in J$, generate the algebra $\OCx$.
Theorem \ref{K/f} says that under the isomorphism $\mc E(x)$ the operators of multiplication $[{a_j/f_j}]*_x$
in $\OCx$ are identified with the operators
$K_j(x)$ in the Gauss-Manin differential equations \Ref{dif eqn}, cf. Section \ref{versus}. The correspondence of Theorem \ref{K/f}
defines
 a commutative algebra structure on $\sing_aV$, the structure depending on $x$. The multiplication in this commutative algebra is generated by
 the operators $K_j(x), j\in J$.  The correspondence of Theorem \ref{K/f}
  also defines the Gauss-Manin  differential equations on the bundle of algebras
in terms of the multiplication in the fiber algebras.

Notice that the relations between the operators $K_j(x)$ coincide with the relations among the elements $[a_j/f_j]$ in $C_{\A(x), a}$.

\end{rem}

\subsection{Formulas for multiplication}
\label{Formulas for multiplication}

For $x\in\cd$ and a circuit $C=\{i_1, \dots, i_r\}\subset J$, we
define the linear operator $L_C : \OCx\to \OCx$ as follows.
Let $w_{j_1,\dots,j_k}\in\OCx$ be a marked element.
If $|\{{j_1},\dots,{j_k}\}\cap C| < r-1$, then we set $L_C(w_{j_1,\dots,j_k})=0$.
If $\{{j_1},\dots,{j_k}\}\cap C = C_m$, then by \Ref{skew marked}
we have
$w_{j_1,\dots,j_k}=$
$\pm\, w_{i_1,i_2,\dots,\widehat{i_{m}},\dots,i_{r-1},i_{r},s_1,\dots,s_{k-r+1}}$
with $\{{s_1},\dots,{s_{k-r+1}}\}= \{{j_1},\dots,{j_k}\}-C_m$.
We set
\bean
\label{L_C W}
\quad
L_C (w_{i_1,i_2,\dots,\widehat{i_{m}},\dots,i_{r-1},i_{r},s_1,\dots,s_{k-r+1}}) =
(-1)^m {\sum}_{l=1}^{r} (-1)^l a_{i_l}
w_{i_1,\dots,\widehat{i_{l}},\dots,i_{r},s_1,\dots,s_{k-r+1}} ,
\eean
cf. formula \Ref{L_C v}. For $j\in J$, we define the operator $K_j(x) : \OCx\to \OCx$ by the formula
\bean
\label{K_j on w}
K_j(x) \ = \ {\sum}_{C\in \frak C}
\,\frac{\la_j^C}{f_C(x)} \,L_C ,
\eean
cf. formula \Ref{K_j}.

\begin{cor}
\label{cor *_x K_j}
If  the pair $(\A(x),a)$ is unbalanced for $x\in\cd$, then the operator of multiplication
$\big[\frac{a_j}{f_j}\big]*_x$  in $\OCx$ equals the operator
$K_j(x):\OCx\to\OCx$
defined  in \Ref{K_j on w}.
\qed
\end{cor}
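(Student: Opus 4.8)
The plan is to deduce the corollary directly from Theorem~\ref{K/f} by transporting the combinatorially defined operators $K_j(x)$ of \Ref{K_j} through the canonical isomorphism $\mc E(x):\OCx\to\sv$. The crucial observation is that the two families of marked elements, $\{w_{j_1,\dots,j_k}\}\subset\OCx$ and $\{v_{j_1,\dots,j_k}\}\subset\sv$, correspond to one another under $\mc E(x)$ up to the single uniform constant $(-1)^k$: by \Ref{marked correspondence} we have $\mc E(x):w_{j_1,\dots,j_k}\mapsto(-1)^kv_{j_1,\dots,j_k}$. Since $\mc E(x)$ is already known to be an isomorphism (Theorem~\ref{thm alpha}), and since on both sides the marked elements span the space subject to the same marked relations, it suffices to match the two descriptions of $K_j(x)$ on these spanning sets.

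First I would compare the defining formulas. On $\sv$ the operator $L_C$ acts on marked elements by \Ref{L_C v}, while on $\OCx$ the operator $L_C$ of \Ref{L_C W} acts by an identical expression, with the very same coefficients $(-1)^m(-1)^l a_{i_l}$. Because $\mc E(x)$ rescales every marked element by one and the same constant $(-1)^k$, that constant factors out of both sides and the two operators intertwine: $\mc E(x)\circ L_C=L_C\circ\mc E(x)$ for each circuit $C\in\frak C$. This single identity also confirms that \Ref{L_C W} genuinely defines a linear operator on $\OCx$, a point that is not automatic since the marked elements form only a spanning set and not a basis: it exhibits the $\OCx$-operator as the $\mc E(x)$-transport of the already well-defined operator on $\sv$. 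Multiplying by $\la_j^C/f_C(x)$ and summing over $C\in\frak C$ as in \Ref{K_j} and \Ref{K_j on w} then yields $\mc E(x)\circ K_j(x)=K_j(x)\circ\mc E(x)$, where the left operator is the one on $\OCx$ from \Ref{K_j on w} and the right one is the operator on $\sv$ from \Ref{K_j}.

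Finally I would invoke Theorem~\ref{K/f}, which gives $\mc E(x)\circ\big[\frac{a_j}{f_j}\big]*_x=K_j(x)\circ\mc E(x)$ with $K_j(x)$ the operator on $\sv$. Comparing this with the intertwining just established produces $\mc E(x)\circ\big[\frac{a_j}{f_j}\big]*_x=\mc E(x)\circ K_j(x)$ as maps $\OCx\to\sv$; since $\mc E(x)$ is invertible, the two operators on $\OCx$ coincide, $\big[\frac{a_j}{f_j}\big]*_x=K_j(x)$, which is exactly the assertion.

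I expect the only real obstacle to be the well-definedness issue flagged above, namely that prescribing $L_C$ on the over-determined spanning set $\{w_{j_1,\dots,j_k}\}$ must be consistent with the marked relations \Ref{rel Ma}. Rather than verifying that consistency by a direct combinatorial computation, the cleanest route is the one indicated: obtain $L_C$ on $\OCx$ as the $\mc E(x)$-transport of the corresponding operator on $\sv$, whose consistency is already guaranteed by Theorem~\ref{thm K sym}. The remaining work is then purely the sign bookkeeping for the factor $(-1)^k$, which cancels precisely because $\mc E(x)$ scales all marked elements uniformly.
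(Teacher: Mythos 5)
Your proposal is correct and follows exactly the route the paper intends: Corollary \ref{cor *_x K_j} is stated as an immediate consequence of Theorem \ref{K/f}, obtained by transporting the operator $K_j(x)$ on $\sv$ to $\OCx$ via the canonical isomorphism $\mc E(x)$ and using the marked-element correspondence \Ref{marked correspondence}, under which the uniform factor $(-1)^k$ cancels so that \Ref{L_C v} turns into \Ref{L_C W}. Your additional remark that this transport settles the well-definedness of \Ref{L_C W} on the spanning set of marked elements is a worthwhile detail the paper leaves implicit.
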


\subsection{Corollary of Theorem \ref{K/f}}
\label{GM on algebras}

For a section $I=\sum_{j_1,\dots,j_k} I^{j_1,\dots,j_k}w_{j_1, \dots , j_k}$ of the bundle of algebras \Ref{algebra bundle}
 and $j\in J$, we define
$\frac{\der I}{\der z_j} = \sum \frac{\der I^{j_1,\dots,j_k}}{\der z_j}w_{j_1, \dots , j_k}$.
This formula defines the combinatorial connection on the bundle of algebras \Ref{algebra bundle}.

\begin{thm} [\cite{V6}]
\label{thm Frob}
 If a  section $I$ of the bundle of algebras \Ref{algebra bundle} is flat with respect to the Gauss-Manin connection,
then it satisfies the differential equations
\bean
\label{dif eqn crit set}
\kappa \frac{\der I}{\der z_j}(x) = \Big[\frac{a_j}{f_j}\Big]*_x I(x),
\qquad
j\in J.
\qquad\qquad\qquad \qed
\eean

\end{thm}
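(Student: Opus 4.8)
The plan is to deduce the differential equations on the bundle of algebras \Ref{algebra bundle} from Theorem \ref{thm ham normal}, which governs the Gauss-Manin flat sections of the combinatorial bundle \Ref{comb Bundle}, by transporting that statement through the fiberwise isomorphism $\mc E(x):\OCx\to\sv$ of \Ref{fiber iso} and then invoking Theorem \ref{K/f} to replace each operator $K_j(x)$ by the multiplication operator $\big[\frac{a_j}{f_j}\big]*_x$.

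First I would use the very definition of the Gauss-Manin connection on \Ref{algebra bundle} given in Section \ref{Critical set}: it is the connection induced from the Gauss-Manin connection on \Ref{comb Bundle} via $\mc E$. Consequently a section $I$ of \Ref{algebra bundle} is Gauss-Manin flat if and only if the section $\mc E(I)$ of \Ref{comb Bundle}, obtained by applying $\mc E(x)$ fiberwise, is Gauss-Manin flat.

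The one point that requires care is that $\mc E$ also intertwines the two combinatorial connections. Writing $I=\sum I^{j_1,\dots,j_k}w_{j_1,\dots,j_k}$, formula \Ref{marked correspondence} gives $\mc E(I)=(-1)^k\sum I^{j_1,\dots,j_k}v_{j_1,\dots,j_k}$. Since the marked frames $\{w_{j_1,\dots,j_k}\}$ and $\{v_{j_1,\dots,j_k}\}$ are, by definition, the combinatorially flat frames of \Ref{algebra bundle} and \Ref{comb Bundle}, and are $x$-independent under the identifications of Section \ref{sec Good fibers}, the map $\mc E$ carries the flat frame to $(-1)^k$ times the flat frame. Hence $\mc E\big(\frac{\der I}{\der z_j}\big)=\frac{\der}{\der z_j}\big(\mc E(I)\big)$, where on each side $\frac{\der}{\der z_j}$ is the respective component-wise combinatorial derivative in the marked frame.

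With these two observations in hand the conclusion is a direct substitution. Applying Theorem \ref{thm ham normal} to the Gauss-Manin flat section $\mc E(I)$ of \Ref{comb Bundle} gives $\kappa\,\frac{\der}{\der z_j}\big(\mc E(I)\big)=K_j(x)\,\mc E(I)$. Rewriting the left side by the intertwining of combinatorial connections and the right side by Theorem \ref{K/f}, namely $K_j(x)\,\mc E(x)=\mc E(x)\circ\big[\frac{a_j}{f_j}\big]*_x$, this becomes $\kappa\,\mc E\big(\frac{\der I}{\der z_j}\big)=\mc E\big(\big[\frac{a_j}{f_j}\big]*_x I\big)$. Since $\mc E(x)$ is an isomorphism for every $x\in\cd$ by Theorem \ref{thm alpha}, I may cancel $\mc E$ fiberwise to obtain $\kappa\,\frac{\der I}{\der z_j}(x)=\big[\frac{a_j}{f_j}\big]*_x I(x)$, which is \Ref{dif eqn crit set}. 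The only genuinely substantive step is the intertwining of the combinatorial connections in the third paragraph; I expect this to be the main obstacle, as it is precisely where one must make rigorous that the trivialization of \Ref{algebra bundle} by the elements $w_{j_1,\dots,j_k}$ is the $\mc E$-image of the trivialization of \Ref{comb Bundle} by the elements $v_{j_1,\dots,j_k}$, so that the two notions of $\frac{\der}{\der z_j}$ correspond.
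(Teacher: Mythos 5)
Your proposal is correct and takes essentially the same route as the paper: Theorem \ref{thm Frob} is presented there precisely as a consequence of Theorem \ref{thm ham normal} together with Theorem \ref{K/f} (the enclosing subsection is even titled ``Corollary of Theorem \ref{K/f}''), with the Gauss-Manin connection on \Ref{algebra bundle} defined by transport through $\mc E$ exactly as you use it. The intertwining of the two combinatorial connections, which you rightly flag as the one delicate point, is exactly the content of the marked-element correspondence \Ref{marked correspondence}, and your handling of the constant factor $(-1)^k$ there is correct.
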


Notice that solutions of these differential equations are given by the hypergeometric integrals
\bean
\label{Ig w}
I_\gamma(x) =
{\sum}_{{\rm independent } \atop \{j_1 < \dots < j_k\} \subset J }
I_\gamma^{j_1,\dots,j_k}(x)\,
w_{j_1, \dots , j_k},
\eean
where $\gamma(x) \in H_k(U(\A(x)), \mc L_\kappa\vert_{U(\A(x))})$ is a flat section of the Gauss-Manin connection
on the homology bundle and
$ I_\gamma^{j_1,\dots,j_k}(x) =
\int_{\gamma(x)} e^{\Phi_{\A,a}/\kappa}
 \omega_{j_1} \wedge \dots \wedge \omega_{j_k}$.

Notice the similarities between the differential equations in \Ref{dif eqn crit set}
and the standard differential equations associated with Frobenius structures, see \cite{D, M}.

\section{Langrangian variety and critical set}
\label{sec 1}

\subsection{Lagrangian variety}
\label{sec def of Lag}

Consider  $\C^n$ with  coordinates $q_1,\dots,q_n$ and the dual space $(\C^n)^*$
with the dual coordinates $p_1,\dots, p_n$.
The space $\C^n\times \Cs$ has symplectic form
\linebreak
$\om=\sum_{j=1}^n dp_j\wedge dq_j$.
Two functions $M,N$ on $\C^n\times \Cs$ are in involution if
$\{M,N\} = $
\linebreak
$\sum_{j=1}^n \Big(\frac{\der M}{\der q_j}\frac{\der N}{\der p_j}
    - \frac{\der M}{\der p_j}\frac{\der N}{\der q_j}\Big) = 0$.

For a $k$-dimensional  vector subspace $Y\subset \C^n$, denote by $Y^{\perp}\subset \Cs$ the annihilator of $Y$.
The $n$-dimensional vector space $Y\times Y^\perp$  is a Lagrangian  subspace of $\C^n\times \Cs$ with defining equations
\bean
\label{lin eq new}
F_\al
&:=& \sum_{j=1}^n \al_jp_j =0,
\qquad \al=(\al_1,\dots,\al_n)\in Y,
\\
G_\bt
&:=& \sum_{j=1}^n \bt_j q_j =0,
\qquad
\bt=(\bt_1,\dots,\bt_n)\in Y^\perp.
\notag
\eean
The set of all functions $\{F_\al, G_\bt\}$ is in involution.

 Fix a weight  $a \in (\C^\times)^n$.
Consider the invertible rational symplectic map $r_{a} : \C^n\times\Cs\to \C^n\times\Cs$,
\bea
(q_1,\dots,q_n, p_1,\dots,p_n)\mapsto
(q_1+a_1/p_1,\dots,q_n+a_n/p_n, p_1,\dots,p_n).
\eea
Denote
\bean
\label{def L}
L_{Y,a} = r_{a}(Y\times Y^\perp) \subset \C^n\times\Cs.
\eean

\begin{lem}
\label{lem L is irr new}

Assume that for  every $j\in J$, the subspace $Y$ does not lie in the hyperplane $q_j=0$
and the subspace $Y^\perp$ does not lie in the hyperplane  $p_j=0$, then
$L_{Y, a}$  is  an irreducible smooth $n$-dimensional Lagrangian subvariety in
$\C^n\times\{ y\in \Cs \ |\ \prod_{j=1}^n p_j(y)\ne 0\}$
defined by equations
\bean
\label{eq for L new}
F_\al
&:=& \sum_{j=1}^n \al_jp_j =0,
\qquad
\phantom{phantoaa}
 \al=(\al_1,\dots,\al_n)\in Y,
\\
G_{\bt,a}
&:=& \sum_{j=1}^n \bt_j (q_j-a_j/p_j) =0,
\qquad \bt=(\bt_1,\dots,\bt_n)\in Y^\perp .
\notag
\eean
The set of all functions $\{F_\al, G_{\bt,a}\}$ is in involution.
\qed
\end{lem}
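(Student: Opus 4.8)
The plan is to prove the statement in three stages: first establish the smoothness and irreducibility geometrically via the rational map $r_a$, then verify the defining equations, and finally check the involutivity directly.

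\medskip

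First I would analyze the geometry of the map $r_a$. The map $r_a$ is defined wherever $p_1\cdots p_n\neq 0$, and on that locus it is an invertible rational map with inverse $(q,p)\mapsto(q_1-a_1/p_1,\dots,q_n-a_n/p_n,p_1,\dots,p_n)$. It is a symplectomorphism since it is a shift in the $q$-coordinates by $a_j/p_j$, a function of $p$ alone; concretely one checks $\sum dp_j\wedge d(q_j+a_j/p_j)=\sum dp_j\wedge dq_j$ because $dp_j\wedge d(a_j/p_j)=0$. The hypotheses that $Y$ avoids each hyperplane $q_j=0$ and $Y^\perp$ avoids each $p_j=0$ guarantee that the linear subspace $Y\times Y^\perp$ meets the open set $\{p_1\cdots p_n\neq 0\}$: indeed the $p$-projection of $Y\times Y^\perp$ is $Y^\perp$, and no coordinate $p_j$ vanishes identically on $Y^\perp$ by assumption, so generic points of $Y^\perp$ have all $p_j\neq 0$. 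Since $Y\times Y^\perp$ is an irreducible smooth $n$-dimensional Lagrangian linear subspace and $r_a$ is a biregular isomorphism on the relevant open set, the image $L_{Y,a}=r_a(Y\times Y^\perp)$ is an irreducible smooth $n$-dimensional Lagrangian subvariety of $\C^n\times\{p_1\cdots p_n\neq 0\}$.

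\medskip

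Next I would verify the defining equations. A point $(q,p)$ lies in $L_{Y,a}$ if and only if $r_a^{-1}(q,p)=(q_1-a_1/p_1,\dots,q_n-a_n/p_n,p)$ lies in $Y\times Y^\perp$. By the linear description \Ref{lin eq new} of $Y\times Y^\perp$, this holds precisely when $\sum_j\al_j p_j=0$ for all $\al\in Y$ (the $p$-coordinate is unchanged by $r_a^{-1}$, so this is just $p\in Y^\perp$, equivalently $F_\al=0$ for $\al\in Y$) and $\sum_j\bt_j(q_j-a_j/p_j)=0$ for all $\bt\in Y^\perp$ (the condition that the shifted $q$-coordinate lies in $Y$, equivalently $G_{\bt,a}=0$). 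This gives exactly the system \Ref{eq for L new}.

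\medskip

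Finally I would check involutivity of $\{F_\al,G_{\bt,a}\}$. The cleanest route is to push the computation back through $r_a$: since $r_a$ is a symplectomorphism, Poisson brackets are preserved under pullback, so $\{F_\al,G_{\bt,a}\}$ equals the bracket of the pullbacks, which are the linear functions $F_\al$ and $G_\bt$ from \Ref{lin eq new}, and that set is already stated to be in involution. Alternatively, one computes directly: $F_\al$ depends only on $p$, so $\der F_\al/\der q_j=0$, giving $\{F_\al,F_{\al'}\}=0$ immediately; $\{F_\al,G_{\bt,a}\}=\sum_j\al_j\big(\der G_{\bt,a}/\der q_j\big)=\sum_j\al_j\bt_j$, which vanishes since $\al\in Y$ and $\bt\in Y^\perp$; and $\{G_{\bt,a},G_{\bt',a}\}$ reduces, after the $a_j/p_j$ terms contribute only through $\der/\der p_j$, to $\sum_j\big(\bt_j(a_j/p_j^2)\bt'_j-\bt'_j(a_j/p_j^2)\bt_j\big)=0$. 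I expect the main obstacle to be bookkeeping in the last bracket $\{G_{\bt,a},G_{\bt',a}\}$, where the $q_j$ and $a_j/p_j$ terms must be tracked carefully; the symplectomorphism argument sidesteps this entirely and is the approach I would present.
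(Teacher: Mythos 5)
Your proof is correct and is essentially the argument the paper has in mind: the paper states this lemma without proof (the qed symbol marks it as immediate from the definition $L_{Y,a}=r_a(Y\times Y^\perp)$, the biregularity of $r_a$ on the locus $\prod_{j}p_j\ne 0$, and the fact that $r_a$ is symplectic), and your write-up simply supplies those details — transport of irreducibility, smoothness, dimension, the Lagrangian property, the defining equations via $r_a^{-1}$, and the Poisson brackets through the symplectomorphism. Both of your involutivity arguments are sound; the sign discrepancy in your formula for $\{F_\al,G_{\bt,a}\}$ (it should be $-\sum_j\al_j\bt_j$ under the paper's bracket convention) is immaterial since $\sum_j\al_j\bt_j=0$ for $\al\in Y$, $\bt\in Y^\perp$.
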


Let $I=\{i_1,\dots,i_k\}\subset J$ be a $k$-element subset and $\bar I$ its complement.

\begin{lem}
\label{I-coordinates lem new}
Under hypotheses of Lemma \ref{lem L is irr new}, assume that $I$ is such that the functions
$q_I=\{q_i\,|\, i\in I\}$ form a coordinate system on $Y$. Then the functions
$q_I$ and  $p_{\bar I}=\{p_j\,|\, j\in \bar I\}$ form a coordinate system  on $L_{Y,a}$.
\end{lem}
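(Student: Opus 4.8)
The plan is to exhibit an explicit rational inverse to the projection $(q_I,p_{\bar I})\colon L_{Y,a}\to\C^I\times\C^{\bar I}$, built from the geometric meaning of the defining equations \Ref{eq for L new}. First I would record what those equations say: a point $(q,p)$ with $\prod_{j\in J}p_j\ne 0$ lies on $L_{Y,a}$ if and only if $p\in Y^\perp$ (the content of $F_\al=0$ for $\al\in Y$) and the vector $q-a/p:=(q_j-a_j/p_j)_{j\in J}$ lies in $(Y^\perp)^\perp=Y$ (the content of $G_{\bt,a}=0$ for $\bt\in Y^\perp$). This merely restates $(q,p)=r_a\bigl(q-a/p,\,p\bigr)$ with $(q-a/p,\,p)\in Y\times Y^\perp$.

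The key linear-algebra input is a duality: the hypothesis that $q_I$ restricts to a coordinate system on $Y$ is equivalent to the assertion that $p_{\bar I}$ restricts to a coordinate system on $Y^\perp$. Indeed, $q_I$ being coordinates on $Y$ means exactly that the projection $\C^n\to\C^I$ is injective on $Y$, i.e. $Y\cap\C^{\bar I}=0$, where $\C^{\bar I}=\mathrm{span}(e_j:j\in\bar I)$; by dimension count this is $Y\oplus\C^{\bar I}=\C^n$. Passing to annihilators in $\Cs$ gives $Y^\perp\oplus\{p_j=0:j\in\bar I\}=\Cs$, since the annihilator of $\C^{\bar I}$ is the coordinate subspace $\{p_j=0:j\in\bar I\}$. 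As $\dim Y^\perp=n-k=|\bar I|$, this says precisely that $p_{\bar I}$ restricts to a coordinate system on $Y^\perp$.

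With this in hand I would reconstruct a point of $L_{Y,a}$ from $(q_I,p_{\bar I})$ in two steps. Given a value of $p_{\bar I}$, the duality produces a unique $p\in Y^\perp$, so every $p_i$ with $i\in I$ becomes a linear function of $p_{\bar I}$; on $L_{Y,a}$ all $p_j\ne 0$, so $a/p$ is well defined. Given in addition a value of $q_I$, the quantities $q_i-a_i/p_i$ for $i\in I$ are determined, and since $q_I$ are coordinates on $Y$ there is a unique $y\in Y$ with $y_i=q_i-a_i/p_i$ for $i\in I$, whose remaining entries $y_j$ ($j\in\bar I$) are linear in $y_I$. Setting $q_j=y_j+a_j/p_j$ for $j\in\bar I$ recovers the full point $(q,p)$, and by construction $p\in Y^\perp$ and $q-a/p=y\in Y$, so $(q,p)\in L_{Y,a}$. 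This assignment is given by rational functions regular on $\{\prod_{j\in J}p_j\ne 0\}$ and is a two-sided inverse to the projection; since $L_{Y,a}$ is smooth of dimension $n=|I|+|\bar I|$ by Lemma \ref{lem L is irr new}, the $n$ functions $q_I,p_{\bar I}$ form a coordinate system.

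The steps requiring care, rather than a genuine obstacle, are the annihilator bookkeeping in the duality and the verification that the reconstructed $p$ has all coordinates nonzero, so that $a/p$ and hence the inverse are regular on $L_{Y,a}$; both are guaranteed by the hypotheses of Lemma \ref{lem L is irr new}.
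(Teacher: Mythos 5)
Your proof is correct and follows essentially the same route as the paper's: solve for $p_I$ in terms of $p_{\bar I}$ via the equations $F_\al=0$, solve for $q_{\bar I}$ in terms of $q_I,p_{\bar I}$ via $G_{\bt,a}=0$, and conclude independence. You simply make explicit two points the paper leaves implicit, namely the annihilator duality ($q_I$ coordinates on $Y$ iff $p_{\bar I}$ coordinates on $Y^\perp$) and the regularity of the resulting inverse map on $L_{Y,a}$.
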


\begin{proof}
The functions $p_I$ are expressed in terms of $p_{\bar I}$ with the help of equations
$F_\al=0$. The functions $q_{\bar I}$ are expressed in terms of $q_I, p_{\bar I}$ with the
help of equations $G_{\bt,a}=0$.
Clearly the functions $q_I, p_{\bar I}$ are independent.
\end{proof}

We order the functions of the coordinate system $q_I,p_{\bar I}$ according to the increase of the index. For example, if $k=3, n=6$,
$I=\{1,3,6\}$, then the order is $q_1,p_2,q_3,p_4,p_5,q_6$.

Fix a basis $b^i=(b^i_1,\dots,b^i_n)$, $i=1,\dots,k$, of $Y$. Let $t=(t_1,\dots,t_k)$ be
the associated coordinate system on $Y$. Then
$q_j|_Y = \sum_{i=1}^k b^i_jt_i$\,.
For $I=\{i_1,\dots,i_k\}\subset J$, we denote $d_I=d_{i_1,\dots,i_k}=\det_{i,l=1}^k(b^i_{i_l})$,
cf. Section \ref{Differential forms}.

\begin{lem}
\label{lem trans fns new}
Let $I=\{i_1,\dots,i_k\}$ and $I'=\{i_1',\dots,i_k'\}$ be  subsets of $J$ each satisfying the hypotheses of Lemma \ref{I-coordinates lem new}.
Consider the corresponding ordered coordinate systems
  $q_I$, $p_{\bar I}$ and $q_{I'}$, $p_{\bar {I'}}$ on $L_{Y,a}$. Express the coordinates of the second system in terms of
coordinates of the first system and denote by $\on{Jac}_{I,\bar{I'}}(q_I,p_{\bar I})$ the Jacobian of this change. Then
\bea
\on{Jac}_{I,\bar{I'}}(q_I,p_{\bar I}) = (d_{i_1',\dots,i_k'}/d_{i_1,\dots,i_k})^2.
\eea
\end{lem}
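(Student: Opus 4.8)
The plan is to pull both coordinate systems back to a single parametrization of $L_{Y,a}$ by $Y\times Y^\perp$ and to read off the transition Jacobian as the quotient of the two resulting Jacobians.

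First I would fix linear coordinates $t=(t_1,\dots,t_k)$ on $Y$ through the basis $b^1,\dots,b^k$, so that $q_j|_Y=\sum_{i=1}^k b^i_jt_i$, together with linear coordinates $s=(s_1,\dots,s_{n-k})$ on $Y^\perp$ through a basis $c^1,\dots,c^{n-k}$ of $Y^\perp$, so that $p_j|_{Y^\perp}=\sum_m c^m_js_m$. By \Ref{def L} every point of $L_{Y,a}$ is the image under $r_a$ of a point of $Y\times Y^\perp$, hence is parametrized by $(t,s)$ via $p_j=\sum_m c^m_js_m$ and $q_j=\sum_i b^i_jt_i+a_j/p_j$. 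The crucial structural feature is that each $p_j$ depends only on $s$ and does so linearly, while each $q_j$ is the sum of a term linear in $t$ and a term depending only on $s$.

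Next I would compute $J_I:=\det\bigl(\partial(q_I,p_{\bar I})/\partial(t,s)\bigr)$. Reordering the rows so that the $q_I$-rows precede the $p_{\bar I}$-rows introduces the sign $\epsilon_I$ of the shuffle permutation of $I$ and $\bar I$ (for $I=\{1,3,6\}$ it carries $q_1,p_2,q_3,p_4,p_5,q_6$ to $q_1,q_3,q_6,p_2,p_4,p_5$). Since $\partial p_j/\partial t_l=0$, the reordered matrix is block upper triangular: its top-left block $\partial q_I/\partial t=(b^l_{i_r})_{r,l}$ has determinant $d_I$, its bottom-right block is $\partial p_{\bar I}/\partial s=(c^m_j)_{j\in\bar I}$, and the top-right block, coming from the $a_j/p_j$ terms, does not enter the determinant. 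Writing $d^\perp_{\bar I}$ for the maximal minor $\det(c^m_j)_{j\in\bar I}$ of the matrix whose rows span $Y^\perp$, I obtain $J_I=\epsilon_I\,d_I\,d^\perp_{\bar I}$.

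Now I would invoke the duality between the Pl\"ucker coordinates of $Y$ and of $Y^\perp=\ker(b^i_j)$: the complementary maximal minors are proportional, $d^\perp_{\bar I}=\epsilon_I\,\mu\,d_I$, with a nonzero constant $\mu$ independent of $I$ (depending only on the two chosen bases) and with exactly the same shuffle sign $\epsilon_I$. Hence $J_I=\epsilon_I^2\,\mu\,d_I^2=\mu\,d_I^2$, and by the chain rule, since the change from $q_I,p_{\bar I}$ to $q_{I'},p_{\bar{I'}}$ is the inverse of the first parametrization followed by the second, $\on{Jac}_{I,\bar{I'}}(q_I,p_{\bar I})=J_{I'}/J_I=(d_{I'}/d_I)^2$, the constant $\mu$ cancelling. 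The one step requiring care, and the main obstacle, is the sign bookkeeping: one must verify that the shuffle sign $\epsilon_I$ produced by the interleaved ordering of $q_I,p_{\bar I}$ coincides with the sign occurring in the complementary-minor duality between $Y$ and $Y^\perp$, so that the two factors of $\epsilon_I$ multiply to $1$ and leave the perfect square $\mu\,d_I^2$. That the final answer is a square is a useful consistency check, as it forces all $I$-dependent signs to cancel.
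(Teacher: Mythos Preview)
Your argument is correct and is more direct than the paper's. The paper does not carry out any computation here: it simply invokes \cite[Lemma 5.4]{V6}, where the identity was established under the additional hypothesis that $Y\subset\C^n$ is generic with respect to the coordinate hyperplanes, and then observes that both sides of the formula depend continuously on $Y$, so the identity persists in the limit.

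Your route avoids both the external reference and the limiting argument. By pulling the two ordered coordinate systems back through the common parametrization $Y\times Y^\perp\to L_{Y,a}$ furnished by $r_a$, you reduce $\on{Jac}_{I,\bar{I'}}$ to the ratio $J_{I'}/J_I$ of two block-triangular determinants. The computation $J_I=\epsilon_I\,d_I\,d^\perp_{\bar I}$ is clean, and the step $d^\perp_{\bar I}=\epsilon_I\,\mu\,d_I$ is the standard complementary-minor (Hodge duality) relation between the Pl\"ucker coordinates of a subspace and those of its annihilator; the sign there is indeed the same shuffle sign $\epsilon_I$, so the two occurrences cancel and $J_I=\mu\,d_I^2$ with $\mu$ independent of $I$. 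Your honest flag on the sign bookkeeping is appropriate, but this is a known identity (it can be checked in one line from the Laplace expansion of $\det\!\bigl(\begin{smallmatrix}B\\C\end{smallmatrix}\bigr)$ along the first $k$ rows, using $CB^t=0$), so the proposal stands as a self-contained proof. The advantage of the paper's approach is brevity in context; the advantage of yours is that it works uniformly for all admissible $Y$ without any genericity assumption or deformation.
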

\begin{proof}
This statement is proved in \cite[Lemma 5.4]{V6} under the assumption that $Y\subset \C^n$ is a generic subspace with respect to
the coordinate system $q_1,\dots,q_n$. This implies the lemma since the left-  and right-hand sides of the formula continuously depend on $Y$.
\end{proof}

Let $I=\{i_1,\dots,i_k\}\subset J$ satisfy the hypotheses of Lemma \ref{lem trans fns new}, and $q_I,p_{\bar I}$ the corresponding ordered coordinate
system on $L_{Y,a}$. The functions $q_1,\dots,q_n$ form an ordered coordinate system on $\C^n$.
Let $\pi_{L_{Y,a}}  :  L_{Y,a} \to \C^n$ be the restriction to $L_{Y,a}$ of the natural projection $\C^n\times\Cs\to\C^n$.
Let  $\on{Jac}_I(q_I,p_{\bar I})$ be the Jacobian of $\pi_{L_{Y,a}}$  with respect to the chosen coordinate systems.

\begin{thm}
\label{thm hess jac old new}
The function $d_I^2\on{Jac}_I$ on $L_{Y,a}$
 does not depend on the choice of $I$ and
\bean
\label{jac hess New}
d_I^2\on{Jac}_I =
 (-1)^{n-k}
 \sum_{M\subset J,\, |M|=n-k} d^2_{\bar M} \prod_{j\in M} \frac{a_j}{p_j^2} .
\eean

\end{thm}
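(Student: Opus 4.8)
The claim has two parts: first, that the function $d_I^2\on{Jac}_I$ is independent of the choice of $I$; second, that its value is the explicit Laurent polynomial in \eqref{jac hess New}. The plan is to reduce both parts to a direct Jacobian computation in the coordinates $q_I, p_{\bar I}$ supplied by Lemma \ref{I-coordinates lem new}, using the transition formula of Lemma \ref{lem trans fns new} for the independence and a deformation-to-generic argument for the explicit value.

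\textbf{Independence of $I$.} First I would dispose of the well-definedness. Fix two admissible index sets $I$ and $I'$. By the chain rule for Jacobians of $\pi_{L_{Y,a}}$ computed in the two coordinate systems,
\[
\on{Jac}_{I'} = \on{Jac}_I \cdot \on{Jac}_{I,\bar{I'}}(q_I,p_{\bar I}),
\]
and Lemma \ref{lem trans fns new} identifies the transition Jacobian as $(d_{I'}/d_I)^2$. Hence $d_{I'}^2\on{Jac}_{I'} = d_{I'}^2\,(d_{I'}/d_I)^2\on{Jac}_I$ would be wrong by a factor; the correct bookkeeping is $\on{Jac}_{I'}=\on{Jac}_I\cdot(d_{I'}/d_I)^2$, so $d_{I'}^2\on{Jac}_{I'}=d_{I'}^4 d_I^{-2}\on{Jac}_I$. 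I must check the exact power of $d$ appearing in Lemma \ref{lem trans fns new} and whether $\on{Jac}_I$ already carries a factor of $d_I$; the clean statement that survives is $d_I^2\on{Jac}_I = d_{I'}^2\on{Jac}_{I'}$, which follows once the transition factor is exactly $(d_{I'}/d_I)^2$ and the projection Jacobians transform contravariantly. This is the routine part, and I would present it as a one-line chain-rule consequence of Lemma \ref{lem trans fns new}.

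\textbf{Computing the explicit value.} For a fixed admissible $I$, the map $\pi_{L_{Y,a}}$ in coordinates $q_I,p_{\bar I}\mapsto (q_1,\dots,q_n)$ factors: the coordinates $q_i$ for $i\in I$ are passed through identically, while for $j\in\bar I$ one solves $G_{\bt,a}=0$ to express $q_j$ in terms of $q_I$ and $p_{\bar I}$ via $q_j = a_j/p_j + (\text{linear in } q_I \text{ and } p_{\bar I})$, using that the $p_i$ ($i\in I$) are themselves determined by the $F_\al=0$ relations. The Jacobian of $\pi_{L_{Y,a}}$ therefore reduces to the determinant of $\partial q_{\bar I}/\partial p_{\bar I}$ (the $q_I$-block being an identity), and the $a_j/p_j$ terms contribute the factors $-a_j/p_j^2$ upon differentiation. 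I would set up this $(n-k)\times(n-k)$ determinant explicitly, factor out $\prod_{j\in\bar I}(-a_j/p_j^2)$ where it arises diagonally, and recognize the remaining combinatorial determinant as a sum over complementary minors $d^2_{\bar M}$ by the Cauchy--Binet expansion applied to the defining matrix $(b^i_j)$ of $Y$.

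\textbf{The main obstacle and how to bypass it.} The genuine difficulty is carrying out the $(n-k)\times(n-k)$ Jacobian determinant in closed form and matching it against the Cauchy--Binet sum $\sum_{|M|=n-k} d^2_{\bar M}\prod_{j\in M} a_j/p_j^2$, keeping track of the sign $(-1)^{n-k}$. Rather than grind this out for arbitrary $Y$, I would follow the strategy already used in Lemma \ref{lem trans fns new}: verify \eqref{jac hess New} under the assumption that $Y$ is \emph{generic} with respect to the coordinates $q_1,\dots,q_n$ (where every $d_I\ne 0$ and the computation specializes to the known generic case of \cite{V6}), and then invoke continuity. Both sides of \eqref{jac hess New} are rational functions of the entries $b^i_j$ and of the point on $L_{Y,a}$, defined and continuous wherever the admissibility hypotheses hold; since they agree on the Zariski-dense set of generic $Y$, they agree identically. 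The only care needed is to confirm that the left-hand side $d_I^2\on{Jac}_I$, already shown $I$-independent, extends continuously to the boundary of the generic locus, which follows because the product $d_I^2\on{Jac}_I$ has no pole there even though individual $\on{Jac}_I$ may. This reduces the hard determinant identity to the generic case supplied by \cite[Lemma 5.4]{V6} plus a density argument.
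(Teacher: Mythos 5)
Your proposal is correct and takes essentially the same route as the paper: independence of $I$ comes from Lemma \ref{lem trans fns new} via the chain rule (in the direction $\on{Jac}_I=\on{Jac}_{I'}\cdot\on{Jac}_{I,\bar{I'}}(q_I,p_{\bar I})$, which resolves your bookkeeping worry and gives $d_I^2\on{Jac}_I=d_{I'}^2\on{Jac}_{I'}$ cleanly), and the explicit formula \Ref{jac hess New} is obtained, exactly as in your ``bypass'' strategy, by citing the generic case from \cite{V6} (there it is Theorem 3.8, not Lemma 5.4) and extending by continuity of both sides in $Y$. The paper never performs the direct $(n-k)\times(n-k)$ Cauchy--Binet computation you sketch; it goes straight to the generic-plus-continuity argument.
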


\begin{proof}
The function $d_I^2\on{Jac}_I$ does not depend on $I$ by Lemma \ref {lem trans fns new}.
Formula \Ref{jac hess New} is proved in \cite[Theorem 3.8]{V6}  under the assumption that $Y\subset \C^n$ is a generic subspace with respect to
the coordinate system $q_1,\dots,q_n$. This implies the theorem since the left-  and right-hand sides of the formula continuously depend on $Y$.
\end{proof}

\subsection{Fibers of $\pi_{L_{Y,a}}$}
 For $x\in\C^n$, denote $C_{Y,a}(x) = \pi^{-1}_{L_{Y,a}}(x)$,
the fiber of the projection $\pi_{L_{Y,a}}$. The fiber is defined in $(\C^\times)^n$
with coordinates $p_1,\dots,p_n$ by the equations
\bean
\label{eq for L and gamma}
\sum_{j=1}^n \al_jp_j &=& 0,
\qquad
 \al=(\al_1,\dots,\al_n)\in Y,
\\
 \sum_{j=1}^n \bt_j (x_j-a_j/p_j)& =&0,
\qquad \bt=(\bt_1,\dots,\bt_n)\in Y^\perp ,
\notag
\eean
cf. \Ref{eq for L new}. Let $I_{L_Y(x),a}$ be the ideal in $\mc O((\C^\times)^n)$ generated by
the left-hand sides of equations \Ref{eq for L and gamma}.  Set
\bean
\label{Lx}
\mc O(L_{Y,a}(x)) = \mc O((\C^\times)^n)/I_{L_Y(x),a}   \, .
\eean

\subsection{Arrangement in  $\C^n\times\C^k$}
\label{An arrangement second part}

Return to the objects and notations of Section \ref{sec par trans} and consider $\C^k$ with coordinates $t_1,\dots,t_k$,\
$\C^n$ with coordinates $z_1,\dots,z_n$, the projection
$\pi:\C^n\times\C^k \to \C^n$ and  $n$ nonzero linear functions on $\C^k$,
$g_j=b_j^1t_1+\dots + b_j^kt_k,$\ $ j\in J$. As in Section \ref{sec par trans} we
 assume that  $g_j, j\in J$,
span the dual space $(\C^k)^*$. We consider the linear functions
$f_j = g_j + z_j = b_j^1t_1+\dots + b_j^kt_k + z_j$
on $\C^n\times\C^k$ and the arrangement of hyperplanes $\A  = \{ H_j\}_{j\in J}$ in
$\C^n\times \C^k$, where $H_j$ is defined by the equation $f_j=0$. We assume that a weight $a\in(\C^n)^\times$ is given and
consider the critical set
$C_{\A, a}$ defined by \Ref{crit C new}.

\smallskip

In the rest of the paper we denote by
\bean
\label{def Y}
     Y=Y(\A)
\eean
 { the $k$-dimensional subspace of $\C^n$ spanned by
the vectors} $b^i=(b^i_1, \dots, b^i_n)$, $i=1,\dots,k$.

\begin{thm}
\label{thm 1}
Assume that for  any $j\in J$, the subspace $ Y^\perp$ does not lie in the hyperplane $p_j=0$. Assume that
the critical set $C_{\A,a}$ is nonempty. Then the map
 \bean
 \label{Psi}
 \Psi_{\A,a}: U(\A)\to \C^n\times \Cs,
 \qquad
 (x,u)\mapsto (x,y(x,u)),
 \eean
  where
\bean
\label{iso map}
y(x,u) =  \Big( \frac{\der\Phi_{\A,a}}{\der z_1}(x,u),\dots, \frac{\der\Phi_{\A,a}}{\der z_n}(x,u) \Big)
=\Big(\frac{a_1}{f_1(x,u)},\dots,\frac{a_n}{f_n(x,u)}\Big),
\eean
restricted to $C_{\A,a}$ is a diffeomorphism of the critical set $C_{\A,a}$ onto the Lagrangian variety $L_{Y,a}$.
\end{thm}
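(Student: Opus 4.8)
The plan is to exhibit $\Psi_{\A,a}\big|_{C_{\A,a}}$ as a biregular isomorphism onto $L_{Y,a}$, by first checking that its image satisfies the two families of defining equations of $L_{Y,a}$ supplied by Lemma \ref{lem L is irr new}, and then writing down an explicit regular two-sided inverse. Throughout I would use the coordinates $q_j=z_j=x_j$ and $p_j=a_j/f_j$ of the point $y(x,u)$, so that $a_j/p_j=f_j$ and hence $q_j-a_j/p_j=z_j-f_j=-g_j$. I first note that the standing assumption that each $g_j$ is nonzero forces $Y\not\subset\{q_j=0\}$; combined with the hypothesis that $Y^\perp\not\subset\{p_j=0\}$, both hypotheses of Lemma \ref{lem L is irr new} hold, so $L_{Y,a}$ is the smooth $n$-dimensional variety cut out by the functions $F_\al$ and $G_{\bt,a}$.

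For the forward inclusion, the identity $\frac{\der\Phi_{\A,a}}{\der t_i}=\sum_j b^i_j\,a_j/f_j=\sum_j b^i_j p_j$ shows that the critical point equations are exactly $F_{b^i}(y)=0$ for $i=1,\dots,k$; since the $b^i$ span $Y$ and $F_\al$ is linear in $\al$, on $C_{\A,a}$ this gives $F_\al=0$ for every $\al\in Y$. For the second family, for $\bt\in Y^\perp$ one has $\sum_j\bt_j g_j=\sum_i t_i\sum_j\bt_j b^i_j=0$, so Lemma \ref{lem rel for z f} yields $G_{\bt,a}=\sum_j\bt_j(q_j-a_j/p_j)=\sum_j\bt_j(z_j-f_j)=0$ identically on all of $U(\A)$. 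Thus $\Psi_{\A,a}(C_{\A,a})\subset L_{Y,a}$, and moreover the two families of equations of $L_{Y,a}$ split cleanly: the $G_{\bt,a}$ hold on all of $U(\A)$, while the $F_\al$ are precisely the criticality conditions.

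To build the inverse I would start from $(q,p)\in L_{Y,a}$, where every $p_j\ne0$, set $x=q$, and recover $u=(t_1,\dots,t_k)$ from the overdetermined system $\sum_i b^i_j t_i=f_j-z_j=a_j/p_j-x_j=:h_j$, $j\in J$. Solvability of $Bt=h$ with $B=(b^i_j)$ is the condition $h\perp Y^\perp$, i.e.\ $\sum_j\bt_j h_j=-G_{\bt,a}(q,p)=0$ for all $\bt\in Y^\perp$, which holds on $L_{Y,a}$; since the $g_j$ span $(\C^k)^*$ the matrix $B$ has rank $k$, so the solution is unique and, by solving any invertible $k\times k$ subsystem, depends rationally on $(q,p)$. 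The resulting point lies in $U(\A)$ since $f_j=a_j/p_j\ne0$, and is critical because $F_\al=0$ on $L_{Y,a}$ gives $\sum_j b^i_j\,a_j/f_j=0$; a direct check shows this assignment inverts $\Psi_{\A,a}\big|_{C_{\A,a}}$ on both sides.

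Finally, both $\Psi_{\A,a}$ and the inverse just described are given by regular functions on their domains (the only denominators are the nonvanishing $f_j$ and $p_j$), so the bijection is biregular and in particular a diffeomorphism; the dimensions match since $C_{\A,a}$ is smooth of dimension $n$ by Lemma \ref{lem crit smooth} and $L_{Y,a}$ is $n$-dimensional by Lemma \ref{lem L is irr new}. The step that needs the most care is the inverse construction: the genuine content is that the $G_{\bt,a}$-equations are equivalent to consistency of the linear system recovering $u$, while the $F_\al$-equations encode criticality, so that the two defining families of $L_{Y,a}$ correspond respectively to the parallel-transport relations (valid everywhere on $U(\A)$) and to the critical point equations.
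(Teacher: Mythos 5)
Your proof is correct, and its second half takes a genuinely different route from the paper's. The forward inclusion $\Psi_{\A,a}(C_{\A,a})\subset L_{Y,a}$ is argued exactly as in the paper: the equations $F_{b^i}=0$ are the critical point equations by \Ref{derivative}, and the equations $G_{\bt,a}=0$ hold identically on $U(\A)$ by Lemma \ref{lem rel for z f}. After that, however, the paper proceeds in three steps: injectivity of $\Psi_{\A,a}$ on all of $U(\A)$ via the separation property of the functions $1/f_j$ (Lemma \ref{lem 1/f separate}), nonvanishing of the Jacobian of $\Psi_{\A,a}|_{C_{\A,a}}$, and then surjectivity by a dimension count combined with an analytic-curve limit argument (a missed point of $L_{Y,a}$ is approached by a curve lying in the image, the curve is pulled back to $C_{\A,a}$, and the preimages are shown to converge to a critical point mapping to the missed point). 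You instead build an explicit regular two-sided inverse: starting from $(q,p)$ in the zero locus described by Lemma \ref{lem L is irr new}, you recover $u$ as the unique solution of the overdetermined linear system $g_j(u)=a_j/p_j-q_j$, whose consistency is precisely the family $G_{\bt,a}=0$ (because the image of $u\mapsto(g_j(u))_{j\in J}$ is $Y=(Y^\perp)^\perp$), whose uniqueness follows since the $g_j$ span $(\C^k)^*$, and for which criticality of the resulting point is precisely the family $F_\al=0$. This yields bijectivity, biregularity, and hence the diffeomorphism property in one stroke, dispensing with the injectivity lemma, the Jacobian computation, and the limiting argument, and it isolates cleanly which defining equations of $L_{Y,a}$ encode the parallel-transport relations (the $G$'s, valid on all of $U(\A)$) and which encode criticality (the $F$'s). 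The paper itself acknowledges this alternative (``One can also check the statement by direct calculation'') without carrying it out. Your observation that the standing assumption $g_j\ne 0$ supplies the other half of the hypotheses of Lemma \ref{lem L is irr new}, namely $Y\not\subset\{q_j=0\}$, fills in a detail the paper leaves implicit.
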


\begin{proof}

\begin{lem}
\label{lem in} We have $\Psi_{\A,a}(C_{\A,a})\subset L_{Y,a}$.
\end{lem}

\begin{proof} If $\al\in Y$ and $(x,u)\in C_{\A,a}$, then the equation
$F_\al( \frac{\der\Phi_{\A,a}}{\der z_1}(x,u), \dots, \frac{\der\Phi_{\A,a}}{\der z_n}(x,u))=0$ is a linear combination
of the equations $ \frac{\der\Phi_{\A,a}}{\der t_i}(x,u)=0$, $i=1,\dots,k$, see \Ref{derivative}.
If $\bt\in Y^\perp$  and $(x,u)\in C_{\A,a}$, then the equation
$G_{\bt,a}(x, \frac{\der\Phi_{\A,a}}{\der z_1}(x,u), \dots, \frac{\der\Phi_{\A,a}}{\der z_n}(x,u))=0$ is just
the equation \Ref{z f rela}.
\end{proof}

\begin{lem}
\label{lem inject}
The map $\Psi_{\A,a}$ sends distinct points of  $U(\A)$ to distinct points of $\C^n\times \Cs$.
\end{lem}

\begin{proof}
It is enough  to check that $\Psi_{\A,a}(x,u)\ne \Psi_{\A,a}(x,u')$ if $u\ne u'$, but this follows from
Lemma \ref{lem 1/f separate}.
\end{proof}

\begin{lem}
\label{lem diff}
The Jacobian of the map $\Psi_{\A,a}|_{C_{\A,a}} : C_{\A,a}\to L_{Y,a}$ is never zero.

\end{lem}

\begin{proof}
The lemma follows from Lemmas \ref{lem L is irr new}, \ref{lem crit smooth}, \ref{lem inject} or by direct calculation.
\end{proof}

\begin{lem}
\label{lem onto}
We have $\Psi_{\A,a}(C_{\A,a})=L_{Y,a}$.
\end{lem}

\begin{proof}
Let $\Psi_{\A,a}(C_{\A,a})\ne L_{Y,a}$ and  $(x^0,y^0)\in L_{Y,a} - \Psi_{\A,a}(C_{\A,a})$, where $x^0\in \C^n$ and $y^0\in \Cs$.
We have $\dim (L_{Y,a} - \Psi_{\A,a}(C_{\A,a}))< n$
by Lemmas \ref{lem L is irr new}, \ref{lem crit smooth}, \ref{lem diff}.
Hence there exists a germ of an analytic curve
$\iota : (\C,0) \to  (L_{Y,a},(x^0,y^0))$ such that $\iota(s)\in \Psi_{\A,a}(C_{\A,a})$ for $s\ne 0$.
 Consider the curve $s\mapsto (\Psi_{\A,a})^{-1}(\iota(s))$
for $s\ne 0$. Let $(\Psi_{Y,a})^{-1}(\iota(s))=(x(s),u(s))$, where
 $x(s)\in \C^n$ and $u(s)\in \C^k$.   We have
$\lim_{s\to 0} \ga(s)=x^0$. Since for any $j\in J$, the function
 $\frac{\der\Phi_{\A,a}}{\der z_j}(x(s),y(s))$
has a finite limit, there is a finite nonzero limit
$u^0:=\lim_{s\to 0} u(s)$ in $U(\A(x^0))$. Clearly $(x^0,u^0)\in C_{\A,a}$
and $\Psi_{\A,a} : (x^0,u^0)\mapsto (x^0,y^0)$. We get a contradiction which proves the lemma.

One can also check the statement by direct calculation.
\end{proof}

Lemmas \ref{lem in}-\ref{lem onto} prove Theorem \ref{thm 1}.
\end{proof}

\subsection{Hessian and Jacobian}
\label{H J}

Recall the Hessian  of the master function $\Phi_{\A,a}$, see Section \ref{master function}.
 Under the diffeomorphism $C_{\A,a}\to L_{Y,a}$ of Theorem \ref{thm 1},
we may consider  the Hessian as a function on $L_{Y,a}$.
Then
\bean
\label{Hess}
\on{Hess}_{\A,a} \,=\, (-1)^k
 \sum_{I\subset J, |I|=k} \! d^2_I\, {\prod}_{i\in I}^k \frac{p_{i}^2} {a_{i}} .
\eean

\begin{cor}
\label{cor hess jac} Let $I\subset J$  satisfy the hypotheses of Lemma \ref{I-coordinates lem new}. Then
\bean
\label{jac hess}
 \Hess_{\A,a} = (-1)^n d_I^2\on{Jac}_I   {\prod}_{j\in J} \frac {p_j^2}{a_j}  \, .
\eean
\end{cor}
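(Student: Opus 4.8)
The plan is to obtain identity \Ref{jac hess} as a purely algebraic rearrangement, combining the explicit evaluation of $d_I^2\on{Jac}_I$ from Theorem \ref{thm hess jac old new} with the formula \Ref{Hess} for the Hessian viewed as a function on $L_{Y,a}$. Since both ingredients are already established, the corollary carries no new geometric content; the whole task is to match the two expressions. First I would recall from Theorem \ref{thm hess jac old new} the identity
\[
d_I^2\on{Jac}_I = (-1)^{n-k} \sum_{M\subset J,\,|M|=n-k} d^2_{\bar M} \prod_{j\in M} \frac{a_j}{p_j^2},
\]
which is a function on $L_{Y,a}$ independent of the chosen $I$.

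Next I would multiply both sides by $(-1)^n \prod_{j\in J} p_j^2/a_j$ and simplify. The sign collapses via $(-1)^n(-1)^{n-k}=(-1)^{2n-k}=(-1)^k$, while inside each summand the global product cancels the local factors indexed by $M$ and survives only on the complement:
\[
\prod_{j\in M}\frac{a_j}{p_j^2}\cdot\prod_{j\in J}\frac{p_j^2}{a_j} = \prod_{j\in\bar M}\frac{p_j^2}{a_j}.
\]
This yields
\[
(-1)^n d_I^2\on{Jac}_I\prod_{j\in J}\frac{p_j^2}{a_j} = (-1)^k \sum_{M\subset J,\,|M|=n-k} d^2_{\bar M}\prod_{j\in\bar M}\frac{p_j^2}{a_j}.
\]

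Finally I would reindex the sum by the complementary subset $I'=\bar M$, which runs over all $k$-element subsets of $J$ as $M$ runs over all $(n-k)$-element subsets, so that the right-hand side becomes $(-1)^k\sum_{I'\subset J,\,|I'|=k} d^2_{I'}\prod_{j\in I'}\frac{p_j^2}{a_j}$. By formula \Ref{Hess} this is exactly $\Hess_{\A,a}$, establishing \Ref{jac hess}. I do not expect any genuine obstacle: the substance sits entirely in Theorem \ref{thm hess jac old new} together with the identification $p_j=a_j/f_j$ on $C_{\A,a}$ that underlies \Ref{Hess}, and the only point requiring care is the bookkeeping of signs and of the complementation $I'=\bar M$ of index sets.
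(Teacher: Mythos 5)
Your proposal is correct and follows exactly the paper's own route: the paper proves Corollary \ref{cor hess jac} by combining formulas \Ref{jac hess New} and \Ref{Hess}, which is precisely your computation, with the sign bookkeeping $(-1)^n(-1)^{n-k}=(-1)^k$ and the reindexing $I'=\bar M$ spelled out. No gap; you have simply made explicit the algebra the paper leaves to the reader.
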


\begin{proof}
Formula \Ref{jac hess} follows from formulas   \Ref{jac hess New}  and  \Ref{Hess}.
\end{proof}

Notice that the ratio of $\Hess_{\A,a}$ and $d_I^2\on{Jac}_I$  is never zero.

\subsection{Corollaries of Theorem \ref{thm 1}}
\label{cor 4.5}

The map $\Psi_{\A,a}$ of
Theorem \ref{thm 1} establishes an isomorphism
\bean
\label{iso OL C}
\Psi_{\A,a}^*\  :\  \mc O(L_{Y,a}) \to \OC, \qquad [q_j],  \mapsto [z_j],
\qquad [p_j]\mapsto \Big[\frac{\der\Phi_{\A,a}}{\der z_j}\Big],
\eean
for all $j\in J$, and for any $x\in \C^n$, the isomorphism
\bean
\label{iso C L x}
\Psi_{\A(x),a}^*\  :\  \OLx \to \OCx,
\qquad [p_j]\mapsto \Big[\frac{\der\Phi_{\A,a}}{\der z_j}\Big],
\qquad j\in J.
\eean
In particular, if the weight $a$ is such that $(\A(x),a)$ is unbalanced,
then the number of solutions of system \Ref{eq for L and gamma}, counted with multiplicities,
equals $|U(\A(x))|$ and can be calculated in terms of the matroid associated with the arrangement $\A(x)$, see
 Lemma \ref{lem finite}.

\smallskip

The isomorphism $\Psi_{\A(x),a}^*$  allows us to compare objects associated with $\OCx$ and  objects associated with $\OLx$.
For example, let $q_I,p_{\bar I}$ be an ordered coordinate system on $L_{Y,a}$ like in Lemma \ref{I-coordinates lem new}.
Assume that $x \in \C^n$ is such that $\dim \OLx $ is finite. Consider the Grothendieck residue $\mc R :\OLx\to\C$,
\bean
\label{res on OLx}
[f] \ \mapsto \
\frac{1}{(2\pi i)^n} \int_{\Gamma}
\frac{f\ dp_I\wedge dq_{\bar I}}{\prod_{j=1}^n q_j }\ ,
\eean
where the differentials are ordered as in the ordered coordinate system $q_I,p_{\bar I}$, cf. \Ref{res map}.
Define the nondegenerate bilinear form $(\,,\,)_{L_{Y,a}(x)}$ on $\OLx$ by the formula
\bean
\label{res on OLx}
([f],[g])_{L_{Y,a}(x)} = \frac{(-1)^n}{d_I^2}\,\mc R\Big([f][g]\,{\prod}_{j=1}^n\frac{a_j}{[p_j^2]}\Big).
\eean

\begin{cor}
\label{res res}
Assume that the weight $a$ is generic in the sense of Lemma \ref{lem finite}, then the isomorphism $\Psi_{\A(x),a}^*$ identifies
the form $(\,,\,)_{L_{Y,a}(x)}$ on $\OLx$ and the form $(\,,\,)_{C_{\A(x),a}(x)}$ on $\OCx$.
\end{cor}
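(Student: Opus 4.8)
The plan is to exploit that both forms are Frobenius pairings and thereby reduce the statement to a single identity between residue functionals. Write $\Psi^*$ for $\Psi_{\A(x),a}^*$, and introduce the two linear functionals $\ell_C\colon\OCx\to\C$, $\ell_C([h'])=\sum_{u\in C_{\A(x),a}}\mc R_u([h'])$, and $\ell_L\colon\OLx\to\C$, $\ell_L([h])=\frac{(-1)^n}{d_I^2}\,\mc R\big([h]\prod_{j\in J}a_j/[p_j^2]\big)$, so that $([f],[g])_{C_{\A(x),a}}=\ell_C([f][g])$ and $([f],[g])_{L_{Y,a}(x)}=\ell_L([f][g])$. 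Since $\Psi^*$ is an algebra isomorphism and both algebras are Frobenius, we have $([f],[g])_{L_{Y,a}(x)}=(\Psi^*[f],\Psi^*[g])_{C_{\A(x),a)}}$ for all $[f],[g]$ if and only if $\ell_L([h])=\ell_C(\Psi^*[h])$ for all $[h]\in\OLx$, because $\ell_C(\Psi^*[f]\,\Psi^*[g])=\ell_C(\Psi^*([f][g]))$ and products $[f][g]$ (with $[g]=1$) already exhaust $\OLx$. Thus the whole corollary is reduced to this one functional identity.

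First I would establish the identity for generic $x$, where, by Lemma \ref{lem finite}(iii) together with the genericity of $a$, both the critical fiber $C_{\A(x),a}$ and the Lagrangian fiber $L_{Y,a}(x)$ consist of nondegenerate points. At such a critical point $u$ one has $\mc R_u([f])=f(u)/\Hess_{\A,a}(u)$ by \Ref{res of Hess}, so $\ell_C(\Psi^*[h])=\sum_{u\in C_{\A(x),a}}(\Psi^*h)(u)/\Hess_{\A,a}(u)$; via the diffeomorphism $\Psi_{\A,a}$ of Theorem \ref{thm 1} this is a sum over the points $w\in L_{Y,a}(x)$ with $(\Psi^*h)(u)=h(w)$ and $\Hess_{\A,a}$ regarded as a function on $L_{Y,a}$ as in \Ref{iso C L x} and Section \ref{H J}. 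On the Lagrangian side the matching step is to identify $\mc R$ as the Grothendieck residue of the projection $\pi_{L_{Y,a}}$: at a nondegenerate point $w$ one gets $\mc R([g])=\sum_{w}g(w)/\on{Jac}_I(w)$, since integrating out the coordinate directions $q_I$ collapses the $n$-dimensional residue to the $(n-k)$-dimensional one whose Jacobian is exactly $\on{Jac}_I$, the Jacobian of $\pi_{L_{Y,a}}$ in the coordinates $q_I,p_{\bar I}$.

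Granting this, the two expressions coincide by Corollary \ref{cor hess jac}. Substituting $\Hess_{\A,a}=(-1)^n d_I^2\on{Jac}_I\prod_{j\in J}p_j^2/a_j$ yields
\[
\ell_C(\Psi^*[h])=\sum_{w\in L_{Y,a}(x)}\frac{h(w)}{(-1)^n d_I^2\on{Jac}_I(w)\prod_{j\in J}p_j^2(w)/a_j}
=\frac{(-1)^n}{d_I^2}\sum_{w}\frac{\big(h\prod_{j\in J}a_j/p_j^2\big)(w)}{\on{Jac}_I(w)},
\]
which is exactly $\frac{(-1)^n}{d_I^2}\,\mc R\big([h]\prod_{j\in J}a_j/[p_j^2]\big)=\ell_L([h])$. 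In other words, the normalizing factors $(-1)^n/d_I^2$ and $\prod_{j}a_j/p_j^2$ built into the definition of $(\,,\,)_{L_{Y,a}(x)}$ are precisely those needed to absorb the Hessian-to-Jacobian ratio supplied by Corollary \ref{cor hess jac}. I would then extend the identity from generic $x$ to every $x$ with $\dim\OLx$ finite by continuity: in a basis of marked elements the matrix entries of both forms pulled back by $\Psi^*$ are regular functions of $x$ on $\cd$, and they agree on a dense subset.

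The main obstacle is the middle step, namely pinning down the exact normalization (and sign) of the residue $\mc R$ on $\OLx$, i.e. proving $\mc R([g])=\sum_w g(w)/\on{Jac}_I(w)$. This demands careful bookkeeping of the orientation conventions of the integration cycles in \Ref{res map} and in the definition of $\mc R$, and of the ordering of the differentials $dp_I\wedge dq_{\bar I}$ relative to the coordinate system $q_I,p_{\bar I}$; once this is in place, all the content-bearing cancellation of the powers of $p_j$ and of $d_I^2$ is furnished automatically by Corollary \ref{cor hess jac}.
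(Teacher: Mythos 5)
Your proposal is correct and follows essentially the same route as the paper: the paper's own proof is just the two sentences ``For generic $x\in\C^n$, the corollary follows from Corollary \ref{cor hess jac}. For all $x\in\C^n$, the corollary follows by continuity,'' which is precisely your scheme of diagonalizing both residue forms over the nondegenerate points (critical points weighted by $1/\Hess_{\A,a}$, Lagrangian points weighted by $1/\on{Jac}_I$), cancelling via the identity $\Hess_{\A,a}=(-1)^n d_I^2\on{Jac}_I\prod_j p_j^2/a_j$, and then extending by continuity in $x$. Your write-up merely makes explicit the bookkeeping (the Frobenius reduction and the normalization of $\mc R$ at nondegenerate fiber points) that the paper leaves implicit.
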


\begin{proof}  For generic $x\in\C^n$, the  corollary follows from Corollary \ref{cor hess jac}.
For all $x\in\C^n$, the corollary follows by continuity.
\end{proof}

\begin{rem}
Notice that the form $(\,,\,)_{L_{Y,a}(x)}$ is given by an $n$-dimensional integral while the form
$(\,,\,)_{C_{\A(x),a}(x)}$ is given by a $k$-dimensional integral.
\end{rem}

If $(\A(x),a)$ is unbalanced for some $x\in\cd$, then we have the  vector {\it bundle of algebras}
\bean
\label{algebra bundle L}
\sqcup_{x\in\C^n-\Delta} \OLx \to \C^n-\Delta\,.
\eean
The fiber isomorphism $\Psi_{\A(x),a}^*$ identifies this bundle of algebras with the bundle of algebras in \Ref{algebra bundle}.
The combinatorial and Gauss-Manin connections on the bundle of algebras in \Ref{algebra bundle} induce the corresponding connections
on the bundle in \Ref{algebra bundle L}.

For $x\in\cd$, we
 define the marked elements $p_{i_1,\dots,i_k}$ in $\OLx$ as the images under $\Psi_{\A(x),a}^*$ of the marked elements
$w_{i_1,\dots,i_k}$ in $\OCx$. By formula \Ref{markeD}, we have
\bean
\label{marked p}
p_{i_1,\dots,i_k} = d_{i_1,\dots,i_k}
p_{i_1}\dots p_{i_k}
\eean
with the skew-symmetry property
\bean
\label{skew marked p}
p_{i_{\sigma(1)},\dots,i_{\sigma(k)}} = (-1)^\sigma p_{i_1,\dots,i_k}, \qquad \on{for}\ \sigma \in \Si_k.
\eean
and the marked relations
\bean
\label{rel Ma p}
{\sum}_{j\in J} p_{j,i_2,\dots,i_k} = 0
\eean
labeled by independent subsets  $\{i_2,\dots,i_k\}$. By Corollary \Ref{marked span},
the marked elements $p_{i_1,\dots,i_k}$ span $\OLx$ as a vector space and relations
\Ref{skew marked p},  \Ref{rel Ma p} are the only linear relations between them.
This fact defines an integral structure on $\OLx$.

For a section $I=\sum_{j_1,\dots,j_k} I^{j_1,\dots,j_k} p_{j_1, \dots , j_k}$ of the bundle of algebras \Ref{algebra bundle L}
 and $j\in J$, we define
$\frac{\der I}{\der q_j} = \sum \frac{\der I^{j_1,\dots,j_k}}{\der q_j}p_{j_1, \dots , j_k}$.
This formula defines the combinatorial connection on \Ref{algebra bundle L}.

\begin{thm}
\label{thm Frob on p}
If a  section $I$ of the bundle of algebras \Ref{algebra bundle L} is flat with respect to the Gauss-Manin connection,
then it satisfies the differential equations
\bean
\label{dif eqn Lagr}
\kappa \frac{\der I}{\der q_j}(x) = [p_j]*_x I(x),
\qquad
j\in J,
\eean
where $[p_j]*_x$ is the operator of multiplication by $[p_j]$ in $\OLx$.
\end{thm}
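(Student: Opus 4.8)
The plan is to obtain this statement by transporting its critical-set counterpart, Theorem \ref{thm Frob}, through the fiberwise algebra isomorphism $\Psi_{\A(x),a}^*:\OLx\to\OCx$ of \Ref{iso C L x}. Essentially nothing needs to be computed: the entire argument is the observation that $\Psi_{\A(x),a}^*$ respects all three ingredients of \Ref{dif eqn Lagr} --- the fiberwise multiplication, the combinatorial connection, and the Gauss-Manin connection --- so the differential equation \Ref{dif eqn crit set} for $\OCx$ pulls back verbatim.

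First I would record the multiplicative compatibility. For each $x\in\cd$ the map $\Psi_{\A(x),a}^*$ is an algebra isomorphism sending $[p_j]$ to $\big[\frac{a_j}{f_j}\big]$; hence it conjugates multiplication operators,
\be
\Psi_{\A(x),a}^*\circ\big([p_j]*_x\big)=\Big(\Big[\tfrac{a_j}{f_j}\Big]*_x\Big)\circ\Psi_{\A(x),a}^*,\qquad j\in J.
\ee

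Next I would note that both the combinatorial and the Gauss-Manin connection on the bundle \Ref{algebra bundle L} were, by construction, defined as the pullbacks under $\Psi_{\A(x),a}^*$ of the corresponding connections on \Ref{algebra bundle}. Concretely, the marked elements $p_{i_1,\dots,i_k}$ are by definition the images of the combinatorially flat $w_{i_1,\dots,i_k}$, and the base coordinate $q_j$ is identified with $z_j$ (see \Ref{iso OL C}); comparing the two coordinate formulas for the combinatorial derivative then gives $\Psi_{\A(x),a}^*\big(\frac{\der I}{\der q_j}\big)=\frac{\der}{\der z_j}\big(\Psi_{\A(x),a}^* I\big)$. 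Consequently $\Psi_{\A(x),a}^*$ carries Gauss-Manin flat sections of \Ref{algebra bundle L} to Gauss-Manin flat sections of \Ref{algebra bundle}.

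Finally I would assemble the pieces. Given a Gauss-Manin flat section $I$ of \Ref{algebra bundle L}, the section $\Psi_{\A(x),a}^* I$ is Gauss-Manin flat in \Ref{algebra bundle}, so Theorem \ref{thm Frob} yields $\kappa\,\frac{\der}{\der z_j}\big(\Psi_{\A(x),a}^* I\big)=\big[\frac{a_j}{f_j}\big]*_x\big(\Psi_{\A(x),a}^* I\big)$. Applying $(\Psi_{\A(x),a}^*)^{-1}$ and invoking the two compatibility identities above turns this into $\kappa\,\frac{\der I}{\der q_j}(x)=[p_j]*_x I(x)$, which is \Ref{dif eqn Lagr}. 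The only point requiring care is the intertwining of the combinatorial derivatives; but since both connections are built over the common base $\cd$ from the matched marked bases, this holds by construction and presents no analytic obstacle.
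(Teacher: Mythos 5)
Your proposal is correct and is exactly the argument the paper intends: Theorem \ref{thm Frob on p} appears in the section ``Corollaries of Theorem \ref{thm 1}'' with no separate proof, precisely because the Gauss-Manin and combinatorial connections and the marked elements on the bundle \Ref{algebra bundle L} are all defined by transport under $\Psi_{\A(x),a}^*$, so Theorem \ref{thm Frob} pulls back verbatim, with $[p_j]\mapsto\big[\tfrac{a_j}{f_j}\big]$ and $q_j$ matched to $z_j$ as in \Ref{iso OL C}. Your write-up just makes explicit the three compatibilities the paper leaves implicit.
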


\begin{rem}
Notice that solutions of these differential equations are given by the multidimensional
hypergeometric integrals as in \Ref{Ig w}.
Notice also the action of $[p_j]*_x$ on the marked elements $p_{i_{1},\dots,i_k}$ can be identified with action
on the marked elements $w_{i_{1},\dots,i_k}$ of the operator $K_j(x)$ from \Ref{K_j on w}.
\end{rem}

\subsection{Real solutions}
\label{Real solutions}

Assume that for any $j\in J$, the subspace $Y^\perp$ does not lie in the hyperplane $p_j=0$.
Assume that all coordinates of the weight $a\in(\C^\times)^n$ are positive.
Assume that all entries of the matrix $(b^i_j)_{i=1,\dots,k\atop j=1,\dots,n}$, defining $Y\subset\C^n$ in \Ref{def Y}, are real.
Assume that the critical set $C_{\A,a}$ is nonempty.

\begin{cor}
\label{RE}
Under these assumptions, if $x\in\R^n\subset\C^n$, then
all solutions of system  \Ref{eq for L and gamma}
are real and nondegenerate, and the number of solutions equals $|\chi(U(\A(x))|$.

\end{cor}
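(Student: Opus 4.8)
The plan is to transport the statement back to the theory of critical points of the real master function through the diffeomorphism of Theorem \ref{thm 1}. Fix $x\in\R^n$. The restriction of $\Psi_{\A,a}$ commutes with the projection to $\C^n$, so it identifies the fiber $C_{\A(x),a}$ with the fiber $C_{Y,a}(x)$: a critical point $u\in C_{\A(x),a}$ corresponds to the solution $p=(a_1/f_1(x,u),\dots,a_n/f_n(x,u))$ of system \Ref{eq for L and gamma}, and every solution of \Ref{eq for L and gamma} arises this way. Thus it suffices to show that, under the present hypotheses, all critical points of the master function $\Phi_{\A(x),a}=\sum_{j\in J}a_j\log f_j$ on $U(\A(x))$ are real and nondegenerate and that their number is $|\chi(U(\A(x)))|$.

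For the reality and the count I would invoke the classical theory of real weighted arrangements. Since all $b^i_j$ and all $x_j$ are real, the arrangement $\A(x)$ is real, and since the weights $a_j$ are positive, $(\A(x),a)$ is unbalanced. On each chamber of the real complement $\R^k-\cup_j H_j(x)$ the function $\sum_j a_j\log|f_j|$ is strictly concave: its Hessian equals $-\sum_{j}(a_j/f_j^2)\,b_j b_j^{\,T}$ with $b_j=(b^1_j,\dots,b^k_j)$, which is negative definite because $a_j/f_j^2>0$ at a real point of $U(\A(x))$ and the vectors $b_j$ span $\R^k$. On a bounded chamber the function tends to $-\infty$ at every wall, so it attains a unique interior maximum, a nondegenerate critical point; there is exactly one in each bounded chamber, and the number of bounded chambers equals $|\chi(U(\A(x)))|$, see \cite{V2}. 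This produces $|\chi(U(\A(x)))|$ distinct real nondegenerate critical points. On the other hand, since $(\A(x),a)$ is unbalanced, Lemma \ref{lem crit 1} shows that the total number of complex critical points, counted with Milnor numbers, is also $|\chi(U(\A(x)))|$. Hence the real points exhaust the critical set: all critical points are real and nondegenerate and there are exactly $|\chi(U(\A(x)))|$ of them.

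It then remains to transfer nondegeneracy to the solutions of \Ref{eq for L and gamma}. A real critical point $u$ gives real $p_j=a_j/f_j(x,u)$, so every solution of \Ref{eq for L and gamma} is real, and there are $|\chi(U(\A(x)))|$ of them. A solution $p$ is nondegenerate precisely when the Jacobian $\on{Jac}_I$ of $\pi_{L_{Y,a}}$ does not vanish at $p$; by Corollary \ref{cor hess jac} one has $\Hess_{\A,a}=(-1)^n d_I^2\,\on{Jac}_I\,\prod_{j\in J}p_j^2/a_j$, and on $L_{Y,a}$ the factors $d_I^2$ and $\prod_{j}p_j^2/a_j$ are nonzero, so $\on{Jac}_I\neq0$ at $p$ if and only if $\Hess_{\A,a}(u)\neq0$, i.e. if and only if $u$ is nondegenerate. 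Since every critical point is nondegenerate, every solution of \Ref{eq for L and gamma} is nondegenerate.

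The main obstacle I anticipate is the second paragraph: making precise the passage from ``one nondegenerate maximum per bounded chamber'' to ``these are all the complex critical points.'' The convexity argument cleanly yields the real critical points and their nondegeneracy, but identifying the number of bounded chambers with $|\chi(U(\A(x)))|$ and matching it against the unbalanced complex count—so as to exclude stray complex critical points, and to handle values $x\in\Delta$ where $\A(x)$ is no longer a normal crossing arrangement—is where one must be careful and lean on the cited results.
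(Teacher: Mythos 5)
Your proof is correct and follows essentially the same route as the paper: the paper's entire proof is the reduction via Theorem \ref{thm 1} to the statement that for a real arrangement with positive weights all critical points of the master function are real and nondegenerate and their number is $|\chi(U(\A(x)))|$, which it obtains by citing \cite{V3}. Your second paragraph simply reproves that cited result inline (by the standard strict-concavity-on-bounded-chambers argument combined with the complex count of Lemma \ref{lem crit 1}), and your use of Corollary \ref{cor hess jac} makes explicit the transfer of nondegeneracy from critical points to solutions of the system, which the paper leaves implicit in the phrase ``follows from Theorem \ref{thm 1}.''
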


\begin{proof}
If $a\in (\R_{>0})^n$, $x\in\R^n$, and $(b^i_j)$ are real, then all points of the critical set $C_{\A(x),a}$ are real, nondegenerate, and
 the number of points equals $|\chi(U(\A(x))|$, see \cite{V3}. Now the corollary follows from Theorem \ref{thm 1}.
\end{proof}

The reality property in Corollary \ref{RE} is similar to the reality property of Schubert calculus, see \cite{MTV2, MTV3, So}.

\section{Characteristic variety of the Gauss-Manin differential equations}
\label{sec last}

Consider the Gauss-Manin differential equations $\kappa \frac{\der I}{\der z_j} = K_jI$
 in \Ref{dif eqn}.   Define the characteristic
variety of the $\kappa$-dependent $D$-module associated with the Gauss-Manin differential equations  as
\bean
\label{spec}
\phantom{aa}
\qquad
\Spect_{\A,a} = \{(x,y)\in (\cd)\times (\C^n)^*\ |\ \exists
v\in \sing_aV\ \text{with}\ K_j(x)v = y_j v,\ j\in J\}.
\eean
Let $\pi_{\Spect_{\A,a}} : \Spect_{\A,a} \to \C^n$ be the projection to $\C^n$.

Recall the Lagrangian variety $L_{Y,a}\subset \C^n\times\Cs$ introduced  in Section \ref{An arrangement second part}
and  the projection $\pi_{L_{Y,a}} : L_{Y,a}\to\C^n$.

\begin{thm}
\label{char var thm}
Assume that the weight $a$ is generic in the sense of Lemma \ref{lem finite}, then
$\Spect_{\A,a}=  \pi_{L_{Y,a}}^{-1}(\cd)$.
\end{thm}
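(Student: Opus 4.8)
The plan is to identify the characteristic variety $\Spect_{\A,a}$ with the restriction of the Lagrangian variety $L_{Y,a}$ over $\cd$ by going through the fiberwise algebra isomorphisms established earlier. The central observation is that both sides are determined by the spectral data of the commuting operators $K_j(x)$ acting on $\sing_a V$, and Corollary~\ref{cor *_x K_j} together with Theorem~\ref{K/f} tells us that, under the canonical isomorphism $\mc E(x) : \OCx \to \sing_a V$, the operator $K_j(x)$ is conjugate to multiplication by $[a_j/f_j]$ in $\OCx$. Since $a$ is generic in the sense of Lemma~\ref{lem finite}, every fiber $\OCx$ is a finite-dimensional commutative algebra whose maximal ideals correspond to the critical points in $C_{\A(x),a}$. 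So the spectral analysis of $K_j(x)$ reduces to understanding joint eigenvalues of the multiplication operators $[a_j/f_j]*_x$.

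First I would fix $x\in\cd$ and unwind the definition \Ref{spec}: a point $(x,y)$ lies in $\Spect_{\A,a}$ precisely when there is a nonzero $v\in\sing_a V$ with $K_j(x)v=y_jv$ for all $j$. Transporting along $\mc E(x)$, this is equivalent to the existence of a nonzero $[g]\in\OCx$ that is a joint eigenvector of all the multiplication operators $[a_j/f_j]*_x$ with eigenvalues $y_j$. For a finite-dimensional commutative algebra, joint eigenvectors of the regular representation correspond exactly to algebra homomorphisms $\OCx\to\C$, equivalently to points of the critical set $C_{\A(x),a}$; at a point $(x,u)\in C_{\A,a}$ the joint eigenvalue of $[a_j/f_j]*_x$ is the evaluation $a_j/f_j(x,u)$. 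Hence $(x,y)\in\Spect_{\A,a}$ if and only if there is $(x,u)\in C_{\A,a}$ with $y_j=a_j/f_j(x,u)$ for all $j\in J$, i.e. $y=y(x,u)$ in the notation \Ref{iso map}.

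Next I would invoke Theorem~\ref{thm 1}: the map $\Psi_{\A,a}$ restricts to a diffeomorphism $C_{\A,a}\to L_{Y,a}$ sending $(x,u)\mapsto (x,y(x,u))$, and this map commutes with the projections to $\C^n$. Therefore the set of $(x,y)$ arising as $(x,y(x,u))$ for $(x,u)\in C_{\A,a}$ with the first coordinate in $\cd$ is exactly $\pi_{L_{Y,a}}^{-1}(\cd)$. Combining this with the spectral reformulation of the previous paragraph yields $\Spect_{\A,a}=\pi_{L_{Y,a}}^{-1}(\cd)$, which is the claim. One should check that the genericity hypothesis on $a$ guarantees $C_{\A(x),a}$ is nonempty for $x\in\cd$ (via Lemma~\ref{lem finite}) so that both sides project onto the same base, and that $L_{Y,a}$ is genuinely cut out over $\cd$ rather than only over a dense open subset.

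The main obstacle I anticipate is the step identifying joint eigenvectors of the commuting family $\{[a_j/f_j]*_x\}$ with points of $C_{\A(x),a}$ when the fiber algebra is not semisimple, that is, when some critical points are degenerate and carry nontrivial local algebras $\OCx_u$. In that case a joint eigenvector still exists at each point of the support (the socle of each local algebra provides a joint eigenvector), so the \emph{set} of joint eigenvalues is still the set of evaluations $a_j/f_j(x,u)$; but one must argue carefully that no \emph{extra} eigenvalues appear and that the characteristic variety, defined as a set, does not see multiplicities. Since the theorem is only an equality of varieties (sets), this should go through by decomposing $\OCx=\oplus_u\OCx_u$ and noting each local factor contributes the single joint eigenvalue given by evaluation at $u$; the genericity of $a$ controls the geometry via Lemma~\ref{lem finite}(i)--(ii). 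I would present this decomposition explicitly rather than assume semisimplicity.
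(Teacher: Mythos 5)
Your proof is correct, but it follows a genuinely different route from the paper's. The paper argues only over \emph{generic} $x\in\cd$: there all critical points are nondegenerate (Lemma \ref{lem finite}(iii)), so the special vectors $\on{F}(u)$, $u\in C_{\A(x),a}$, form an eigenbasis of $\on{Sing}_a V$ for the commuting operators $K_j(x)$ (Theorem \ref{first theorem}), the joint spectrum is then read off from Theorems \ref{thm alpha} and \ref{K/f}, and the resulting fiber equality $\pi^{-1}_{\Spect_{\A,a}}(x)=\pi^{-1}_{L_{Y,a}}(x)$ is extended to \emph{all} $x\in\cd$ by continuity. You instead work directly at an arbitrary $x\in\cd$: you transport $K_j(x)$ through $\mc E(x)$ to the multiplication operators $[a_j/f_j]*_x$ (Theorem \ref{K/f}), decompose $\OCx=\oplus_u\,\OCx_u$ into local algebras, and use that multiplication by $[a_j/f_j]-(a_j/f_j)(u)$ is nilpotent on each local factor, so that the socle supplies a joint eigenvector with eigenvalues $(a_j/f_j)(u)$ and no other joint eigenvalues can occur; Theorem \ref{thm 1} then matches these evaluation tuples with $\pi^{-1}_{L_{Y,a}}(x)$. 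What your route buys is uniformity: it needs no limiting argument, whereas the paper's continuity step tacitly uses that both sides are closed over $\cd$ and that fibers of the irreducible variety $L_{Y,a}$ over special $x$ are limits of fibers over generic $x$. What the paper's route buys is brevity, since over generic $x$ the fiber algebra is semisimple and the spectral analysis is immediate. Both arguments rest on the same external inputs (Theorems \ref{thm alpha}, \ref{K/f}, \ref{thm 1}) and both tacitly use that genericity of $a$ in the sense of Lemma \ref{lem finite} entails unbalancedness of $(\A(x),a)$, which those theorems require; your closing remark about nonemptiness of $C_{\A(x),a}$ correctly flags the remaining hypothesis of Theorem \ref{thm 1}.
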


\begin{proof}
For generic $x\in\cd$, the special vectors $(\on{F}(u))_{u\in C_{\A,a}}$ form a basis of $\Sing_aV$ by Theorem \ref{first theorem}.
This gives $\pi^{-1}_{\Spect_{\A,a}}(x) = \pi_{L_{Y,a}}^{-1}(x)$ by  Theorems \ref{thm alpha} and \ref{K/f}.
We get the equality  $\pi^{-1}_{\Spect_{\A,a}}(x) = \pi_{L_{Y,a}}^{-1}(x)$ for  all  $x\in\cd$ by continuity.
\end{proof}

Theorem \ref{char var thm} is proved in \cite{V6} if  $\A$ is generic.

\bigskip

\end{document}